\documentclass[aos]{imsart}

\RequirePackage{amsthm,amsmath,amsfonts,amssymb,mathtools,bm}
\RequirePackage[authoryear,round]{natbib}
\RequirePackage[colorlinks,citecolor=blue,linkcolor=blue,urlcolor=blue]{hyperref}
\RequirePackage{microtype}

\startlocaldefs
\numberwithin{equation}{section}
\theoremstyle{plain}
\newtheorem{theorem}{Theorem}[section]
\newtheorem{proposition}[theorem]{Proposition}
\newtheorem{corollary}[theorem]{Corollary}
\newtheorem{lemma}[theorem]{Lemma}
\theoremstyle{definition}
\newtheorem{assumption}[theorem]{Assumption}

\newtheorem{example}[theorem]{Example}
\newtheorem{remark}[theorem]{Remark}

\newcommand{\E}{\mathbb E}
\newcommand{\Pp}{\mathbb P}
\newcommand{\R}{\mathbb R}
\newcommand{\cA}{\mathcal A}
\newcommand{\cE}{\mathcal E}
\newcommand{\cF}{\mathcal F}
\newcommand{\cG}{\mathcal G}
\newcommand{\cK}{\mathcal K}
\newcommand{\cM}{\mathcal M}
\newcommand{\cP}{\mathcal P}
\newcommand{\cQ}{\mathcal Q}
\newcommand{\cR}{\mathcal R}

\newcommand{\cY}{\mathcal Y}
\newcommand{\dd}{\mathop{}\!\mathrm d}
\newcommand{\Var}{\operatorname{Var}}
\newcommand{\Cov}{\operatorname{Cov}}
\newcommand{\osc}{\operatorname{osc}}

\newcommand{\ind}{\mathbf 1}
\newcommand{\eps}{\varepsilon}

\endlocaldefs

\begin{document}

\begin{frontmatter}
\pdfsubject{Preprint manuscript}
\title{Pre-Disclosure Experiment Menus: Oracle-Relative Risk and Joint Sample--Menu Asymptotics}
\runtitle{Pre-Disclosure Experiment Menus}

\begin{aug}
\author[A]{\fnms{Xinyu}~\snm{Song}\ead[label=e1]{song.xinyu@mail.shufe.edu.cn}}
\address[A]{School of Statistics and Data Science, Shanghai University of Finance and Economics\printead[presep={,\ }]{e1}}
\end{aug}

\begin{abstract}
We study a resolution problem in local asymptotic decision theory: individual
risks may admit Gaussian approximations that do not determine their vanishing
difference.  A finite menu of experiments is installed before context
disclosure, although observations may be routed adaptively afterward.  A
greatest-element Blackwell order collapses adaptive routing to the best
installed experiment and reduces the fixed-menu excess to an
inverse-information distortion with frontier \(A_k\).  We develop
differentiated, all-prior posterior transfer along a one-dimensional
degradation chain and establish
\[
 F_{n,k_n}(H_n)=A_{k_n}\{1+o(1)\}
\]
for every diverging menu sequence with positive frontier and every admissible
localization radius, without an additional direct sample-menu restriction.
The transfer is exact under Gaussian degradation.  Prior-free
likelihood-generator conditions imply it for jump generators and are verified
for binary attenuation, Poisson thinning, and negative-binomial thinning.
If the distortion is uniformly quadratic on an Ahlfors-regular oracle image
of dimension \(r\), then \(A_k\asymp k^{-2/r}\), and the original-scale excess
mean squared error is of order \(n^{-1}k^{-2/r}\).  Calibrated Poisson sensor
and radial-qubit measurement menus illustrate the result.  A triangular
Gaussian counterexample shows why pointwise Gaussian convergence is
insufficient.
\end{abstract}

\begin{keyword}[class=MSC]
\kwdgroup[type=primary]{\kwd{62B15}\kwd{62F12}}
\kwdgroup[type=secondary]{\kwd{62K05}\kwd{62C20}}
\end{keyword}

\begin{keyword}
\kwd{comparison of experiments}
\kwd{local asymptotic minimax risk}
\kwd{Blackwell order}
\kwd{finite adaptability}
\kwd{posterior variance}
\kwd{quantum measurement design}
\end{keyword}
\end{frontmatter}

\section{Introduction}
\label{sec:introduction}

We study a resolution problem in local asymptotic decision theory.  Ordinary
local asymptotic approximations describe each rescaled risk at order one, but
they do not in general determine a difference between two risks that tends to
zero.  This issue arises in experiment design when the available experiments
must be selected before the statistical task is revealed.  A task-specific
oracle may choose its experiment after disclosure, whereas a shared menu of at
most \(k\) experiments cannot.  For fixed \(k\), local asymptotic minimax
theory identifies the resulting oracle gap.  When \(k=k_n\to\infty\), however,
that gap vanishes and may fall below the resolution of an ordinary LAN
approximation.  The purpose of this paper is to recover the risk difference at
its own shrinking scale.

More precisely, a menu \(V\subset\cA\), with \(1\leq |V|\leq k\), is chosen
before a context \(q\in\cQ\) is disclosed.  Conditional on \(q\), a policy may
route each of \(n\) observations among the experiments in \(V\), using the
previous actions and outcomes, but it cannot add a new experiment.  The
benchmark may choose the best action in the full space \(\cA\) after observing
\(q\).  We compare the worst-context minimax risks of the installed menu and
this context-specific oracle.  The sample size and the menu size have different
roles: \(n\) controls estimation under a selected experiment, whereas \(k\)
controls how closely the installed menu approximates the oracle experiment.
The formulation covers calibrated sensor libraries, quantum measurement
settings, and diagnostic protocols that must be approved before the target
population is identified.

The timing is related to finite adaptability in robust optimization
\citep{BertsimasCaramanis2010,SubramanyamEtAl2020}.  Geometric properties and
performance guarantees for finitely adaptable policies are studied by
\citet{BertsimasGoyalSun2011}.  More recent work separates several aspects of
the approximation: \citet{Kurtz2026} bounds the number of policies required for
specified guarantees, \citet{KedadSidhoumMedvedevMeunier2023} establishes
asymptotic optimality as the number of policies increases under a corrected
continuity condition, and \citet{RezaeiWeiHan2026} studies convergence of
polyhedral policy classes.  These problems optimize robust recourse decisions.
In the present setting, the installed actions generate probability laws, and
their relative value is determined by a repeated-sampling decision problem.

Related statistical-design problems impose different restrictions.
Standardized maximin and robust optimal design compare one common design with
parameter-specific optima \citep{BraessDette2007}, while a menu permits
post-disclosure routing to its best member.  Efficient allocation and optimal
experimentation with the full action set available are studied by
\citet{Armstrong2022}, \citet{Adusumilli2025}, and \citet{LiZhao2025}.
Sequential and adaptive limit experiments are developed by
\citet{HiranoPorter2023} and \citet{Adusumilli2026}.  These procedures choose
among actions during experimentation; they do not restrict the action set by a
finite catalog installed before the context is observed.

The high-resolution geometry is connected with quantization.  The exponent
\(2/r\) agrees with the standard squared-distortion quantization rate in
\(r\) dimensions \citep{GrafLuschgy2000}; scalar quantization for estimation is
considered by \citet{FariasBrossier2014}.  In information economics,
\citet{BergemannBonattiSmolin2018} and \citet{BergemannYehZhang2021} study the
design and pricing of finite information.  Separately, post-measurement
information in quantum decision problems is studied by
\citet{BallesterEtAl2008}, \citet{GopalWehner2010}, and
\citet{CarmeliEtAl2018}.  In a strategic testing problem,
\citet{Hancart2026} lets a privately informed agent select from an offered menu,
so the choice itself conveys information.  Here the context is observed by the
platform, routing is nonstrategic, and the objective is oracle-relative local
minimax risk under repeated sampling.

The first step is an exact reduction.  If the one-observation experiments in
a menu have a greatest element in Blackwell order after the context is fixed,
then repeated use of that element dominates the full record generated by any
randomized and history-adaptive menu policy.  This is a pathwise consequence
of comparison of experiments \citep{Blackwell1953,DeGroot1962}, and it holds
for every sample size, parameter space, and loss.  It fails when the installed
experiments are incomparable, because mixed allocations may then contain
information unavailable from repetition of any one action.

The reduction identifies the fixed-menu excess risk at an interior parameter
value.  Let \(I(q,a)\) denote one-observation Fisher information and let
\(b(q)\) be the derivative of the context-dependent target.  If
\(a_\star(q)\) is the oracle action, the relevant distortion is
\[
 D(q,a)=b(q)^2\{I(q,a)^{-1}-I(q,a_\star(q))^{-1}\},
\]
with the zero-information and locally flat cases defined separately below.
The optimized frontier is
\[
 A_k=\inf_{1\leq |V|\leq k}\sup_{q\in\cQ}\min_{a\in V}D(q,a).
\]
After statistically equivalent actions are identified through a common task
quotient, the Hessian of \(D\) at the oracle determines the local menu
geometry.  A uniformly quadratic distortion on an Ahlfors-regular oracle
image of dimension \(r\) gives \(A_k\asymp k^{-2/r}\), in agreement with the
standard squared-distortion quantization exponent.  On the original target
scale, the corresponding excess mean squared error is of order
\(n^{-1}k^{-2/r}\).  This fixed-menu calculation does not by itself determine
the finite-sample excess when \(A_{k_n}\to0\).

To obtain a relative approximation at the shrinking scale, we parametrize an
ordered family by inverse information \(v\), with larger \(v\) representing a
Blackwell degradation, and differentiate Bayes risk along this chain.  A
nearly least-favorable prior in the Gaussian experiment must have posterior
variance close to the noise variance.  A uniform comparison over all priors
transfers this property to the original experiment, after which priors from
the two endpoints bound the minimax increment.  Under the stated uniformity
and smooth-target conditions, the resulting joint law is
\[
 F_{n,k_n}(H_n)=A_{k_n}\{1+o(1)\}.
\]
This holds for every diverging menu sequence with positive frontier and an
admissible localization radius; the theorem imposes no additional direct
relation between \(n\) and \(k_n\).  The radius may depend on the frontier, so
the statement is not a claim for every prespecified diverging neighborhood.

The all-prior comparison is derived from prior-free conditions on the sampling
laws.  They consist of a multiplicative local Gaussian likelihood comparison,
quadrature and tail bounds, and edge-score calibration for a parameter-free
degradation generator.  Prior mixing preserves the likelihood comparison, and
the calibrated generator converts the posterior Dirichlet form into its
Gaussian counterpart.  Related semigroup energy identities have been studied
for predictive Kullback--Leibler regret by \citet{TakanashiMcAlinn2026}; here
they are used to compare squared-error Bayes-risk derivatives and a shrinking
minimax increment.  A triangular Gaussian counterexample shows that pointwise
Gaussian convergence, even at every fixed degradation level, is insufficient
for this purpose.

The transfer is exact for Gaussian degradation.  We verify the primitive
conditions for binary attenuation, Poisson thinning, and negative-binomial
thinning; the last model provides an overdispersed count experiment
\citep{ZhuJoe2010}.  Two applications make the menu calculation explicit.  A
calibrated Poisson sensor catalog gives an exact squared-distance frontier on
a general compact action space.  Radial qubit measurements give a
non-Euclidean binary example in which the action space is projective.  For the
equatorial qubit model we also prove a separate expansion at the prespecified
radius \(H_n=n^{1/6}\), with an explicit absolute remainder.

The assumptions serve distinct purposes.  The greatest-element order reduces
adaptive routing, the likelihood-generator conditions resolve the shrinking
risk difference, and the quadratic/Ahlfors conditions determine the rate of
the fixed-menu frontier.  We do not treat incomparable vector experiments,
for which mixed allocations may be necessary, or claim that ordinary DQM or
LAN implies differentiated all-prior transfer.  Sharp covering constants are
derived only in the full-Bloch qubit specialization.

The rest of the paper is organized as follows.  Section~\ref{sec:contract}
defines the menu experiment and proves the adaptive collapse.
Section~\ref{sec:fixed-menu} derives the fixed-menu local frontier and its
high-resolution geometry.  Section~\ref{sec:differential} develops the
differential minimax transfer and the joint sample-menu theorem.
Section~\ref{sec:models} verifies the assumptions in four ordered models and
gives the Poisson sensor and radial-qubit applications.
Section~\ref{sec:discussion}
discusses the statistical boundary of the result.  Technical posterior
comparisons and the primitive transfer proof are given in Sections S3--S6 of
the Supplementary Material.

\section{The pre-disclosure menu experiment}
\label{sec:contract}

Let \(\Theta\subseteq\R\) be a parameter space, let \(\cQ\) be a context
space, and let \(\cA\) be a space of experiment actions.  For each
\((q,a)\in\cQ\times\cA\), one observation is drawn from the statistical
experiment
\[
 \cE_{q,a}=(\cY_{q,a},\cF_{q,a},
             \{P_{\theta,q,a}:\theta\in\Theta\}).
\]
For a fixed nonempty finite menu \(V\), place the labeled observations in the
measurable disjoint union
\[
 \mathsf Y_{q,V}=\bigsqcup_{a\in V}(\{a\}\times\cY_{q,a}),
 \qquad
 \mathsf F_{q,V}
 =\left\{\bigsqcup_{a\in V}(\{a\}\times B_a):
          B_a\in\cF_{q,a}\right\}.
\]
The complete \(n\)-stage record is an element of
\((\mathsf Y_{q,V}^n,\mathsf F_{q,V}^{\otimes n})\), and a stage-\(i\) policy
is a Markov kernel from the preceding record to the finite set \(V\).
A nonempty finite menu \(V\subset\cA\) is installed before \(q\) is
disclosed.  At stage \(i\), after observing \(q\) and the recorded history
\((A_j,Y_j)_{j<i}\), a policy draws \(A_i\in V\) from a parameter-free
kernel and then observes \(Y_i\sim P_{\theta,q,A_i}\).  External
randomization is allowed, and the action labels are retained in the record.
The menu is shared across contexts, but the policy and terminal decision may
depend on the disclosed context.

For two experiments \(\cE\) and \(\cG\) with the same parameter space, write
\(\cE\succeq_B\cG\) if \(\cG\) is obtained from \(\cE\) through a Markov
kernel that does not depend on \(\theta\).  This is the Blackwell order.

\begin{theorem}[Adaptive collapse]
\label{thm:collapse}
Fix \(q\), a finite menu \(V\), and an action \(a_V\in V\).  The following
statements are equivalent:
\begin{enumerate}
\item \(\cE_{q,a_V}\succeq_B\cE_{q,a}\) for every \(a\in V\);
\item for every \(n\geq1\), \(\cE_{q,a_V}^{\otimes n}\) Blackwell-dominates
the complete action-outcome record generated by every randomized and
history-adaptive policy using \(V\);
\item for \(n=1\), \(\cE_{q,a_V}\) Blackwell-dominates the record generated
by every deterministic policy using one action in \(V\).
\end{enumerate}
Consequently, all menu policies reduce to repeated use of one action for every
decision problem if and only if the one-observation menu has a greatest
element.
\end{theorem}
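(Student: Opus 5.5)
The plan is to prove the cycle (1)\(\Rightarrow\)(2)\(\Rightarrow\)(3)\(\Rightarrow\)(1), and then read off the ``consequently'' clause from Blackwell's theorem \citep{Blackwell1953}. That theorem says that \(\cE\succeq_B\cG\) if and only if every risk function attainable in \(\cG\) is matched or improved in \(\cE\), for every decision problem.

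For (1)\(\Rightarrow\)(2), I will build an explicit parameter-free kernel from \(\cE_{q,a_V}^{\otimes n}\) to the complete record space \((\mathsf Y_{q,V}^n,\mathsf F_{q,V}^{\otimes n})\). By (1), for each \(a\in V\) there is a Markov kernel \(K_a\) from \(\cY_{q,a_V}\) to \(\cY_{q,a}\), not depending on \(\theta\), with \(P_{\theta,q,a}=K_aP_{\theta,q,a_V}\).

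Given i.i.d.\ \(Z_1,\dots,Z_n\sim P_{\theta,q,a_V}\), the simulation proceeds sequentially. At stage \(i\), draw \(A_i\) from the policy kernel applied to the simulated history \((A_j,Y_j)_{j<i}\), using only external randomization. Then draw \(Y_i\sim K_{A_i}(Z_i,\cdot)\).

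The key point is that \(Z_i\) is independent of \(Z_{<i}\), and the history up to stage \(i-1\) together with \(A_i\) is a function of \(Z_{<i}\) and independent noise. Hence, conditionally on the history and on \(A_i\), the law of \(Y_i\) is \(K_{A_i}P_{\theta,q,a_V}=P_{\theta,q,A_i}\). An induction on \(i\), using the Ionescu--Tulcea construction of the record law, then shows that the simulated record has exactly the law induced by the policy under \(\theta\).

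The composite map \(z\mapsto\) record is a Markov kernel, because it is a finite composition of kernels. It is also free of \(\theta\), since neither the policy nor the \(K_a\) involve \(\theta\). Measurability in the disjoint-union \(\sigma\)-field should be routine: a kernel into \(\mathsf Y_{q,V}\) is specified by a kernel into \(V\) followed by a kernel into each \(\cY_{q,a}\).

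Step (2)\(\Rightarrow\)(3) is immediate: set \(n=1\) and take the deterministic policy \(A_1=a\).

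For (3)\(\Rightarrow\)(1), the record produced by the policy \(A_1\equiv a\) is \((a,Y_1)\), which lives in \(\{a\}\times\cY_{q,a}\subset\mathsf Y_{q,V}\). The projection onto the second coordinate is a measurable, \(\theta\)-free map. Composing it with the dominating kernel therefore exhibits \(\cE_{q,a}\) as a garbling of \(\cE_{q,a_V}\).

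For the final clause, consider first the case where a greatest element exists. Then (2) together with Blackwell's theorem shows that, for every loss and every policy, the risk of the policy plus its terminal rule is matched by some rule based on \(n\) repetitions of \(a_V\). Conversely, suppose every menu policy is reducible to repeated use of a single action for all decision problems. Take \(n=1\). If the reduction action could vary with the policy, a further step is needed: I will use the policies that randomize uniformly over \(V\), whose record determines every \(\cE_{q,a}\). Domination of this record by one \(\cE_{q,a^\ast}\), with \(a^\ast\in V\), yields (1) for \(a_V=a^\ast\).

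I expect this last step to be the main obstacle. The reason is that Blackwell's equivalence between ``better for all decision problems'' and garbling requires regularity, namely a finite parameter set or standard Borel spaces with dominated families. I would cite the theorem in its general form (Le Cam's randomization criterion) under the paper's standing assumptions, or else interpret ``reduce'' directly as Blackwell dominance. The kernel construction in (1)\(\Rightarrow\)(2), by contrast, is conceptually straightforward.
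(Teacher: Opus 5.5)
Your proof of the cycle (1)$\Rightarrow$(2)$\Rightarrow$(3)$\Rightarrow$(1) is the paper's argument. You simulate the record from $Z_{1:n}$ by running the policy on the simulated history and garbling $Z_i$ through $K_{A_i}$, restrict to $n=1$ and a constant policy, and delete the deterministic label. The paper reads the final clause as (1)$\Leftrightarrow$(2), with one fixed action $a_V$ serving every policy, and leaves the translation into decision problems to the classical comparison theorem. Under that reading your converse is just (3)$\Rightarrow$(1), and your regularity caveat concerns only that translation.

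The step that fails is your fallback for the reading in which the reducing action may depend on the policy. The record of the uniformly randomizing $n=1$ policy does not determine the individual $\cE_{q,a}$, because it shows the $a$-outcome only on $\{A_1=a\}$. Its domination by some $\cE_{q,a^\ast}$ therefore does not yield (1).

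A counterexample: let $\Theta=\{1,2,3\}$, let $f_1,f_2$ be the two experiments of Example~\ref{ex:incomparable}, and let $o$ be uninformative. Let $e$ draw a label uniformly from $\{f_1,f_2,o\}$ and report it together with that experiment's outcome. For $V=\{e,f_1,f_2,o\}$, the uniform record reveals whether $\theta=1$, whether $\theta=2$, or nothing, each with $\theta$-free probability $1/3$. This is exactly what $\cE_e$ does, so $\cE_e$ dominates the record. Yet no member of $V$ dominates both $\cE_{f_1}$ and $\cE_{f_2}$, so $V$ has no greatest element.

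If you want the policy-dependent reading, mix two inequivalent maximal elements instead. Take $m_1,m_2$ maximal and inequivalent in the finite preorder on $V$; they exist when there is no greatest element, since every element lies below a maximal one. Consider the policies that choose $m_1$ with probability $p$ and $m_2$ otherwise. Writing $r_a$ for the Bayes risk of $\cE_{q,a}$ in a given decision problem, the record of such a policy has Bayes risk $pr_{m_1}+(1-p)r_{m_2}$. Because $V$ is finite, a single action $a$ dominates these records along a sequence $p_k\uparrow1$. Letting $k\to\infty$ gives $r_a\le r_{m_1}$, so $a$ is equivalent to $m_1$ by maximality. Then $r_{m_1}\le p_kr_{m_1}+(1-p_k)r_{m_2}$ with $p_k<1$ gives $r_{m_1}\le r_{m_2}$ in every decision problem, which contradicts incomparability via the comparison theorem. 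Otherwise, simply adopt the paper's fixed-action reading.
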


\begin{proof}
Suppose the first statement holds, and choose a parameter-free garbling
kernel \(K_{a\leftarrow a_V}\) for every \(a\in V\).  Starting from
independent observations \(Z_1,\ldots,Z_n\) from
\(\cE_{q,a_V}\), run the policy recursively.  At stage \(i\), draw \(A_i\)
from its policy kernel using the previously simulated record, then draw
\[
 Y_i\mid(A_i,Z_i)\sim K_{A_i\leftarrow a_V}(\,\cdot\mid Z_i).
\]
This recursion is a Markov kernel from \(Z_{1:n}\) to the complete record,
independent of \(\theta\).  Induction over \(i\) shows that the simulated
record has the policy law under every \(\theta\), proving the second
statement.  The second statement implies the third by restriction.  For the
converse, apply the third statement to the deterministic policy that selects
an arbitrary \(a\in V\), and delete its deterministic action label.
\end{proof}

Theorem~\ref{thm:collapse} is an exact comparison of experiments.  In
particular, it does not rely on Fisher information, unbiasedness, or an
asymptotic approximation.  Its decision-theoretic consequence is classical;
see \citet{DeGroot1962}.  The pathwise construction is included to make clear
that retaining the action labels and allowing history dependence do not
alter the conclusion.

The greatest-element assumption holds in the three chains used below.
For binary attenuation,
\[
 P_\theta^c(Y=y)=\frac{1+yc\theta}{2},\qquad y\in\{-1,1\},
\]
when \(c_\star>0\), an observation with \(0\leq c\leq c_\star\) is obtained
from quality \(c_\star\) by a binary channel with conditional mean
\(c/c_\star\).  If \(c_\star=0\), all available qualities are zero and the
corresponding experiments are identical.
A Gaussian observation with variance \(\sigma_\star^2\) dominates one with
variance \(\sigma^2\geq\sigma_\star^2\) by addition of independent Gaussian
noise.  A Poisson count with mean \(\lambda_\star\theta\) dominates one with
mean \(\lambda\theta\), for \(0\leq\lambda\leq\lambda_\star\), by binomial
thinning.

\begin{example}[Failure without a greatest experiment]
\label{ex:incomparable}
Let \(\Theta=\{1,2,3\}\).  Experiment \(a\) reports whether \(\theta=1\),
and experiment \(b\) reports whether \(\theta=2\).  Neither dominates the
other.  With two observations, using each experiment once identifies
\(\theta\) exactly, whereas repeating either experiment leaves one pair of
parameters indistinguishable.  Under the uniform prior and zero-one loss,
the mixed allocation has Bayes risk zero and either repeated experiment has
Bayes risk \(1/3\).  Thus a menu without a greatest element need not collapse,
even when the context is fixed and no adaptation is used.
\end{example}

Example~\ref{ex:incomparable} also explains why the present scalar-chain
assumption is substantive.  In partially ordered or vector-parameter
problems, useful allocations may combine complementary experiments, and menu
cardinality alone does not reduce the design to a covering problem.

\section{Fixed-menu local risk and induced geometry}
\label{sec:fixed-menu}

Fix an interior point \(\theta_0\).  For each context-action pair, suppose
that the one-observation model is differentiable in quadratic mean at
\(\theta_0\), with Fisher information \(I(q,a)\), and that its local
experiments are LAN.  When \(I(q,a)>0\), assume that there is an estimator
\(\widehat\theta_{n,q,a}\) that is compact-uniformly locally efficient: for
every \(H<\infty\),
\[
 \sup_{|h|\leq H}\left|
 \E_{\theta_0+h/\sqrt n,q,a}
 \left[n\left\{\widehat\theta_{n,q,a}
       -(\theta_0+h/\sqrt n)\right\}^2\right]
 -I(q,a)^{-1}\right|\longrightarrow0.
\]
This condition holds for the usual efficient estimators on compact interior
submodels of the three regular families considered below.  Pointwise
efficiency alone would not suffice for the upper bound.  Let \(\psi_q\) be
differentiable at \(\theta_0\), and write
\[
 b(q)=\dot\psi_q(\theta_0),\qquad
 \psi_{q,n}(h)=\sqrt n\{\psi_q(\theta_0+h/\sqrt n)
                         -\psi_q(\theta_0)\}.
\]
If \(T\) estimates \(\psi_{q,n}(h)\), then
\(\psi_q(\theta_0)+T/\sqrt n\) is the associated estimator on the original
scale, whose mean squared error is \(n^{-1}\) times the risk below.  Thus all
local risk differences in this section are \(n\)-scaled.
For a menu \(V\), define the localized minimax risk
\[
 \cR_{n,H}(V,q)=
 \inf_{\Pi,T}\sup_{|h|\leq H}
 \E_{\theta_0+h/\sqrt n,q}^{\Pi}
 \{T-\psi_{q,n}(h)\}^2,
\]
where the infimum is over admissible menu policies \(\Pi\) and terminal
estimators \(T\).  Put
\[
 L(V,q)=\lim_{H\to\infty}\liminf_{n\to\infty}\cR_{n,H}(V,q).
\]

We assume throughout this section that every finite menu has, at each
context, a measurable greatest action \(a_V(q)\), and that the full action
space has an attained measurable greatest action \(a_\star(q)\).  Under DQM,
Blackwell dominance implies Fisher-information monotonicity because the score
of a garbled experiment is the conditional expectation of the original
score.  Hence
\[
 I_V(q):=I(q,a_V(q))=\max_{a\in V}I(q,a),\qquad
 I_\star(q):=I(q,a_\star(q))=\sup_{a\in\cA}I(q,a).
\]
Set \(L_\star(q)=L(\{a_\star(q)\},q)\).
For the global frontier below, assume \(I_\star(q)>0\) for every
\(q\in\cQ\).

\begin{theorem}[Fixed-menu local frontier]
\label{thm:fixed-menu}
Under the preceding regularity and ordered-menu assumptions, for every
\(q\in\cQ\),
\begin{equation}
 L(V,q)=
 \begin{cases}
 b(q)^2/I_V(q),&b(q)\ne0,\\
 0,&b(q)=0,
 \end{cases}
 \qquad
 L_\star(q)=
 \begin{cases}
 b(q)^2/I_\star(q),&b(q)\ne0,\\
 0,&b(q)=0,
 \end{cases}
 \label{eq:fixed-menu-risk}
\end{equation}
with the convention \(1/0=\infty\) in the nonzero-derivative branch.
Consequently, define
\begin{equation}
 D(q,a)=
 \begin{cases}
 0,&b(q)=0,\\
 b(q)^2\{I(q,a)^{-1}-I_\star(q)^{-1}\},
      &b(q)\ne0,\ I(q,a)>0,\\
 \infty,&b(q)\ne0,\ I(q,a)=0,
 \end{cases}
 \label{eq:stat-distortion}
\end{equation}
the optimized worst-context local excess is
\begin{equation}
 A_k=\inf_{V\subset\cA,\,1\leq |V|\leq k}
       \sup_{q\in\cQ}\min_{a\in V}D(q,a).
 \label{eq:Ak}
\end{equation}
If \(\cQ\) and \(\cA\) are compact and \(D\) is lower semicontinuous, the
infimum is attained.
\end{theorem}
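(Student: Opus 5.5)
The plan has three stages: a reduction via Theorem~\ref{thm:collapse}, a matching lower and upper bound for a single experiment, and a compactness argument for the infimum in \eqref{eq:Ak}.

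\textbf{Reduction to one action.} Fix \(q\). By assumption the menu has a greatest action \(a_V(q)\), so Theorem~\ref{thm:collapse} applies. For every \(n\), the \(n\)-fold product \(\cE_{q,a_V(q)}^{\otimes n}\) Blackwell-dominates the full record of any admissible menu policy \(\Pi\). Hence any pair \((\Pi,T)\) can be simulated from \(n\) repeated draws of \(a_V(q)\) by a parameter-free kernel, and the simulated record has the policy law under every \(\theta\). The risk function \(h\mapsto\E^{\Pi}_{\theta_0+h/\sqrt n,q}\{T-\psi_{q,n}(h)\}^2\) is therefore attained by a randomized estimator based on the repeated experiment. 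Conversely, repeated use of \(a_V(q)\) is itself a menu policy. So \(\cR_{n,H}(V,q)=\cR_{n,H}(\{a_V(q)\},q)\) exactly, for every \(n\) and \(H\), and it suffices to treat a single fixed experiment with information \(I=I_V(q)\). The same argument with \(a_\star(q)\) gives the formula for \(L_\star\).

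\textbf{Single-experiment local minimax risk.} The case \(b(q)=0\) is handled directly. Take \(T\equiv0\). Since \(\psi_{q,n}(h)=\sqrt n\{\psi_q(\theta_0+h/\sqrt n)-\psi_q(\theta_0)\}=o(1)\) uniformly in \(|h|\le H\) by differentiability, we get \(\sup_{|h|\le H}\psi_{q,n}(h)^2\to0\). Thus \(L=0\) regardless of the information.

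Now suppose \(b\ne0\). For the \emph{lower bound} I use the local asymptotic minimax theorem (Hájek--Le Cam) for the LAN sequence. The local experiments converge to \(N(h,I^{-1})\), or to the uninformative experiment if \(I=0\), and the target \(\psi_{q,n}(h)\to bh\) uniformly on compacts. This gives \(\liminf_n\cR_{n,H}\ge\) the minimax risk for estimating \(bh\) over \(|h|\le H\) in the Gaussian limit. That quantity tends to \(b^2/I\) as \(H\to\infty\); one sees this via a uniform prior on \([-H,H]\) and posterior variance. When \(I=0\), the limit experiment is uninformative, so the risk is at least \(b^2H^2\) up to \(o(1)\), which diverges; this matches the convention \(1/0=\infty\). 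Since squared loss is unbounded, the lower bound should be run with truncated loss \(\min\{(\cdot)^2,M\}\) and then \(M\to\infty\). For the \emph{upper bound} (\(I>0\)), take \(T=\sqrt n\{\psi_q(\widehat\theta_{n,q,a})-\psi_q(\theta_0)\}\), or more safely the linearization \(b\sqrt n(\widehat\theta-\theta_0)\). Its risk equals \(b^2\,\E[n(\widehat\theta-\theta)^2]\) plus the squared linearization error \(\{bh-\psi_{q,n}(h)\}^2\to0\) uniformly in \(|h|\le H\). Compact-uniform efficiency then gives \(\limsup_n\sup_{|h|\le H}\le b^2/I\) for every \(H\). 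The two bounds together yield \eqref{eq:fixed-menu-risk}.

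\textbf{Distortion and attainment.} The difference \(L(V,q)-L_\star(q)\) equals \(\min_{a\in V}D(q,a)\), with \(D\) as in \eqref{eq:stat-distortion}. This uses \(I_V=\max_{a\in V}I\) and the fact that \(D\) is decreasing in \(I\); the cases \(b=0\) and \(I=0\) match by convention. So \(A_k\) in \eqref{eq:Ak} is the optimized worst-context excess. For attainment, note that \(V\mapsto\sup_q\min_{a\in V}D(q,a)\) is a function on \(\cA^k\), with repeats allowed so that it covers \(|V|\le k\). A minimum of finitely many lsc functions is lsc, and a supremum of lsc functions is lsc. This function is therefore lsc on the compact set \(\cA^k\) and attains its infimum, which is \(A_k\).

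\textbf{Main obstacle.} The main obstacle is the lower bound. Squared loss is unbounded and the supremum is taken only over \(|h|\le H\) before \(H\to\infty\), so the Gaussian limit must be handled with truncation and a careful order of limits. The case \(I=0\), where the limit experiment is degenerate, also needs separate treatment.
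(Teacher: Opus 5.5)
Your proposal is correct and follows essentially the paper's route: collapse to $a_V(q)$ via Theorem~\ref{thm:collapse}, the constant estimator $T=0$ when $b(q)=0$, the local asymptotic minimax lower bound, the linearized compact-uniformly efficient estimator $b(q)\sqrt n(\widehat\theta-\theta_0)$ for the upper bound, and lower semicontinuity on $\cA^k$ with repeated coordinates for attainment. There are two small corrections. First, the upper-bound risk is not literally $b^2\E[n(\widehat\theta-\theta)^2]$ plus the squared linearization error, because there is a cross term; Cauchy--Schwarz removes it uniformly in $|h|\le H$, as in the paper. Second, for $I_V(q)=0$ the paper does not appeal to an uninformative limit experiment: it uses a direct Hellinger-tensorization and total-variation two-point bound at $h=\pm H$, which is the simplest way to make your ``uninformative limit'' step rigorous.
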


\begin{proof}
If \(b(q)=0\), differentiability gives
\[
 \sup_{|h|\leq H}|\psi_{q,n}(h)|=o(1)
\]
for every fixed \(H\).  The constant estimator \(T=0\) therefore gives
\(L(V,q)=L_\star(q)=0\), irrespective of the available information.  This
also proves the zero branch of \eqref{eq:stat-distortion}.  Hence suppose
\(b(q)\ne0\).

Theorem~\ref{thm:collapse} makes every menu policy a garbling of
\(\cE_{q,a_V(q)}^{\otimes n}\), while repetition of \(a_V(q)\) is admissible.
The scalar local asymptotic minimax theorem gives the lower bound in
\eqref{eq:fixed-menu-risk} \citep[Chapter~8]{vanDerVaart1998}.  For the upper
bound, suppose \(I_V(q)>0\), use the compact-uniformly efficient estimator for
\(a_V(q)\), and set
\[
 T_n=b(q)\sqrt n\{\widehat\theta_{n,q,a_V(q)}-\theta_0\}.
\]
Differentiability gives
\[
 \sup_{|h|\leq H}|\psi_{q,n}(h)-b(q)h|=o(1)
\]
for every fixed \(H\).  Compact-uniform local efficiency and
Cauchy--Schwarz therefore yield
\[
 \sup_{|h|\leq H}
 \E_{\theta_0+h/\sqrt n,q,a_V(q)}
 \{T_n-\psi_{q,n}(h)\}^2
 =\frac{b(q)^2}{I_V(q)}+o(1).
\]
When \(I_V(q)=0\), DQM implies that the score of
\(\cE_{q,a_V(q)}\) has zero \(L^2\)-norm.  Hence, for each fixed \(H\),
the one-observation squared Hellinger distances from
\(P_{\theta_0,q,a_V(q)}\) to
\(P_{\theta_0\pm H/\sqrt n,q,a_V(q)}\) are \(o(n^{-1})\).
Tensorization and the triangle inequality show that the total variation
distance between the corresponding \(n\)-fold product laws tends to zero.
On the other hand, differentiability gives
\[
 \left|\psi_{q,n}(H)-\psi_{q,n}(-H)\right|
 =2|b(q)|H+o(1).
\]
The two-point testing bound therefore yields
\(\liminf_n\cR_{n,H}(V,q)\geq c b(q)^2H^2\) for a universal
\(c>0\).  Letting \(H\to\infty\) proves the extended-value formula without
invoking the positive-information local asymptotic minimax theorem.  The
oracle formula is identical.  For a fixed menu, the greatest action maximizes
Fisher information and therefore minimizes \eqref{eq:stat-distortion}; the
same conclusion is immediate in the zero-derivative branch.
Taking the supremum over contexts and the infimum over menus proves
\eqref{eq:Ak}.  For attainment, represent a menu by an ordered \(k\)-tuple,
allowing repeated coordinates, and use compactness and lower semicontinuity.
\end{proof}

The expression in \eqref{eq:Ak} resembles a covering criterion, but its
geometry is not imposed in advance.  It is generated by the local decision
problem.  The appropriate action space may also be a quotient, since
different physical actions can induce the same local experiment.

For the geometric comparison, assume that \(\cQ\) is compact.  Let
\((\cM,d)\) be a compact metric space and let
\(q_{\cA}:\cA\to\cM\) be a surjection.  Suppose
\[
 D(q,a)=\overline D(q,q_{\cA}(a))
\]
for a continuous \(\overline D:\cQ\times\cM\to[0,\infty)\).  Assume that
each context has a unique oracle class \(m(q)\in\cM\), so that
\(\overline D(q,z)=0\) if and only if \(z=m(q)\).  Write
\(S=m(\cQ)\).  The maximum theorem implies that \(m\) is continuous, so
\(S\) is compact.  The surjection permits every quotient menu to be lifted
to a physical menu without increasing its cardinality, and therefore
\[
 A_k=\inf_{V\subset\cM,\,1\leq |V|\leq k}
       \sup_{q\in\cQ}\min_{z\in V}\overline D(q,z).
\]

Suppose next that \(\cM\) is a smooth Riemannian manifold near \(S\), and
that, on a neighborhood of \(S\), \(d\) agrees with the geodesic distance
induced by this Riemannian structure.  Suppose also that
\(\overline D(q,z)\) is three times continuously differentiable in \(z\)
near the graph of \(m\).  Assume
\begin{equation}
 \nabla_z\overline D(q,m(q))=0,
 \qquad
 G_q:=\nabla_z^2\overline D(q,m(q))\succ0,
 \label{eq:hessian-metric}
\end{equation}
with the eigenvalues of \(G_q\) uniformly bounded above and away from zero.
Finally, require uniform separation from the oracle graph:
\begin{equation}
 \eta(\eps):=
 \inf\{\overline D(q,z):d(z,m(q))\geq\eps\}>0
 \quad\text{for every }\eps>0.
 \label{eq:separation}
\end{equation}
This condition follows from compactness, continuity, and uniqueness when the
whole quotient space is compact.

For a compact set \(S\subset\cM\), let
\[
 \rho_k(S)=\inf_{V\subset\cM,\,1\leq |V|\leq k}
            \sup_{s\in S}\min_{z\in V}d(s,z)
\]
be its unrestricted-center covering radius.

\begin{theorem}[Statistical geometry of the menu frontier]
\label{thm:geometry}
Under \eqref{eq:hessian-metric}--\eqref{eq:separation}, there are constants
\(0<c_D\leq C_D<\infty\) such that, for all sufficiently large \(k\),
\begin{equation}
 c_D\rho_k(S)^2\leq A_k\leq C_D\rho_k(S)^2.
 \label{eq:covering-comparison}
\end{equation}
If \(S\) is Ahlfors regular of dimension \(r>0\), then
\begin{equation}
 A_k\asymp k^{-2/r}.
 \label{eq:high-resolution}
\end{equation}
No sharp high-resolution constant is asserted.
\end{theorem}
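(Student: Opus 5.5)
The plan is to prove a uniform two-sided quadratic comparison
\[
 c\,d(z,m(q))^2\leq\overline D(q,z)\leq C\,d(z,m(q))^2
\]
in a fixed neighbourhood of the oracle graph. We then combine it with the separation condition \eqref{eq:separation} to handle far-away centres, and finally invoke the standard covering-number bounds for Ahlfors-regular sets.

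\textbf{Step 1 (local quadratic comparison).} Take \(\delta>0\) below the injectivity radius on a compact neighbourhood of \(S\). For \(d(z,m(q))\leq\delta\), expand \(\overline D(q,\cdot)\) along the unit-speed geodesic from \(m(q)\) to \(z\). The gradient vanishes and the Hessian is \(G_q\), so Taylor's theorem with third-order remainder gives
\[
 \overline D(q,z)=\tfrac12\langle G_q\xi,\xi\rangle+O(|\xi|^3),\qquad |\xi|=d(z,m(q)).
\]
The \(O\)-constant is uniform by \(C^3\) regularity on a compact neighbourhood of the graph; since \(S\) and \(\cQ\) are compact, this uses continuity of \(m\). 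The eigenvalue bounds \(\lambda\leq G_q\leq\Lambda\) then give \(\tfrac{\lambda}{4}d^2\leq\overline D\leq\Lambda d^2\) after shrinking \(\delta\). This uniformity is the main technical point: one needs the third derivatives bounded uniformly in \(q\) in normal coordinates. I would do this by covering the compact set \(S\) with finitely many coordinate charts.

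\textbf{Step 2 (upper bound).} Let \(V\) be a near-optimal \(k\)-cover of \(S\) of radius \(\rho_k(S)(1+o(1))\), with \(\rho_k(S)\to0\). For large \(k\) this radius is below \(\delta\). For each \(q\), some \(z\in V\) has \(d(z,m(q))\leq\rho_k\), hence \(\min_{z\in V}\overline D(q,z)\leq\Lambda\rho_k^2(1+o(1))\). Taking the supremum over \(q\) gives the upper bound in \eqref{eq:covering-comparison}, and the factor \(1+o(1)\) is absorbed into \(C_D\).

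\textbf{Step 3 (lower bound).} Let \(V\) be any menu with \(|V|\leq k\), and put \(t=\sup_q\min_{z\in V}\overline D(q,z)\). It suffices to bound menus with \(t\leq A_k+\epsilon\leq 2C_D\rho_k^2\); the argument then takes \(\epsilon\to0\). For large \(k\), \(t<\eta(\delta)\), so by \eqref{eq:separation} every \(q\) has some \(z\in V\) with \(d(z,m(q))<\delta\) and \(\overline D(q,z)\leq t\). Step 1 then gives \(d(z,m(q))\leq(4t/\lambda)^{1/2}\). Hence \(V\) covers \(S\) at radius \((4t/\lambda)^{1/2}\), so \(\rho_k(S)^2\leq 4t/\lambda\). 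Taking the infimum over \(V\) yields \(A_k\geq(\lambda/4)\rho_k(S)^2\).

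\textbf{Step 4 (Ahlfors rate).} Suppose \(\mu(B(s,\epsilon))\asymp\epsilon^r\) on \(S\) for small \(\epsilon\). A maximal \(\epsilon\)-separated subset of \(S\) has at most \(C\epsilon^{-r}\) points by disjointness of the balls of radius \(\epsilon/2\), and it covers \(S\) at radius \(\epsilon\). This gives \(\rho_k\lesssim k^{-1/r}\). Conversely, any \(k\) balls of radius \(\epsilon\), with centres possibly off \(S\), each meet \(S\) in a set contained in a ball of radius \(2\epsilon\) centred on \(S\), of measure at most \(C(2\epsilon)^r\). Covering \(\mu(S)>0\) therefore forces \(k\gtrsim\epsilon^{-r}\), i.e.\ \(\rho_k\gtrsim k^{-1/r}\). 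Combining with \eqref{eq:covering-comparison} gives \(A_k\asymp k^{-2/r}\).

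The main obstacle is the uniformity in Step 1. The separation step needs care only to ensure that nearby centres, not far ones, realise the minimum.
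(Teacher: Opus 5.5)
Your proposal is correct and follows the paper's proof essentially step for step. The shared steps are a uniform local quadratic comparison from Taylor expansion at the oracle class, the upper bound from a near-optimal metric covering, and the lower bound via the separation condition, which forces each context's distortion-minimizing menu point into the quadratic neighbourhood; the Ahlfors rate likewise comes from recentring ambient balls (radius doubled) against the upper volume bound, and from a maximal separated net against the lower volume bound. Your remark that the uniform third-order remainder needs the $z$-derivatives controlled jointly in $(q,z)$ on a compact neighbourhood of the graph makes explicit what the paper takes for granted when it writes ``uniformly in $q$''.
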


\begin{proof}
In normal coordinates at \(m(q)\), Taylor expansion and the uniform Hessian
bounds give constants \(\rho_0,c_D,C_D>0\) such that
\begin{equation}
 c_Dd\{z,m(q)\}^2\leq\overline D(q,z)
 \leq C_Dd\{z,m(q)\}^2
 \label{eq:local-quadratic-comparison}
\end{equation}
whenever \(d\{z,m(q)\}\leq\rho_0\), uniformly in \(q\).  Compactness gives
\(\rho_k(S)\to0\), so an asymptotically optimal metric covering and the upper
bound in \eqref{eq:local-quadratic-comparison} prove the upper inequality in
\eqref{eq:covering-comparison}.  This also yields \(A_k\to0\).  By
\eqref{eq:separation}, every distortion-minimizing point of a nearly optimal
menu then lies in the \(\rho_0\)-neighborhood of the relevant oracle class.
The lower inequality in \eqref{eq:local-quadratic-comparison}, followed by the
menu infimum, proves the other half of \eqref{eq:covering-comparison}.

For completeness, let \(\mu\) be an Ahlfors-regular measure on \(S\), so that
\(cR^r\leq\mu\{B(s,R)\}\leq CR^r\) at small radii.  A covering by \(k\)
ambient balls of radius \(R\) may be recentered at points of \(S\) whenever
they intersect \(S\), increasing their radii by at most a factor of two.
The upper volume bound then gives \(\rho_k(S)\geq c'k^{-1/r}\).  A maximal
separated set and the lower volume bound give a \(C'k^{-1/r}\)-net.  Hence
\(\rho_k(S)\asymp k^{-1/r}\), which proves \eqref{eq:high-resolution}.
\end{proof}

The three ordered models give transparent forms of \(D\) on their
positive-information branches; the zero-derivative and zero-information
cases use \eqref{eq:stat-distortion}.  For binary
attenuation with oracle quality one,
\[
 I_c(\theta_0)=\frac{c^2}{1-c^2\theta_0^2},\qquad
 D(q,a)=b(q)^2\{c(q,a)^{-2}-1\}.
\]
For Gaussian location with action-dependent variance,
\[
 D(q,a)=b(q)^2\{\sigma(q,a)^2-\sigma_\star(q)^2\}.
\]
For a Poisson count with mean \(\lambda(q,a)\theta\),
\[
 D(q,a)=b(q)^2\theta_0
 \{\lambda(q,a)^{-1}-\lambda_\star(q)^{-1}\}.
\]
The resulting quotient metrics need not agree globally.  Their common local
quadratic behavior is a consequence of the inverse-information Hessian.

\section{Differential minimax transfer}
\label{sec:differential}

Theorem~\ref{thm:fixed-menu} takes the statistical limit with the menu fixed.
To allow \(k=k_n\) to grow, the approximation to local minimax risk must be
accurate relative to the shrinking fixed-menu excess risk.  This section gives a
sufficient condition in terms of differentiated Bayes risks.

For \(v>0\), let \(\cG_H(v)\) be the bounded Gaussian location experiment
\begin{equation}
 Y=h+\sqrt v Z,\qquad Z\sim N(0,1),\qquad |h|\leq H.
 \label{eq:gaussian-reference}
\end{equation}
Write \(R_H^G(v)\) for its minimax squared-error risk, \(r_v^G(\pi)\) for
the Bayes risk under a prior \(\pi\), and \(w_v^G(Y)\) for the posterior
variance.  The following properties will be used.

\begin{lemma}[Gaussian reference]
\label{lem:gaussian-reference}
Uniformly for \(v\) in compact subsets of \((0,\infty)\),
\begin{align}
 0&\leq R_H^G(v_2)-R_H^G(v_1)\leq v_2-v_1,
       &&0<v_1<v_2,                                      \label{eq:G-Lipschitz}\\
 R_H^G(v)/v&\longrightarrow1,
       &&H\to\infty,                                    \label{eq:G-minimax-limit}\\
 \partial_v r_v^G(\pi)
   &=\E_\pi\{w_v^G(Y)/v\}^2.                           \label{eq:G-mmse-derivative}
\end{align}
Moreover, \(|\partial_v^2r_v^G(\pi)|\leq C(1+H^6)\) for every prior on
\([-H,H]\).  If
\begin{equation}
 R_H^G(v)-r_v^G(\pi_H)\leq\eps_H,\qquad
 H^6\eps_H\to0,
 \label{eq:near-lf}
\end{equation}
then
\begin{equation}
 \E_{\pi_H}\left\{w_v^G(Y)/v-1\right\}^2\longrightarrow0.
 \label{eq:gaussian-pvc}
\end{equation}
\end{lemma}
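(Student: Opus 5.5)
I would treat the four claims separately, relying on the Blackwell structure of the Gaussian chain and on the classical I-MMSE identity.

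\emph{Lipschitz bound \eqref{eq:G-Lipschitz}.} Monotonicity follows from the Blackwell order: \(\cG_H(v_2)\) is obtained from \(\cG_H(v_1)\) by adding independent \(N(0,v_2-v_1)\) noise. By Theorem~\ref{thm:collapse} with \(n=1\) (or directly by the garbling argument), the minimax risk is nondecreasing in \(v\). For the upper increment, I would use the reverse comparison. Any estimator \(T\) for \(v_1\) can be turned into a procedure for \(v_2\) by the rescaling \(Y\mapsto (v_1/v_2)^{1/2}Y\), but this changes the mean. So I would argue through Bayes risks instead. Since \(R_H^G(v)=\sup_\pi r_v^G(\pi)\) (compact parameter set, squared loss, minimax theorem), it suffices to show \(r_{v_2}^G(\pi)-r_{v_1}^G(\pi)\le v_2-v_1\) for every \(\pi\). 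This follows from \eqref{eq:G-mmse-derivative} together with \(w_v^G(Y)\le v\) almost surely. That pointwise bound is the Brascamp--Lieb / log-concavity fact: the posterior is the prior tilted by a Gaussian likelihood of variance \(v\), so its variance is at most \(v\). Hence \(\partial_v r_v^G(\pi)\in[0,1]\).

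\emph{Limit \eqref{eq:G-minimax-limit}.} The bound \(R_H^G(v)\le v\) comes from \(T=Y\). For the lower bound, take the prior with density proportional to \(\cos^2(\pi h/(2H))\) on \([-H,H]\). The van Trees inequality gives \(r_v^G\ge (1/v+\pi^2/H^2)^{-1}\), which tends to \(v\) uniformly on compacts.

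\emph{Derivative \eqref{eq:G-mmse-derivative} and the second-derivative bound.} For the derivative, I would use the heat equation: the marginal density \(p_v=\pi*\varphi_v\) satisfies \(\partial_v p_v=\tfrac12 p_v''\). Tweedie's formula gives \(\E(h\mid Y)=Y+v\,(\log p_v)'\) and \(w_v=v+v^2(\log p_v)''\). Write \(r_v=\E w_v\) and differentiate under the integral; this is legitimate since \(h\) is bounded and all moments are polynomial in \(H\). Integration by parts then yields the known identity \(\tfrac{d}{dv}\mathrm{mmse}=\E(w_v/v)^2\); equivalently, in SNR form \(\tfrac{d}{ds}\mathrm{mmse}(s)=-\E\,\Var(h\mid Y)^2\) with \(s=1/v\), after rescaling. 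For the second derivative, differentiating once more gives posterior cumulants up to order three or four, each divided by powers of \(v\). Every posterior central moment of order \(j\) is bounded by \((2H)^j\). After the chain rule, the worst term should be of order \(H^6\) uniformly for \(v\) in compacts, which would give \(C(1+H^6)\).

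\emph{Near-least-favorable consequence \eqref{eq:gaussian-pvc}.} This is the main step. Set \(g(u)=R_H^G(u)-r_u^G(\pi_H)\ge0\), so \(g(v)\le\eps_H\). For \(\delta>0\) I would use the minimax increment in both directions. Since \(R_H^G\) is monotone and close to \(v\), one would want \(R_H^G(v+\delta)-R_H^G(v)\ge \delta-o(\delta)\); the difficulty is that \eqref{eq:G-minimax-limit} only controls ratios. A cleaner route avoids \(R\)'s derivative. The Bayes risk \(r_u(\pi_H)\le R_H^G(u)\le u\) holds for all \(u\), and \(r_v(\pi_H)\ge R_H^G(v)-\eps_H\). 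Also \(R_H^G(v)\ge v-\kappa_H\) with \(\kappa_H\) from the van Trees bound, of order \(v^2/H^2\). Taylor expansion with the second-derivative bound then gives
\[
 v+\delta \ge r_{v+\delta}\ge v-\kappa_H-\eps_H+\delta\,\partial_v r_v-C(1+H^6)\delta^2 .
\]
Solving, \(1-\partial_v r_v\le (\kappa_H+\eps_H)/\delta+C(1+H^6)\delta\). Optimising \(\delta\) gives a bound of order \(\sqrt{(1+H^6)(\kappa_H+\eps_H)}\). The problem is that \(\kappa_H\) is not small relative to \(H^{-6}\). I therefore expect the needed refinement to be to replace \(R_H^G\)'s lower bound by \(R_H^G(v)\) itself, comparing \(r_{v+\delta}(\pi_H)\le R_H^G(v+\delta)\) and using \eqref{eq:G-Lipschitz} only as \(R(v+\delta)-R(v)\le\delta\). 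This gives \(1-\partial_v r_v\le \eps_H/\delta+C(1+H^6)\delta\), which tends to zero by choosing \(\delta=\eps_H^{1/2}(1+H^6)^{-1/2}\), exactly under \(H^6\eps_H\to0\). Finally, \(w\le v\) gives \(\E(w/v-1)^2\le \E(1-w/v)\cdot 1\), and \(\E(1-(w/v)^2)=1-\partial_vr_v\). Since \(0\le w/v\le1\), we have \((1-x)^2\le 1-x^2\), so \eqref{eq:gaussian-pvc} follows.
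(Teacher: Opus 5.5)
There is a genuine gap. Both your upper half of \eqref{eq:G-Lipschitz} and your final step rest on the pointwise bound \(w_v^G(Y)\le v\), and that bound is false for a general prior on \([-H,H]\).

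Brascamp--Lieb gives \(w_v^G(Y)\le v\) when the prior is log-concave, and that is not assumed here. Least-favorable priors for a bounded normal mean are in fact finitely supported. For the symmetric two-point prior on \(\{-H,H\}\), the posterior at \(Y=0\) is the prior itself, so \(w_v^G(0)=H^2>v\) once \(H^2>v\).

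Even the averaged consequence \(\partial_v r_v^G(\pi)\le1\) fails. Take the uniform prior on a lattice of spacing \(a\) filling a long interval. Brown's identity \(r_v^G(\pi)=v-v^2J(Y)\) together with Poisson summation gives \(\partial_v r_v^G(\pi)\approx1+2\kappa(\kappa-2)e^{-\kappa}\), with \(\kappa=4\pi^2v/a^2\); this is about \(1.03\) at \(\kappa=8\). The reason is that the gap \(v-r_v^G(\pi)\) must shrink once the lattice becomes invisible at \(v\gg a^2\).

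So the Lipschitz bound belongs to the minimax value, not to individual Bayes risks. It comes from the scaling route you discarded:
\begin{itemize}
\item Let \(T_1\) be minimax for \(\cG_H(v_1)\) and \(Y=h+\sqrt{v_2}Z\).
\item Then \(\sqrt{v_1/v_2}\,Y\) is an observation from \(\cG_H(v_1)\) at the parameter \(\sqrt{v_1/v_2}\,h\in[-H,H]\).
\item The estimator \(\sqrt{v_2/v_1}\,T_1(\sqrt{v_1/v_2}\,Y)\) therefore gives \(R_H^G(v_2)\le(v_2/v_1)R_H^G(v_1)\).
\item Since \(R_H^G(v_1)\le v_1\), this yields \(R_H^G(v_2)-R_H^G(v_1)\le v_2-v_1\).
\end{itemize}
The change of mean you worried about is harmless precisely because it shrinks the parameter interval. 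This is the paper's representation \(R_H^G(v)=vM(H/\sqrt v)\), with \(M\) nondecreasing and at most one.

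Your final step also contains a sign error that the false bound hides. Combine \(r_{v+\delta}^G(\pi_H)\le R_H^G(v+\delta)\le R_H^G(v)+\delta\le r_v^G(\pi_H)+\eps_H+\delta\) with the curvature bound. What you get is \(\partial_v r_v^G(\pi_H)-1\le\eps_H/\delta+C(1+H^6)\delta\). That is an upper bound on \(\partial_v r_v^G(\pi_H)\), not a bound on \(1-\partial_v r_v^G(\pi_H)\).

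That upper bound is exactly what is needed, and no pointwise control of \(w_v^G\) is required:
\begin{itemize}
\item Optimizing \(\delta\) and using \eqref{eq:G-mmse-derivative} gives \(\limsup_{H}\E_{\pi_H}\{w_v^G(Y)/v\}^2\le1\).
\item By \eqref{eq:G-minimax-limit}, \(\E_{\pi_H}\{w_v^G(Y)/v\}=r_v^G(\pi_H)/v\ge\{R_H^G(v)-\eps_H\}/v\to1\).
\item Expanding \(\E(w/v-1)^2=\E(w/v)^2-2\E(w/v)+1\) then proves \eqref{eq:gaussian-pvc}.
\end{itemize}
This is the paper's argument.

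A warning sign in your version: if \(w\le v\) held, then \(\E(1-w/v)^2\le\E(1-w/v)=1-r_v^G(\pi_H)/v\to0\) would follow from \(\eps_H\to0\) alone. The curvature bound and the rate \(H^6\eps_H\to0\) would then have nothing to do.

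Your treatment of \eqref{eq:G-minimax-limit}, \eqref{eq:G-mmse-derivative}, the monotone half of \eqref{eq:G-Lipschitz}, and the curvature bound is fine and agrees with the paper.
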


The first inequality follows by Gaussian convolution and scaling.  A
cosine-squared prior and the van Trees inequality \citep{GillLevit1995} give
\(R_H^G(v)\geq\{v^{-1}+\pi^2/H^2\}^{-1}\), proving
\eqref{eq:G-minimax-limit}.  Identity \eqref{eq:G-mmse-derivative} and the
curvature bound are standard Gaussian MMSE identities
\citep{GuoShamaiVerdu2005,GuoWuShamaiVerdu2011}.  A proof of the final
assertion is included in Appendix~\ref{app:gaussian-pvc}.  Its role is
important: least favorability, together with the one-Lipschitz minimax value,
forces the normalized posterior variance to concentrate near one.

\begin{proposition}[Posterior Dirichlet identity]
\label{prop:posterior-dirichlet}
Let \(U\) be the unknown quantity, let \(G=g(U)\) be a bounded target under an
arbitrary prior, and let \(X_s\)
be obtained from \(X_0\) by a parameter-free Markov degradation semigroup.
Write \(m_s(x)=\E(G\mid X_s=x)\) and
\(r_s^g=\E\Var(G\mid X_s)\).
\begin{enumerate}
\item Suppose the semigroup has a finite or countable jump generator
\[
 Lf(x)=\sum_zq(x,z)\{f(z)-f(x)\},
\]
and write \(\mu_s(x)=\Pp(X_s=x)\).  At the differentiation point, assume
\(\mu_s(z)>0\) whenever \(\mu_s(x)q(x,z)\neq0\) for some \(x\), and suppose
the displayed sums are absolutely convergent.  Then
\begin{equation}
 \partial_sr_s^g
 =\E\sum_zq(X_s,z)\{m_s(z)-m_s(X_s)\}^2.
 \label{eq:jump-dirichlet}
\end{equation}
\item For Brownian convolution \(X_s=X_0+\sqrt{s}\,Z\), under the usual
differentiation and boundary conditions,
\begin{equation}
 \partial_sr_s^g=\E\|\nabla m_s(X_s)\|^2.
 \label{eq:heat-dirichlet}
\end{equation}
\end{enumerate}
\end{proposition}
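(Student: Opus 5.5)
The plan is to write the Bayes risk as \(r_s^g=\E G^2-\E m_s(X_s)^2\), so that only \(\partial_s\E m_s(X_s)^2\) has to be computed. Two ingredients describe how the posterior mean evolves along the degradation. Since \(X_s\) is a parameter-free degradation of \(X_0\), the forward semigroup \(P_s\) acts on joint laws without reference to \(U\). Write \(\nu_s(x)=\E\{G\ind(X_s=x)\}\) in the discrete case, so that \(m_s=\nu_s/\mu_s\). Both \(\mu_s\) and \(\nu_s\) satisfy the same Kolmogorov forward equation: \(\partial_s\mu_s(z)=\sum_x\mu_s(x)q(x,z)-\mu_s(z)\sum_x q(z,x)\), and the same holds for \(\nu_s\). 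This uses only that \(G\) is a function of \(U\) and that the kernel does not involve \(U\), because \(\nu_s\) is \(\mu_s\) weighted by the conditional expectation of \(G\), which is carried along by a kernel independent of \(U\).

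For the jump case, write \(\E m_s(X_s)^2=\sum_z\nu_s(z)^2/\mu_s(z)\) and differentiate termwise; absolute convergence permits the interchange. The derivative equals
\[
\sum_z\{2m_s(z)\partial_s\nu_s(z)-m_s(z)^2\partial_s\mu_s(z)\}.
\]
Substituting the forward equations and writing \(\nu_s(x)=\mu_s(x)m_s(x)\) turns this into
\[
\sum_{x,z}\mu_s(x)q(x,z)\{2m_s(z)m_s(x)-m_s(z)^2\}-\sum_{x,z}\mu_s(z)q(z,x)\{2m_s(z)^2-m_s(z)^2\}.
\]
Relabeling the second sum so that the rate runs from \(x\) to \(z\) gives \(\sum_{x,z}\mu_s(x)q(x,z)m_s(x)^2\), and the two sums combine to
\[
-\sum_{x,z}\mu_s(x)q(x,z)\{m_s(z)-m_s(x)\}^2 .
\]
The support assumption \(\mu_s(z)>0\) whenever \(\mu_s(x)q(x,z)\ne0\) guarantees that \(m_s(z)\) is defined and that the division by \(\mu_s(z)\) is legitimate on every term that appears. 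Since \(r_s^g=\E G^2-\E m_s^2\), negating yields \eqref{eq:jump-dirichlet}.

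For the Brownian case, apply the same computation to the densities \(p_s\) of \(X_s\) and \(n_s=p_sm_s\), both of which solve the heat equation \(\partial_sf=\tfrac12\Delta f\). Then
\[
\partial_s\int n_s^2/p_s=\tfrac12\int\{2m_s\Delta n_s-m_s^2\Delta p_s\}.
\]
Integrating by parts, with the usual boundary conditions discarding the boundary terms, and expanding \(\nabla n_s=m_s\nabla p_s+p_s\nabla m_s\) leaves \(-\int p_s\|\nabla m_s\|^2\), up to the factor fixed by the time parametrization. The paper states \eqref{eq:heat-dirichlet} without the factor \(1/2\). With \(X_s=X_0+\sqrt sZ\), the generator is \(\tfrac12\Delta\), so I would check the normalization against \eqref{eq:G-mmse-derivative}. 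For the Gaussian case, \(\nabla m_s=w_s/s\) by the Tweedie identity, and \(\partial_vr=\E(w/v)^2\) there. This suggests the intended convention absorbs the \(1/2\), or that the statement is meant up to that constant, and I would record it explicitly.

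I expect the main obstacle to be rigor in the interchange of differentiation with the sums, and the boundary terms in the diffusion case. Boundedness of \(G\) gives \(|m_s|\le\|g\|_\infty\), which dominates most terms. In the jump case, the remaining requirement is absolute convergence of \(\sum\mu q\), which the hypotheses supply.
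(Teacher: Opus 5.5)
Your proposal is correct and follows the paper's proof essentially step for step. Both arguments write the Bayes risk as the prior second moment minus the explained variance \(\sum_z\nu_s(z)^2/\mu_s(z)\), use the common forward equation for \(\mu_s\) and \(\nu_s\), differentiate termwise and recombine into the edge energy, and integrate by parts under \(\partial_sf=\frac12\Delta f\) in the Brownian case. The only cosmetic difference is that you keep the outflow term explicit and relabel, where the paper uses the convention \(\sum_zq(x,z)=0\).

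Your hesitation about a factor \(1/2\) is unfounded. Integrating \(\int\{2m_s\Delta n_s-m_s^2\Delta p_s\}\) by parts gives \(-2\int p_s\|\nabla m_s\|^2\), so the prefactor \(\frac12\) from the generator leaves exactly \(-\int p_s\|\nabla m_s\|^2\). Hence \eqref{eq:heat-dirichlet} holds as stated, which is what your own Tweedie check \(\nabla m_s=w_s/s\) against \eqref{eq:G-mmse-derivative} confirms.
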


The proof is in Section S2 of the Supplementary Material.  Coordinate-flip
rates in \eqref{eq:jump-dirichlet} give the binary hypercube identity; the
pure-death rates \(q(j,j-1)=j\) give the Poisson identity; and
\eqref{eq:heat-dirichlet} gives the Gaussian posterior-covariance formula.
This posterior-energy identity supplies the exact differentiation step.  The
all-prior approximations \eqref{eq:GT2}--\eqref{eq:GT3}, which are needed for
minimax transfer, remain separate model-specific statements.

We now state the transfer condition.  Let \(\cK\Subset(0,\infty)\), and
consider local experiments
\[
 \cE_{n,H}(v)=\{P_{n,v,h}:|h|\leq H\},\qquad v\in\cK,
\]
for the identity target \(h\).  Denote their minimax and Bayes risks by
\(R^E_{n,H}(v)\) and \(r^E_{n,v}(\pi)\).

\begin{assumption}[Differentiated Gaussian transfer]
\label{ass:GT}
Uniformly over \(v\in\cK\), all priors on \([-H,H]\), and the stated range
of \(H=H_n\), suppose the following.
\begin{enumerate}
\item Bayes--minimax duality holds:
\[
 R^E_{n,H}(v)=\sup_{\pi}r^E_{n,v}(\pi),
\]
with an attaining least-favorable prior.
 \item Every Bayes risk is differentiable in \(v\), and
 \begin{equation}
  0\leq \partial_vr^E_{n,v}(\pi)\leq C_{\cK}H^4.
  \label{eq:GT1}
 \end{equation}
\item There are deterministic errors \(\eta_{0,n}(H)\) and
\(\eta_{1,n}(H)\) such that
\begin{align}
 |r^E_{n,v}(\pi)-r_v^G(\pi)|
   &\leq\eta_{0,n}(H),                                  \label{eq:GT2}\\
  \left|\partial_vr^E_{n,v}(\pi)
         -\E^G\{w_v^G(Y)/v\}^2\right|
    &\leq\eta_{1,n}(H).                                  \label{eq:GT3}
 \end{align}
 \end{enumerate}
 \end{assumption}

A representation of the derivative as the expectation of a uniformly bounded
square is a sufficient, but not necessary, way to verify \eqref{eq:GT1}.
Assumption~\ref{ass:GT} requires only the nonnegative uniform derivative bound
and the direct comparison in \eqref{eq:GT3}.

The uniformity over priors in \eqref{eq:GT2}--\eqref{eq:GT3} is essential.
Pointwise convergence of likelihood ratios, asymptotic normality of one
estimator, and ordinary LAN do not give this assumption.

The all-prior conditions can nevertheless be obtained from prior-free
properties of the sampling laws.  Suppose that increasing \(v\) is a
parameter-free Markov degradation, and that a sufficient statistic admits a
Gaussian embedding.  The primitive conditions in Section S5 of the
Supplementary Material require a multiplicative local Gaussian likelihood
comparison with errors \(\eps_n,\kappa_n,\tau_n,b_n\), together with an
edge-score expansion, Fisher calibration, and energy-tail bounds measured by
\(\gamma_n,\chi_n,\tau_n^\Gamma\).  None of these quantities depends on a
prior or posterior distribution.

\begin{theorem}[Primitive likelihood-generator transfer]
\label{thm:primitive-transfer}
Assume the prior-free likelihood-generator conditions PLG1--PLG4 in Section S5
of the Supplementary Material, compact Bayes--minimax duality, and the
posterior Dirichlet identity in Proposition~\ref{prop:posterior-dirichlet}.
Then \eqref{eq:GT2}--\eqref{eq:GT3} hold uniformly over all priors, with
\begin{align}
 \eta_{0,n}(H)
 &\leq C\left[H^2\eps_n(H)+\kappa_n(1+H^3)
               +H^2\{\tau_n(H)+b_n(H)\}\right],
 \label{eq:primitive-eta0}\\
 \eta_{1,n}(H)
 &\leq C\left[H^4\eps_n(H)+\kappa_n(1+H^5)\right. \notag\\
 &\hspace{17mm}\left.
               +H^4\{\tau_n(H)+b_n(H)+\gamma_n(H)\}
               +\chi_n(H)+\tau_n^\Gamma(H)\right].
 \label{eq:primitive-eta1}
\end{align}
Moreover,
\[
 0\leq\partial_vr^E_{n,v}(\pi)
 \leq C_{\cK}H^4+C\eta_{1,n}(H).
\]
Consequently, if \(H\geq1\) and \(\eta_{1,n}(H)\to0\), the primitive
conditions imply Assumption~\ref{ass:GT} after enlarging \(C_{\cK}\).
\end{theorem}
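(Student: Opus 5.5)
The plan has three stages: a zeroth-order comparison of Bayes risks, a first-order comparison of their \(v\)-derivatives, and then the derivative bounds. Throughout, the primitive conditions PLG1--PLG4 are treated as statements about the sampling laws alone. Mixing against an arbitrary prior \(\pi\) on \([-H,H]\) then carries them to the marginal and posterior laws with the same error functions.

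\emph{Step 1 (bound \eqref{eq:primitive-eta0}).} Write the Bayes risk as \(r^E_{n,v}(\pi)=\E_\pi\Var_E(h\mid X)\), with \(X\) the sufficient statistic admitting the Gaussian embedding. The multiplicative local likelihood comparison gives \(p_{n,v,h}(x)=\phi_v(x-h)\{1+\delta_{n}(x,h)\}\) on a good set, with \(|\delta_n|\le\eps_n(H)\). Integrating against \(\pi\), the marginal densities and posterior numerators agree multiplicatively up to \(1+O(\eps_n)\). Since \(|h|\le H\), the posterior means and second moments then differ by \(O(H^2\eps_n)\) pointwise on the good set. The quadrature error \(\kappa_n\) (the discretization of the sufficient statistic) costs \(\kappa_n(1+H^3)\), where the extra power of \(H\) comes from the posterior third moment. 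The tail and bias terms \(\tau_n,b_n\) control the bad set and any centering drift, and each contributes at most \(H^2\) times its probability, because \(\Var(h\mid X)\le H^2\). Summing the three contributions gives \eqref{eq:primitive-eta0}.

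\emph{Step 2 (bound \eqref{eq:primitive-eta1}, the main obstacle).} The difficulty is that a derivative must be compared uniformly in \(\pi\), and no estimate on values can give this. I would use Proposition~\ref{prop:posterior-dirichlet}(1) to write \(\partial_v r^E_{n,v}(\pi)=\E\sum_z q(X_v,z)\{m_v(z)-m_v(X_v)\}^2\), where \(q\) is the parameter-free degradation generator reparametrized in \(v\). On the Gaussian side, \eqref{eq:G-mmse-derivative} together with \eqref{eq:heat-dirichlet} gives \(\E\{w^G_v/v\}^2=\E\{\partial_y m^G_v(Y)\}^2\).

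The comparison has three parts:
\begin{enumerate}
\item The edge-score expansion replaces each difference \(m_v(z)-m_v(x)\) by \((z-x)\) times the posterior covariance between \(h\) and the local score. This incurs a relative error \(\gamma_n\), weighted by \(H^4\) because the squared covariance is bounded by \(H^4\).
\item Fisher calibration matches the resulting jump second moment \(\sum_z q(x,z)(z-x)^2\), after normalization, to the Gaussian quadratic variation up to an error \(\chi_n\).
\item Step 1's multiplicative comparison, applied now to posterior covariances rather than variances, converts \(\Cov_E\) into \(\Cov_G=w^G_v/v\). This costs \(H^4\eps_n\), \(\kappa_n(1+H^5)\), and \(H^4(\tau_n+b_n)\).
\end{enumerate}
The energy-tail bound \(\tau^\Gamma_n\) handles the part of the Dirichlet sum coming from far jumps and from the bad set, where no pointwise comparison is available. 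I expect the delicate point to be ensuring that the jump-sum truncation and the posterior comparison use the same good set, so that errors are not squared or lost in the normalization.

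\emph{Step 3 (derivative bounds).} Nonnegativity of the derivative is immediate from \eqref{eq:jump-dirichlet}, or equivalently from Blackwell monotonicity along the degradation chain. For the upper bound, the Gaussian term satisfies \(\E\{w^G_v/v\}^2\le H^4/\inf_{\cK}v^2\). By \eqref{eq:GT3}, \(\partial_v r^E_{n,v}(\pi)\le C_\cK H^4+C\eta_{1,n}(H)\). If \(H\ge1\) and \(\eta_{1,n}\to0\), then \(\eta_{1,n}\le H^4\) eventually, which gives \eqref{eq:GT1} after enlarging \(C_\cK\). Together with the assumed compact Bayes--minimax duality, this yields Assumption~\ref{ass:GT}.
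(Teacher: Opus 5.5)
Your Steps 1 and 3 follow the paper's argument.
\begin{itemize}
\item Mixing PLG1 over the prior gives posterior density ratios in $[e^{-2\eps_n},e^{2\eps_n}]$, hence errors $CH\eps_n$ for posterior means and $CH^2\eps_n$ for posterior variances.
\item Quadrature is controlled by the Gaussian-mixture variation bounds, where the powers $H^3$ and $H^5$ come from the posterior third central moment.
\item The derivative bound follows from nonnegativity of the Dirichlet form and $(w^G_\pi/v)^2\le C_{\cK}H^4$.
\end{itemize}
One small misreading: $b_n$ is the Gaussian mass outside the quadrature interval $I_{n,v}$, not a centering drift.

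The gap is the edge step in Step~2. You linearize $m(z)-m(x)$ as $(z-x)$ times a posterior covariance with a local score. You then calibrate the squared jump length $\sum_z q(x,z)(z-x)^2$ of the embedded coordinate. This closes only if each edge-score slope $\alpha_e$ equals $\{t_{n,v}(z')-t_{n,v}(z)\}/v$ up to relative error $o(1)$. The hypotheses do not supply that. PLG1 determines $\ell_e(h)-\ell_e(0)$ only up to $\pm4\eps_n(H)$. It therefore locates $\alpha_e$ only to within $O(\eps_n(H)/H)=O(H^2/\sqrt n)$ in regular models, which is $H^2$ times the size of $\alpha_e$ itself. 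For the same reason you cannot obtain posterior-mean increments along an edge by applying the pointwise comparison at both endpoints. The two errors of size $CH\eps_n(H)$ are of larger order than the increment $\alpha_e w$.

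The missing idea is the exact, prior-free tilt relation between adjacent posteriors: the posterior at $z'$ is the posterior at $z$ reweighted by $\exp\{\alpha_e h+\rho_e(h)\}$.
\begin{itemize}
\item Under the smallness condition $|\alpha_e|H+2\omega_e(H)\le c_0$ in PLG4, the uniform edge-tilt lemma gives $|\mu^E_\pi(z')-\mu^E_\pi(z)-\alpha_e w^E_\pi(z)|\le CHB_e(H)$ for every prior. No Gaussian comparison is needed at the destination. This also removes your concern about matching good sets, since edges are classified by their origin only.
\item Squaring and summing against the rates gives $|\Gamma_{n,v}\mu^E_\pi(z)-w^E_\pi(z)^2\sum_{z'}q_{n,v}(z,z')\alpha_e^2|\le C\Xi_{n,v,H}(z)\le C\chi_n(H)$.
\item The calibration actually required is $|\sum_{z'}q_{n,v}(z,z')\alpha_e^2-v^{-2}|\le\gamma_n(H)$. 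It costs $H^4\gamma_n(H)$ after multiplication by $w^2\le H^4$.
\end{itemize}
So the roles you assign to $\gamma_n$ and $\chi_n$ are reversed. The calibrated object is the rate-weighted squared edge score, not the squared jump length. Only after this is PLG1 used, to compare $w^E_\pi(z)$ with $w^G_\pi\{t_{n,v}(z)\}$ at the origin, followed by quadrature of $m^G_\pi(w^G_\pi/v)^2$.

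Finally, $\tau_n^\Gamma$ does not concern far jumps. It handles origins outside the central set, where the rates may be unbounded. There each squared increment is at most $4H^2\min\{1,(e^{\Delta_e(H)}-1)^2\}$, by a total-variation bound on the tilt. PLG2 assumes that this deterministic envelope has expectation at most $\tau_n^\Gamma(H)$ off the central set.
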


The proof of Theorem~\ref{thm:primitive-transfer}, including the edge-tilt and
quadrature arguments behind \eqref{eq:primitive-eta0}--
\eqref{eq:primitive-eta1}, is given in Section S5 of the Supplementary
Material.  The theorem is not a consequence of ordinary LAN: its generator
calibration controls the derivative of Bayes risk along the degradation
coordinate.

\begin{theorem}[Differential minimax transfer]
\label{thm:differential-transfer}
Let \(\delta_n\downarrow0\), and suppose
\([v_n,v_n+\delta_n]\) remains in a compact interior subset of \(\cK\).
If \(H_n\to\infty\) and
\begin{equation}
 H_n^6\eta_{0,n}(H_n)\to0,\qquad
 \eta_{1,n}(H_n)\to0,\qquad
 H_n^{10}\delta_n\to0,
 \label{eq:transfer-rates}
\end{equation}
then, under Assumption~\ref{ass:GT},
\begin{equation}
 R^E_{n,H_n}(v_n+\delta_n)-R^E_{n,H_n}(v_n)
 =\delta_n\{1+o(1)\}.
 \label{eq:differential-transfer}
\end{equation}
The conclusion is uniform when the constants in
Assumption~\ref{ass:GT} are uniform.
\end{theorem}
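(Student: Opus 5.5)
We prove the two inequalities in \eqref{eq:differential-transfer} separately. In each direction we use Bayes--minimax duality (Assumption~\ref{ass:GT}(1)) at one endpoint and differentiate the Bayes risk of a fixed prior along the chain. Abbreviate \(H=H_n\), \(v=v_n\) and \(\delta=\delta_n\).

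\emph{Upper bound.} Let \(\pi_+\) be least favorable for \(\cE_{n,H}(v+\delta)\). Since \(r^E_{n,v}(\pi_+)\leq R^E_{n,H}(v)\),
\[
 R^E_{n,H}(v+\delta)-R^E_{n,H}(v)\leq r^E_{n,v+\delta}(\pi_+)-r^E_{n,v}(\pi_+)
 =\int_v^{v+\delta}\partial_ur^E_{n,u}(\pi_+)\dd u .
\]
By \eqref{eq:GT3}, the integrand is within \(\eta_{1,n}(H)\) of \(\E^G_{\pi_+}\{w_u^G(Y)/u\}^2\). Since \(w_u^G\leq\) the prior variance, this quantity is bounded by a constant times \(H^4\); this crude bound is not sharp enough. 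We therefore compare it with its value at the endpoint \(v+\delta\), using the curvature bound \(|\partial_u^2r_u^G(\pi)|\leq C(1+H^6)\) from Lemma~\ref{lem:gaussian-reference}. This gives an error of order \(H^6\delta\), and \(H^6\delta\to0\) because \(H^{10}\delta\to0\). It then remains to show \(\E^G_{\pi_+}\{w_{v+\delta}^G/(v+\delta)\}^2\to1\). This follows from \eqref{eq:gaussian-pvc} once \(\pi_+\) is shown to be nearly least favorable for the Gaussian experiment \(\cG_H(v+\delta)\), in the sense of \eqref{eq:near-lf}, with \(\eps_H\) satisfying \(H^6\eps_H\to0\). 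To check this, apply \eqref{eq:GT2} twice, once to \(\pi_+\) and once to a near-least-favorable Gaussian prior:
\[
 R_H^G(v+\delta)-r^G_{v+\delta}(\pi_+)\leq 2\eta_{0,n}(H),
\]
because \(\pi_+\) maximizes \(r^E_{n,v+\delta}\). Hence \(\eps_H=2\eta_{0,n}(H)\), and \(H^6\eta_{0,n}\to0\) is exactly what \eqref{eq:near-lf} requires. Concentration of \(w/v\) in \(L^2\) around \(1\) gives convergence of the second moment to one. The integrand is therefore \(1+o(1)\) uniformly on \([v,v+\delta]\), and the increment is at most \(\delta\{1+o(1)\}\).

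\emph{Lower bound.} The argument is symmetric. Let \(\pi_-\) be least favorable at \(v\). Then
\[
 R^E_{n,H}(v+\delta)-R^E_{n,H}(v)\geq r^E_{n,v+\delta}(\pi_-)-r^E_{n,v}(\pi_-)
 =\int_v^{v+\delta}\partial_ur^E_{n,u}(\pi_-)\dd u .
\]
The same chain applies: \eqref{eq:GT3} replaces the integrand by its Gaussian counterpart, the curvature bound moves the evaluation point to \(u=v\), and near least favorability of \(\pi_-\) at \(v\) (again from \eqref{eq:GT2}) together with \eqref{eq:gaussian-pvc} makes the Gaussian posterior-variance term \(1+o(1)\). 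Note that \(\E\{w/v\}^2\) is a second moment, so \(L^2\) concentration of \(w/v\) near one controls it from both sides. The two bounds combine to give \eqref{eq:differential-transfer}. Uniformity in \(v\) follows because every error term depends only on \(H\), \(\eta_{0,n}\), \(\eta_{1,n}\), \(\delta\) and the constants attached to \(\cK\).

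\emph{Main obstacle.} The delicate step is the transfer of near least favorability to the Gaussian side with the rate \(H^6\eps_H\to0\). One must check that \eqref{eq:gaussian-pvc} holds uniformly over \(v\) in the compact set and over the moving sequence of priors \(\pi_\pm\), not merely for one fixed sequence. I would rely on the appendix proof of Lemma~\ref{lem:gaussian-reference}, which should give a quantitative bound of the form \(\E\{w/v-1\}^2\lesssim H^{6}\eps_H+\) (a term vanishing as \(H\to\infty\)). A related point is the role of the hypothesis \(H^{10}\delta\to0\). I expect it to enter in that quantitative step when the endpoint is displaced: the curvature bound costs \(H^6\delta\), and this is multiplied by up to \(H^4\) from the derivative bound \eqref{eq:GT1}.
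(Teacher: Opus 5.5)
Your proof is correct, but it handles the interior of \([v_n,v_n+\delta_n]\) differently from the paper.

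\textbf{The paper's route.} The paper re-certifies least favorability at every intermediate level. By \eqref{eq:GT1}, each Bayes risk and the minimax value are nondecreasing and \(CH_n^4\)-Lipschitz in \(v\). So an endpoint prior is \(CH_n^4\delta_n\)-least favorable for \(\cE_{n,H_n}(u)\), and hence \((2\eta_{0,n}+CH_n^4\delta_n)\)-least favorable for \(\cG_{H_n}(u)\). Then \eqref{eq:gaussian-pvc} is applied afresh at each \(u\). The factor \(H^6\) in \eqref{eq:near-lf}, acting on the loss \(H^4\delta\), is exactly where the hypothesis \(H_n^{10}\delta_n\to0\) comes from.

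\textbf{Your route.} You apply \eqref{eq:gaussian-pvc} only at the endpoint where the prior is exactly least favorable for \(\cE\), with certificate error \(2\eta_{0,n}\). You then move the Gaussian derivative \(\E^G_\pi\{w_u^G/u\}^2=\partial_ur_u^G(\pi)\) across the interval using the curvature bound of Lemma~\ref{lem:gaussian-reference}, at additive cost \(C(1+H_n^6)\delta_n\). This is legitimate, because \eqref{eq:G-mmse-derivative} and the curvature bound are both stated uniformly for \(v\) in compact sets.

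\textbf{Comparison.} Your route is more economical. It needs only \(H_n^6\delta_n\to0\), and it uses \eqref{eq:GT1} merely to integrate the derivative. The paper's route buys modularity: it uses the Gaussian lemma only through its conclusion \eqref{eq:gaussian-pvc}. The price is the extra factor \(H_n^4\).

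\textbf{Points to fix in your write-up.}
\begin{itemize}
\item Your closing guess, that \(H^{10}\) arises from multiplying the curvature cost \(H^6\delta\) by the \(H^4\) of \eqref{eq:GT1}, does not describe your own argument. No such product occurs there; the \(H^{10}\) belongs to the paper's bookkeeping described above.
\item Your uniformity worry is settled by the appendix proof, which is quantitative. It gives \(\partial_vr_v^G(\pi)\le1+\{2C(1+H^6)\eps_H\}^{1/2}\), together with the van Trees lower bound behind \eqref{eq:G-minimax-limit}, both uniform on compact \(v\)-sets.
\item Minor slip: posterior variance is not pointwise bounded by prior variance. Use \(w_u^G\le H^2\), which follows from the support, to get the same crude \(H^4\) bound.
\end{itemize}
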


\begin{proof}
Let \(\pi_{n,v}\) be least favorable for \(\cE_{n,H_n}(v)\).  Since
\eqref{eq:GT2} is uniform over priors, it also compares the two minimax
values, and hence
\[
 R_{H_n}^G(v)-r_v^G(\pi_{n,v})
 \leq2\eta_{0,n}(H_n).
\]
Lemma~\ref{lem:gaussian-reference}, \eqref{eq:GT3}, and the first two
conditions in \eqref{eq:transfer-rates} give
\(\partial_vr^E_{n,v}(\pi_{n,v})=1+o(1)\).

It remains to use an endpoint prior at intermediate variance levels.
Equation \eqref{eq:GT1} implies that every Bayes risk, and therefore the
minimax value, is nondecreasing and \(C H_n^4\)-Lipschitz in \(v\).  A prior
least favorable at either endpoint is consequently
\(C H_n^4\delta_n\)-least favorable throughout the interval.  The last
condition in \eqref{eq:transfer-rates} makes this error admissible in
\eqref{eq:near-lf}.  Thus both endpoint priors have derivative \(1+o(1)\)
uniformly over the interval.  Integrating their Bayes-risk derivatives and
evaluating each minimax value under the opposite endpoint prior sandwiches
the minimax increment between two quantities equal to
\(\delta_n\{1+o(1)\}\).
\end{proof}

The following deterministic choice will be used in all non-Gaussian examples.

\begin{corollary}[Slow localization radius]
\label{cor:slow-radius}
Suppose, uniformly on \(\cK\),
\begin{align}
 \eta_{0,n}(H)&\leq C H^5n^{-1/2}+CH^2e^{-cH^2},
                                                        \label{eq:eta0-rate}\\
 \eta_{1,n}(H)&\leq C H^7n^{-1/2}+CH^4e^{-cH^2}.
                                                        \label{eq:eta1-rate}
\end{align}
For every \(\delta_n\downarrow0\), the choice
\begin{equation}
 H_n=\min\{\delta_n^{-1/20},n^{1/44}\}
 \label{eq:slow-radius}
\end{equation}
satisfies \eqref{eq:transfer-rates} and therefore
\eqref{eq:differential-transfer}.
\end{corollary}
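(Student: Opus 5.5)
We verify the three conditions in \eqref{eq:transfer-rates} directly for the choice \eqref{eq:slow-radius}, and then invoke Theorem~\ref{thm:differential-transfer}. The two properties of \(H_n\) we use are \(H_n\leq\delta_n^{-1/20}\) and \(H_n\leq n^{1/44}\). We also need \(H_n\to\infty\). This holds because both \(\delta_n^{-1/20}\to\infty\) and \(n^{1/44}\to\infty\), and the minimum of two diverging sequences diverges.

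The third condition is immediate: \(H_n^{10}\delta_n\leq\delta_n^{-1/2}\delta_n=\delta_n^{1/2}\to0\). For the first condition we insert \eqref{eq:eta0-rate} and bound
\[
 H_n^6\eta_{0,n}(H_n)\leq CH_n^{11}n^{-1/2}+CH_n^{8}e^{-cH_n^2}.
\]
The first term is at most \(Cn^{11/44-1/2}=Cn^{-1/4}\to0\). The second term tends to zero because \(H_n\to\infty\) and Gaussian decay dominates any polynomial. For the second condition, \eqref{eq:eta1-rate} gives
\[
 \eta_{1,n}(H_n)\leq CH_n^7n^{-1/2}+CH_n^4e^{-cH_n^2}\leq Cn^{7/44-1/2}+o(1)\to0.
\]

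Taken together, these estimates establish \eqref{eq:transfer-rates}. Theorem~\ref{thm:differential-transfer} then yields \eqref{eq:differential-transfer}, provided Assumption~\ref{ass:GT} holds with these error functions. We read the corollary as presupposing that assumption, or as presupposing the hypotheses of Theorem~\ref{thm:primitive-transfer}; the latter applies because \(H_n\geq1\) eventually and \(\eta_{1,n}(H_n)\to0\). The uniformity in \(v\in\cK\) carries over from the uniform constants.

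The only delicate point is bookkeeping. The exponent \(1/44\) is chosen so that the largest power, \(H^{11}\) in the first condition, is still dominated by \(n^{-1/2}\); indeed it leaves slack, since any exponent below \(1/22\) would suffice. Likewise the exponent \(1/20\) makes \(H_n^{10}\delta_n\) vanish. No step should present a real obstacle beyond checking that the exponents are correct.
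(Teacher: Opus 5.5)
Your proposal is correct and follows the paper's proof, which records only that the choice gives \(H_n\to\infty\), \(H_n^{11}/\sqrt n\to0\) and \(H_n^{10}\delta_n\to0\), with the exponential terms negligible. Your bounds \(H_n^{11}n^{-1/2}\leq n^{-1/4}\), \(H_n^{7}n^{-1/2}\leq n^{7/44-1/2}\) and \(H_n^{10}\delta_n\leq\delta_n^{1/2}\) fill in those steps.
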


\begin{proof}
The choice gives \(H_n\to\infty\),
\(H_n^{11}/\sqrt n\to0\), and
\(H_n^{10}\delta_n\to0\).  The exponential terms are negligible.
\end{proof}

The menu problem concerns the smooth local target \(\psi_{q,n}\), rather
than the identity target.  For fixed \(q\) and \(v\), write
\[
 \cR_{n,H}^{\psi}(q,v)=
 \inf_T\sup_{|h|\leq H}
 \E_{n,q,v,h}\{T-\psi_{q,n}(h)\}^2
\]
for its minimax risk.  We impose one further transfer condition.

\begin{assumption}[Smooth-target transfer]
\label{ass:ST}
Uniformly over contexts, priors, and relevant inverse-information levels,
assume \(0<b_0\leq|b(q)|\leq b_1<\infty\), uniformly bounded second
derivatives of \(\psi_q\), and, for every stated \(n,H,q,v\),
Bayes--minimax duality for the smooth target:
\[
 \cR_{n,H}^{\psi}(q,v)
 =\sup_{\pi\in\cP([-H,H])}r^{\psi}_{n,v}(\pi),
\]
where the supremum is attained.  In addition, suppose
\begin{align}
 |r^{\psi}_{n,v}(\pi)-b(q)^2r^h_{n,v}(\pi)|
 &\leq C\{H^3n^{-1/2}+H^4n^{-1}\},                     \label{eq:ST1}\\
 \{\,|b(q)|-CHn^{-1/2}\,\}_+^2\partial_vr^h_{n,v}(\pi)
 &\leq \partial_vr^{\psi}_{n,v}(\pi)
 \leq \{\,|b(q)|+CHn^{-1/2}\,\}^2\partial_vr^h_{n,v}(\pi).
                                                               \label{eq:ST2}
\end{align}
\end{assumption}

The risk comparison \eqref{eq:ST1} follows from Taylor's theorem and
conditional variance.  Condition \eqref{eq:ST2} is stronger: the Dirichlet
form that differentiates Bayes risk must be generated by posterior slopes.
Section S5 of the Supplementary Material shows that it follows from monotone
edge likelihood ratios and the stated target smoothness.  It holds in each
model considered in Section~\ref{sec:models}.

We now return to the context-indexed action family.  Assume that its local
experiments form a continuous Blackwell chain in inverse information:
\begin{equation}
 v_\star(q)\leq v(q,a),\qquad
 \cE_q(v_1)\succeq_B\cE_q(v_2)\quad\text{if }v_1\leq v_2,
 \label{eq:blackwell-chain}
\end{equation}
and every intermediate inverse-information level in a fixed neighborhood of
\(v_\star(q)\) is available.  For a menu,
\(v_V(q)=\min_{a\in V}v(q,a)\).  In this coordinate, the frontier in
\eqref{eq:Ak} is
\begin{equation}
 A_k=\inf_{1\leq |V|\leq k}\sup_{q\in\cQ}
 b(q)^2\{v_V(q)-v_\star(q)\}.
 \label{eq:Ak-v}
\end{equation}
With this notation, the finite-neighborhood menu frontier is
\begin{equation}
 F_{n,k}(H)=\inf_{1\leq |V|\leq k}\sup_{q\in\cQ}
 \left\{\cR_{n,H}^{\psi}(q,v_V(q))
       -\cR_{n,H}^{\psi}(q,v_\star(q))\right\}.
 \label{eq:finite-frontier}
\end{equation}
Thus the two finite-neighborhood risks are compared at the same context
before taking the context supremum.

\begin{theorem}[Joint sample-menu law]
\label{thm:joint-law}
Let \(\cQ\) be compact, suppose \eqref{eq:blackwell-chain} holds, and assume
that, for some \(\bar\delta>0\), the intervals
\([v_\star(q),v_\star(q)+\bar\delta]\) lie in a common compact subset of
\((0,\infty)\).  Assume Assumptions~\ref{ass:GT} and \ref{ass:ST} uniformly
over all contexts and inverse-information levels in these intervals.  Suppose
\(A_k\to0\), and that the infimum in \eqref{eq:Ak-v} is attained or
approached with relative error \(o(1)\).  For any \(k_n\to\infty\) such that
\(A_{k_n}>0\), let \(H_n\) satisfy
\begin{equation}
 H_n\to\infty,\qquad
 \frac{H_n^{11}}{\sqrt n}\to0,\qquad
 H_n^{10}A_{k_n}\to0.
 \label{eq:joint-radius-rates}
\end{equation}
If \eqref{eq:eta0-rate}--\eqref{eq:eta1-rate} hold, then
\begin{equation}
 F_{n,k_n}(H_n)=A_{k_n}\{1+o(1)\},
 \label{eq:joint-law}
\end{equation}
where \(F_{n,k}\) is defined in \eqref{eq:finite-frontier}.  The
conclusion has no additional sample-size/menu-size restriction.  Conditions
\eqref{eq:joint-radius-rates} can always be met; one explicit choice is
\begin{equation}
 H_n=\min\{A_{k_n}^{-1/20},n^{1/44}\}.
 \label{eq:joint-radius}
\end{equation}
\end{theorem}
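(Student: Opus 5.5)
The plan is to reduce the joint law, context by context, to the identity-target differential transfer of Theorem~\ref{thm:differential-transfer}, with increment \(\delta=v_V(q)-v_\star(q)\). Then Assumption~\ref{ass:ST} converts the identity-target increment into the smooth-target increment with factor \(b(q)^2\{1+o(1)\}\). The key observation is that the minimax increment at context \(q\) should equal \(b(q)^2\{v_V(q)-v_\star(q)\}\{1+o(1)\}\), with the \(o(1)\) uniform over \(q\) and over all increments \(\delta\in(0,\delta_n]\) with \(\delta_n\) comparable to \(A_{k_n}\). Once this holds, taking \(\sup_q\) and \(\inf_V\) in \eqref{eq:finite-frontier} and comparing with \eqref{eq:Ak-v} gives \eqref{eq:joint-law}.

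\textbf{Radius and relevant menus.} First, \eqref{eq:joint-radius} satisfies \eqref{eq:joint-radius-rates}, by the computation in Corollary~\ref{cor:slow-radius} with \(\delta_n=A_{k_n}\). The bounds \eqref{eq:eta0-rate}--\eqref{eq:eta1-rate} then give \(H_n^6\eta_{0,n}(H_n)\to0\) and \(\eta_{1,n}(H_n)\to0\).

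For the upper bound on \(F_{n,k_n}\), take a near-optimal menu \(V_n\) for \eqref{eq:Ak-v}, with worst-context excess at most \(A_{k_n}\{1+o(1)\}\). Since \(|b(q)|\geq b_0\), we get \(\delta(q):=v_{V_n}(q)-v_\star(q)\leq 2A_{k_n}/b_0^2\to0\). So for large \(n\) every context's increment lies in \([0,\bar\delta]\), and \(H_n^{10}\sup_q\delta(q)\to0\).

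For the lower bound, an arbitrary menu may have large \(\delta(q)\). We handle this by monotonicity: by \eqref{eq:GT1} and the lower half of \eqref{eq:ST2}, the smooth minimax risk is nondecreasing in \(v\), and every intermediate level is available. Contexts with \(\delta(q)\) beyond a multiple of \(A_{k_n}\) therefore already contribute at least the increment at level \(v_\star(q)+CA_{k_n}\), and that increment exceeds \(A_{k_n}\) by the uniform estimate below. This lets us truncate \(\delta\) without loss.

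\textbf{Uniform smooth-target increment.} Fix \(q\) and \(\delta\leq\delta_n:=CA_{k_n}\). I rerun the proof of Theorem~\ref{thm:differential-transfer} with \(\cR^\psi\) in place of \(R^E\).
\begin{enumerate}
\item Let \(\pi\) be least favorable for the smooth target at \(v_\star(q)\) or at \(v_\star(q)+\delta\). Bayes--minimax duality in Assumption~\ref{ass:ST} and \eqref{eq:ST1} show that \(\pi\) is near least favorable for the identity target. The error is \(O(H_n^3n^{-1/2})/b_0^2\), and \(H_n^6\) times it tends to zero since \(H_n^{9}/\sqrt n\to0\).
\item Combining this with \eqref{eq:GT2}, \(\pi\) is \(\eps_H\)-least favorable in the Gaussian experiment with \(H^6\eps_H\to0\). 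This also holds at intermediate levels, because all Bayes risks are \(CH_n^4\)-Lipschitz by \eqref{eq:GT1} and \eqref{eq:ST2}, and \(H_n^{10}\delta_n\to0\).
\item Lemma~\ref{lem:gaussian-reference} then gives \eqref{eq:gaussian-pvc}, and \eqref{eq:GT3} yields \(\partial_v r^h_{n,v}(\pi)=1+o(1)\) uniformly on the interval.
\item By \eqref{eq:ST2}, \(\partial_v r^\psi_{n,v}(\pi)=b(q)^2\{1+o(1)\}\), since \(H_n n^{-1/2}\to0\) and \(|b|\geq b_0\).
\end{enumerate}
The sandwich then follows. The increment \(\cR^\psi(v_\star+\delta)-\cR^\psi(v_\star)\) is at least \(r^\psi_{v_\star+\delta}(\pi_0)-r^\psi_{v_\star}(\pi_0)\) for the prior \(\pi_0\) least favorable at \(v_\star\), and at most the same difference for the prior least favorable at \(v_\star+\delta\). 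Integrating the derivatives, both bounds equal \(b(q)^2\delta\{1+o(1)\}\). All constants are uniform in \(q\) by hypothesis, so the \(o(1)\) is uniform.

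\textbf{Main obstacle.} The delicate step is item 1: transferring near-least-favorability from the smooth target to the identity target. The additive error in \eqref{eq:ST1} must be small relative to the \(H^{-6}\) tolerance required by \eqref{eq:near-lf}, not relative to \(A_{k_n}\). This is exactly why no direct \(n\)--\(k_n\) relation is needed. All additive errors enter only through the concentration \eqref{eq:gaussian-pvc}, and the increment itself is obtained multiplicatively from derivatives over an interval of length \(\delta\). I would check carefully that \eqref{eq:ST1}'s \(H^4n^{-1}\) term and the Lipschitz drift \(H_n^4\delta_n\) both fit within \eqref{eq:near-lf} under \eqref{eq:joint-radius-rates}.

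Finally, the sup over \(q\) and inf over menus commute with a uniform relative error, and the near-attainment of \eqref{eq:Ak-v} gives \eqref{eq:joint-law}.
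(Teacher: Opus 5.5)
Your proposal is correct and follows the paper's proof essentially step for step. You make a smooth-target least-favorable prior $O(H_n^3/\sqrt n+H_n^4/n)$-least favorable for the identity target via \eqref{eq:ST1}, then use the Gaussian posterior-variance certificate, \eqref{eq:ST2}, and the endpoint-prior sandwich to get the uniform increment $b(q)^2\delta\{1+o(1)\}$ for gaps $\delta\le CA_{k_n}$. As in the paper, a near-optimal menu then gives the upper bound, and truncation to an intermediate chain level gives the lower bound.

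One small correction concerns menu gaps exceeding $\bar\delta$, including the infinite-variance qubit endpoint. There the derivative bounds \eqref{eq:GT1} and \eqref{eq:ST2} are not assumed, so they cannot give monotonicity. The comparison with the intermediate level must instead come from Blackwell monotonicity along the chain \eqref{eq:blackwell-chain}, which is exactly what the paper invokes.
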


\begin{proof}
First consider a gap \(0\leq\delta\leq CA_{k_n}\).  By
Assumption~\ref{ass:ST}, take an attaining least-favorable prior for the
smooth target.  Equation~\eqref{eq:ST1} makes it
\(C\{H_n^3/\sqrt n+H_n^4/n\}\) of least favorable for the identity target,
after division by \(b(q)^2\).  Multiplication by \(H_n^6\) sends this
certificate error to zero under \eqref{eq:joint-radius-rates}: eventually
\(H_n\geq1\), so both \(H_n^9/\sqrt n\) and \(H_n^{10}/n\) vanish.  The
argument in the proof of
Theorem~\ref{thm:differential-transfer} therefore gives identity-target
posterior-variance concentration for the endpoint priors of the smooth-target
problem.  Equation \eqref{eq:ST2} and the same endpoint-prior sandwich yield,
uniformly in \(q\),
\begin{equation}
 \cR_{n,H_n}^{\psi}(q,v+\delta)
 -\cR_{n,H_n}^{\psi}(q,v)
 =b(q)^2\delta\{1+o(1)\}.
 \label{eq:smooth-differential}
\end{equation}
Conditions \eqref{eq:joint-radius-rates} imply the transfer conditions
uniformly for these gaps: the error bounds give
\(H_n^6\eta_{0,n}(H_n)\to0\) and
\(\eta_{1,n}(H_n)\to0\), while
\(H_n^{10}\delta\to0\) for \(\delta\leq C A_{k_n}\).  Since
\(A_{k_n}\to0\) and \(|b(q)|\geq b_0\), all gaps used in
the upper bound and all intermediate gaps used in the lower bound eventually
belong to the local interval of Theorem~\ref{thm:joint-law}.

For the upper bound, take a menu whose worst distortion is
\(A_{k_n}\{1+o(1)\}\) and apply \eqref{eq:smooth-differential} at each
context.  For the lower bound, fix a menu.  For any deterministic
\(\eps_n\downarrow0\), some context has distortion at least
\((1-\eps_n)A_{k_n}\).  If the menu gap at that context is larger than the
corresponding value, insert on the continuous chain an intermediate
experiment having exactly that inverse-information gap.  Blackwell
monotonicity places the menu risk above the intermediate risk, and
\eqref{eq:smooth-differential} gives
\((1-\eps_n)A_{k_n}\{1+o(1)\}\).  Taking the menu infimum and then sending
\(\eps_n\) to zero proves \eqref{eq:joint-law}.
\end{proof}

\begin{corollary}[Joint law under primitive conditions]
\label{cor:primitive-joint}
Suppose the conditions of Theorem~\ref{thm:joint-law} concerning the compact
context space, the Blackwell chain, the frontier, and relative menu
approximation hold.  For the identity target, assume compact Bayes--minimax
duality with attainment, the posterior Dirichlet identity in
Proposition~\ref{prop:posterior-dirichlet}, and PLG1--PLG4 uniformly over
contexts and the relevant inverse-information levels.  Suppose the right-hand
sides of \eqref{eq:primitive-eta0}--\eqref{eq:primitive-eta1} satisfy
\eqref{eq:eta0-rate}--\eqref{eq:eta1-rate}.  For the smooth target, assume
compact Bayes--minimax duality with attainment, uniformly bounded second
derivatives, and \(0<b_0\leq|b(q)|\leq b_1<\infty\).  Assume also that the
active-edge likelihood ratios of the jump generator are monotone in the local
parameter.  Then, for every \(k_n\to\infty\) with
\(A_{k_n}>0\), every \(H_n\) satisfying
\eqref{eq:joint-radius-rates} obeys
\[
 F_{n,k_n}(H_n)=A_{k_n}\{1+o(1)\}.
\]
In particular, the choice in \eqref{eq:joint-radius} is admissible.
\end{corollary}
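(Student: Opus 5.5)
The plan is to reduce Corollary~\ref{cor:primitive-joint} to Theorem~\ref{thm:joint-law} by checking that its primitive hypotheses imply Assumptions~\ref{ass:GT} and~\ref{ass:ST} uniformly over contexts and over the relevant inverse-information levels, together with the error rates \eqref{eq:eta0-rate}--\eqref{eq:eta1-rate}. Once these are in place, Theorem~\ref{thm:joint-law} gives $F_{n,k_n}(H_n)=A_{k_n}\{1+o(1)\}$ for every $H_n$ satisfying \eqref{eq:joint-radius-rates}. Its last assertion then shows that the choice \eqref{eq:joint-radius} is admissible: $H_n\to\infty$ because $A_{k_n}\to0$ and $n\to\infty$, while $H_n^{11}/\sqrt n\le n^{-1/4}$ and $H_n^{10}A_{k_n}\le A_{k_n}^{1/2}$.

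\textbf{Assumption~\ref{ass:GT}.} The first step applies Theorem~\ref{thm:primitive-transfer} at each context. Under PLG1--PLG4, compact Bayes--minimax duality, and Proposition~\ref{prop:posterior-dirichlet}, it gives \eqref{eq:GT2}--\eqref{eq:GT3} for all priors, with $\eta_{0,n},\eta_{1,n}$ bounded by \eqref{eq:primitive-eta0}--\eqref{eq:primitive-eta1}. By hypothesis these right-hand sides obey \eqref{eq:eta0-rate}--\eqref{eq:eta1-rate}. Along any $H_n$ with $H_n^{11}/\sqrt n\to0$ we get $\eta_{1,n}(H_n)\le CH_n^7n^{-1/2}+CH_n^4e^{-cH_n^2}\to0$, and $H_n$ is eventually at least one. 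The final clause of Theorem~\ref{thm:primitive-transfer} therefore yields \eqref{eq:GT1} after enlarging $C_\cK$, so Assumption~\ref{ass:GT} holds. Uniformity in $q$ comes from the assumed uniformity of the PLG constants, because the constants $C$ in Theorem~\ref{thm:primitive-transfer} depend only on those constants and on $\cK$.

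\textbf{Assumption~\ref{ass:ST}.} Bayes--minimax duality for the smooth target and the bounds on $b(q)$ and $\psi_q''$ are assumed directly. For \eqref{eq:ST1}, I will Taylor expand $\psi_{q,n}(h)=b(q)h+R_n(h)$ with $|R_n(h)|\le CH^2n^{-1/2}$ on $[-H,H]$. The posterior variance of $\psi_{q,n}$ differs from $b(q)^2$ times that of $h$ by a cross term $2b\Cov(h,R_n\mid Y)\le C H\cdot H^2n^{-1/2}$ plus $\Var(R_n\mid Y)\le CH^4n^{-1}$, which gives the stated bound. For \eqref{eq:ST2}, I will use the jump Dirichlet identity \eqref{eq:jump-dirichlet} with $g=\psi_{q,n}$ and $g=h$. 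Each edge contributes $\{m^\psi(z)-m^\psi(x)\}^2$. By the mean value theorem, $\psi_{q,n}(h)-\psi_{q,n}(h')$ equals $(h-h')$ times a slope in $[|b|-CHn^{-1/2},|b|+CHn^{-1/2}]$.

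The main obstacle is converting this slope bound on the parameter into a bound on the difference of posterior means, $m^\psi(z)-m^\psi(x)$ against $m^h(z)-m^h(x)$. Here monotone active-edge likelihood ratios enter. Moving from $x$ to $z$ tilts the posterior by a monotone likelihood ratio, so the posterior laws at $x$ and $z$ are stochastically ordered. I would then write $m^g(z)-m^g(x)=\int\{F_x(t)-F_z(t)\}\,dg(t)$, where $F_x,F_z$ are the posterior distribution functions. The integrand has constant sign, so $dg=\psi_{q,n}'\,dt$ with $\psi_{q,n}'$ pinched between $|b|\mp CHn^{-1/2}$ squeezes the $\psi$-difference between those constants times the $h$-difference. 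Squaring and summing over edges gives \eqref{eq:ST2}, with the positive part handling the case where the lower slope bound is negative. This step is exactly what the text says is shown in Section S5, so I would follow that edge-by-edge argument and check that the absolute-convergence and support conditions in Proposition~\ref{prop:posterior-dirichlet} hold uniformly in $q$.

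With both assumptions verified uniformly, and the remaining hypotheses of Theorem~\ref{thm:joint-law} assumed verbatim, the conclusion follows.
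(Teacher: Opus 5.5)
Your proposal is correct and follows the paper's proof. Theorem~\ref{thm:primitive-transfer} supplies Assumption~\ref{ass:GT} with the required rates, Taylor expansion with conditional variance gives \eqref{eq:ST1}, monotone edge likelihood ratios give \eqref{eq:ST2}, and Theorem~\ref{thm:joint-law} concludes. The only variation is in the \eqref{eq:ST2} step: you express the edge-wise ratio of posterior-mean increments as a $|F_x-F_z|$-weighted average of $\psi_{q,n}'$, using likelihood-ratio stochastic ordering, whereas the paper's monotone-tilt lemma writes it as a convex average of secant slopes by symmetrizing two independent draws from the common base law; both place the ratio in $[b-CHn^{-1/2},\,b+CHn^{-1/2}]$ (write $b$ rather than $|b|$ there, and pass to absolute values after squaring).
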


\begin{proof}
Theorem~\ref{thm:primitive-transfer} verifies
Assumption~\ref{ass:GT} with the required error rates.  For jump generators,
the monotone-tilt secant argument in Section S5 of the Supplementary Material
gives \eqref{eq:ST2}; Taylor expansion and conditional variance give
\eqref{eq:ST1}.  The result now follows from
Theorem~\ref{thm:joint-law}.
\end{proof}

\begin{remark}[Zero frontier]
\label{rem:zero-frontier}
If \(A_k=0\) as an infimum, then \(F_{n,k}(H)=0\) as an infimum for fixed
\(n,H\).  Indeed, \eqref{eq:GT1} and \eqref{eq:ST2} give a finite local
Lipschitz constant in \(v\), and menus with arbitrarily small distortion
give arbitrarily small finite-sample excess.  An oracle-exact finite menu is
asserted only when the zero frontier is attained.
\end{remark}

\begin{remark}[Why LAN is insufficient]
\label{rem:lan-insufficient}
Theorem~\ref{thm:joint-law} permits \(A_{k_n}\) to vanish faster than any
prespecified polynomial rate.  An absolute LAN approximation cannot then be
divided by \(A_{k_n}\).  Assumption~\ref{ass:GT} supplies a differentiated,
all-prior approximation on the relevant least-favorable priors.  It is an
additional model property, not a reformulation of LAN.
\end{remark}

\begin{proposition}[A counterexample based on pointwise Gaussian limits]
\label{prop:lan-witness}
Fix \(v_0>0\), \(0<\gamma<1\), and \(a_n\downarrow0\).  On a compact
neighborhood of \(v_0\), define
\[
 \widetilde v_n(v)
 =v+\gamma a_n\tanh\{(v-v_0)/a_n\},
 \qquad
 Y_{n,v}=h+\sqrt{\widetilde v_n(v)}\,Z,
 \quad h\in\R.
\]
Then \(v\mapsto\widetilde v_n(v)\) is increasing, and for every fixed \(v\)
the experiment converges, uniformly over translations \(h\), to the Gaussian
location experiment of variance \(v\).  Nevertheless, for every
\(\delta_n>0\) with \(\delta_n/a_n\to0\),
\begin{equation}
 \frac{R_{n,\infty}(v_0+\delta_n)-R_{n,\infty}(v_0)}{\delta_n}
 \longrightarrow1+\gamma,
 \label{eq:lan-witness}
\end{equation}
where \(R_{n,\infty}(v)\) is the unrestricted Gaussian-location minimax risk.
The same conclusion holds for the minimax risk \(R_{n,H_n}(v)\) on
\([-H_n,H_n]\) whenever \(H_n\to\infty\) and
\(H_n^{-2}=o(\delta_n)\).
\end{proposition}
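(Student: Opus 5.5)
The plan is a direct computation, using the fact that the unrestricted Gaussian location minimax risk equals the noise variance. For \(h\in\R\) and squared error, \(R_{n,\infty}(v)=\widetilde v_n(v)\), since \(T=Y\) is minimax; the lower bound comes from flat priors, or from the van Trees bound of Lemma~\ref{lem:gaussian-reference} with \(H\to\infty\). The ratio in \eqref{eq:lan-witness} is then
\[
 \frac{\widetilde v_n(v_0+\delta_n)-\widetilde v_n(v_0)}{\delta_n}
 =1+\gamma\,\frac{\tanh(\delta_n/a_n)}{\delta_n/a_n}\longrightarrow1+\gamma,
\]
because \(\tanh x/x\to1\) as \(x\to0\) and \(\delta_n/a_n\to0\).

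Monotonicity follows from
\[
 \widetilde v_n'(v)=1+\gamma\operatorname{sech}^2\{(v-v_0)/a_n\}\in[1,1+\gamma].
\]
Pointwise convergence follows from \(|\widetilde v_n(v)-v|\leq\gamma a_n\to0\). Since the two experiments differ only in their variances, the total variation between \(N(h,\widetilde v_n)\) and \(N(h,v)\) does not depend on \(h\), and it tends to zero. This gives convergence uniformly over translations, and even a deficiency bound. The point to emphasize is that these statements hold at each fixed \(v\), yet the derivative in \(v\) at \(v_0\) is \(1+\gamma\) rather than \(1\).

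For the bounded radius, I will bracket \(R_{n,H_n}\) by the Lemma~\ref{lem:gaussian-reference} facts for \(R^G_{H}(v)\), evaluated at the variance \(\widetilde v_n(v)\). The upper bound is \(R^G_H(\tilde v)\leq\tilde v\). The lower bound is the van Trees estimate \(R^G_H(\tilde v)\geq\{\tilde v^{-1}+\pi^2/H^2\}^{-1}\), which lies within \(C\tilde v^2/H^2\) of \(\tilde v\). Hence
\[
 |R_{n,H_n}(v)-\widetilde v_n(v)|\leq C H_n^{-2}
\]
uniformly on the compact neighborhood. The increment of \(R_{n,H_n}\) therefore equals \(\widetilde v_n(v_0+\delta_n)-\widetilde v_n(v_0)+O(H_n^{-2})\). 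The hypothesis \(H_n^{-2}=o(\delta_n)\) turns the error into \(o(\delta_n)\), and the displayed limit gives \(1+\gamma\) again.

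The main obstacle is making the \(O(H^{-2})\) deviation uniform in the variance. I would check that the cosine-prior van Trees constant is explicit, \(\pi^2\), so the bound \(\tilde v-\{\tilde v^{-1}+\pi^2H^{-2}\}^{-1}\leq\pi^2\tilde v^2H^{-2}\) is uniform for \(\tilde v\) in a compact set. Everything else is elementary.
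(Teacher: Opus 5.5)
Your proposal is correct and follows essentially the same route as the paper's proof. Both arguments use the \(\operatorname{sech}^2\) derivative for monotonicity, the bound \(|\widetilde v_n(v)-v|\leq\gamma a_n\) with translation invariance for the pointwise limit, and the identity \(R_{n,\infty}(v)=\widetilde v_n(v)\), which gives the ratio \(1+\gamma\tanh(\delta_n/a_n)/(\delta_n/a_n)\). The bounded case also matches the paper: the sandwich \(\widetilde v_n(v)-CH_n^{-2}\leq R_{n,H_n}(v)\leq\widetilde v_n(v)\) comes from the Gaussian estimator and the cosine-prior van Trees bound. Your explicit constant \(\pi^2\widetilde v_n(v)^2\) simply makes the paper's uniform \(C\) concrete.
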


\begin{proof}
The derivative of \(\widetilde v_n\) is
\(1+\gamma\operatorname{sech}^2\{(v-v_0)/a_n\}>0\).
For fixed \(v\), \(|\widetilde v_n(v)-v|\leq\gamma a_n\), so Gaussian
total variation tends to zero; translation invariance makes the comparison
uniform in \(h\).  Under squared-error loss on \(\R\), the minimax risk of a
Gaussian location experiment equals its noise variance.  Hence the left-hand
side of \eqref{eq:lan-witness} is
\[
 1+\gamma\frac{a_n}{\delta_n}
       \tanh(\delta_n/a_n)\longrightarrow1+\gamma.
\]
For the bounded problem, the Gaussian estimator and the cosine-prior van
Trees bound give, uniformly for the variances in this compact neighborhood,
\[
 \widetilde v_n(v)-C H_n^{-2}
 \leq R_{n,H_n}(v)\leq\widetilde v_n(v).
\]
The bounded-risk increment therefore differs from the corresponding
effective-variance increment by \(O(H_n^{-2})=o(\delta_n)\).
\end{proof}

The counterexample does not contradict LAN: it shows that a limiting inverse-
information coordinate can be miscalibrated on the shrinking
\(\delta_n\)-scale while being correct at every fixed level.  Assumption
\ref{ass:GT} rules out precisely this loss of differential calibration.

\section{Verification in ordered experiment families}
\label{sec:models}

We next verify the abstract conditions in four models.  The Gaussian family is
exact.  The binary and Poisson arguments compare posterior distributions
uniformly over all priors on the local interval.  The negative-binomial model
is obtained from Theorem~\ref{thm:primitive-transfer}.  The full local-limit and
primitive-condition proofs are given in Sections S3--S5 of the Supplementary
Material.

\begin{theorem}[Transfer in four ordered families]
\label{thm:model-transfer}
The following statements hold uniformly when the fixed model parameters and
the inverse-information coordinate range over compact interior sets.
\begin{enumerate}
\item In the Gaussian experiment \eqref{eq:gaussian-reference},
Assumption~\ref{ass:GT} holds with zero comparison errors.
\item Fix \(t_0\in(-1,1)\).  In the binary experiment
\begin{equation}
 \Pp_{h,c}(Y=y)=\frac{1+yc(t_0+h/\sqrt n)}2,
 \qquad y\in\{-1,1\},
 \label{eq:binary-model}
\end{equation}
for \(c>0\), put \(x=c^{-2}\) and \(v=x-t_0^2=I_c(t_0)^{-1}\).
If \(1\leq H=o(n^{1/6})\), Assumption~\ref{ass:GT} holds with
\begin{align}
 \eta_{0,n}(H)&\leq C H^5n^{-1/2}+CH^2e^{-c_0H^2},       \label{eq:binary-risk-transfer}\\
 \eta_{1,n}(H)&\leq C H^7n^{-1/2}+CH^4e^{-c_0H^2}.
                                                               \label{eq:binary-deriv-transfer}
\end{align}
\item Fix \(\theta_0>0\).  In the Poisson experiment
\[
 Y_i\sim\operatorname{Poisson}\{(\theta_0+h/\sqrt n)/x\},
 \qquad |h|\leq H,
\]
put \(v=x\theta_0=I_x(\theta_0)^{-1}\).  If
\(1\leq H=o(n^{1/6})\),
Assumption~\ref{ass:GT} holds with the same orders as
\eqref{eq:binary-risk-transfer}--\eqref{eq:binary-deriv-transfer}.
\item Fix \(\theta_0,r>0\).  Let \(Y_1,\ldots,Y_n\) be independent
negative-binomial observations with shape \(r\) and probability generating
function
\[
 \E_{h,x}z^{Y_i}
 =\left\{\frac{xr}{xr+(\theta_0+h/\sqrt n)(1-z)}\right\}^{r},
 \qquad x\geq1.
\]
Thus \(Y_i\) has mean \((\theta_0+h/\sqrt n)/x\).  Put
\(v=x\theta_0+\theta_0^2/r\).  If \(1\leq H=o(n^{1/6})\),
Assumption~\ref{ass:GT} holds with the same orders as
\eqref{eq:binary-risk-transfer}--\eqref{eq:binary-deriv-transfer}.
\end{enumerate}
For every twice continuously differentiable scalar target with derivative
bounded away from zero, all four models also satisfy
Assumption~\ref{ass:ST}.
\end{theorem}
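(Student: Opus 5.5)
I will treat the four families separately and, for every non-Gaussian case, verify Assumption~\ref{ass:GT} through Theorem~\ref{thm:primitive-transfer}.

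\emph{Gaussian family.} Part~1 is immediate. Take \(P_{n,v,h}\) to be the Gaussian experiment \eqref{eq:gaussian-reference} itself; then \(r^E_{n,v}=r^G_v\), so \(\eta_{0,n}=\eta_{1,n}=0\). Duality with attainment follows because \([-H,H]\) is compact, the loss is bounded on it, and the risk is weakly continuous in the prior. Identity \eqref{eq:G-mmse-derivative} gives \(0\le\partial_v r^G_v(\pi)\le \E(w/v)^2\). Since \(w\le H^2\) and \(v\) ranges over a compact set, this is at most \(C_{\cK}H^4\), which proves \eqref{eq:GT1}.

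\emph{Binary, Poisson and negative-binomial families.} For each model I first identify the degradation semigroup in \(x\):
\begin{itemize}
\item binary attenuation, a symmetric channel whose coordinate flips act on the sufficient count;
\item Poisson, binomial thinning, with pure-death rates \(q(j,j-1)=j\) after a logarithmic reparametrisation of \(x\);
\item negative binomial, thinning of the mixed-Poisson representation; the gamma mixing produces the additive shift \(\theta_0^2/r\) in \(v\).
\end{itemize}
These give the jump-generator case of Proposition~\ref{prop:posterior-dirichlet}. The positivity condition on \(\mu_s\) holds because all supports are full lattices.

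I then work with the sufficient statistic \(S_n=\sum_i Y_i\), standardised as \(\sqrt n(S_n/n-\text{mean at }h=0)\) and rescaled so that its Gaussian embedding has variance \(v\). I check PLG1--PLG4 as follows.
\begin{enumerate}
\item A multiplicative local likelihood comparison with the Gaussian density on the lattice window \(|s|\le CH\). This comes from a uniform Edgeworth/Stirling expansion of the exact pmf, uniformly over \(|h|\le H\). The third-cumulant term gives relative error \(\eps_n\lesssim H^3n^{-1/2}\) while \(H=o(n^{1/6})\). Lattice quadrature errors \(\kappa_n\) are \(O(n^{-1/2})\).
\item Tail bounds \(\tau_n,b_n\) of order \(e^{-cH^2}\), from Chernoff bounds for the sum.
\item The edge-score expansion and Fisher calibration for the generator. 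The jump rates multiplied by the squared lattice spacing \(n^{-1/2}\) must reproduce, to relative order \(H n^{-1/2}\), the Gaussian Dirichlet form \(\E|\nabla m|^2\). This is where \(v=I^{-1}\) must match the generator's time scale exactly, and it is done by direct computation of \(\partial_x\) of the exact pmf.
\item Energy tails \(\chi_n,\tau^\Gamma_n\), again exponentially small.
\end{enumerate}
Inserting these into \eqref{eq:primitive-eta0}--\eqref{eq:primitive-eta1} gives \(\eta_0\lesssim H^2\cdot H^3n^{-1/2}+H^2e^{-cH^2}\) and \(\eta_1\lesssim H^4\cdot H^3n^{-1/2}+H^4e^{-cH^2}\). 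These are exactly \eqref{eq:binary-risk-transfer}--\eqref{eq:binary-deriv-transfer}. Duality holds by compactness in each case.

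\emph{Smooth targets.} For Assumption~\ref{ass:ST}, \eqref{eq:ST1} follows by Taylor expansion:
\[
\psi_{q,n}(h)=b h+O(H^2n^{-1/2}),
\]
together with the conditional-variance identity. The cross term is bounded by Cauchy--Schwarz, giving \(H\cdot H^2n^{-1/2}\). For \eqref{eq:ST2}, write the Dirichlet form of the smooth target through posterior slopes \(m^\psi(z)-m^\psi(x)\). Monotone likelihood ratios along active edges (in all four models the edge likelihood ratio is monotone in \(h\)) make the posterior at \(z\) a stochastic tilt of that at \(x\). The secant of \(\psi_{q,n}\) over the tilt then lies between \(|b|\pm CHn^{-1/2}\), which gives the stated two-sided comparison. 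For the Gaussian model the same argument uses the heat form.

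\emph{Main obstacle.} I expect the hardest step to be the Fisher calibration of the generator with a uniform relative error, particularly for the negative binomial. There the thinning degradation does not keep the law exactly inside the negative-binomial family unless the mixing is handled jointly. I would first try to verify the calibration on the compound-Poisson representation and then integrate out the gamma mixing.
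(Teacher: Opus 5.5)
Your outline follows the paper's own route. The Gaussian case is exact; the binary, Poisson and negative-binomial families are handled by checking PLG1--PLG4 on the total count and feeding the rates into Theorem~\ref{thm:primitive-transfer}, as in Section S5 of the Supplementary Material; and Assumption~\ref{ass:ST} comes from Taylor expansion plus the monotone-tilt secant-slope argument. Two corrections, neither of which changes the conclusion: \(\chi_n\) is the central edge remainder of order \(H^5n^{-1/2}+H^6n^{-1}\) (from \(|\alpha_e|=O(n^{-1/2})\), \(\omega_e(H)=O(H^2/n)\) and total rate \(O(n)\)), not an exponentially small tail, though it is still dominated by \(H^7n^{-1/2}\); and the negative-binomial ``obstacle'' does not exist, because binomial thinning with retention \(x_1/x_2\) maps the shape-\(r\) law at level \(x_1\) exactly to level \(x_2\), so the calibration is the direct total-count computation \(\alpha=-xr/\{\sqrt n\,\theta_0(xr+\theta_0)\}\), \(q_v(j_0,j_0-1)\alpha^2=v^{-2}\) at \(j_0=n\theta_0/x\), with no latent gamma mixing involved.
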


We give the main identities behind the theorem.  In the binary model, let
\(\mu_J\) and \(w_J\) be the posterior mean and variance of \(h\) after
\(n-1\) observations.  Differentiating Bayes risk along the binary symmetric
channel gives the exact formula
\[
 \partial_v r^B_{n,v}(\pi)
 =\E_\pi\left[
   \frac{w_J}{x-(t_0+\mu_J/\sqrt n)^2}
 \right]^2.
\]
The unbiased local statistic
\[
 T_{n,x}(J)=\frac{2J-n}{c\sqrt n}-\sqrt n\,t_0
\]
has limiting Gaussian variance \(v=x-t_0^2\).  A third-order expansion of
the exact binomial likelihood ratio, followed by a uniform lattice local
limit, compares the binary and Gaussian posteriors multiplicatively on
\(|T_{n,x}|\leq C H\).  Exponential tails and a rectangle-rule bound then
give \eqref{eq:binary-risk-transfer}--\eqref{eq:binary-deriv-transfer}.  The
comparison is made after mixing over an arbitrary prior, which is why it
verifies the all-prior requirement in Assumption~\ref{ass:GT}.  The
one-dimensional local limit may alternatively be obtained from the
multinomial-normal comparison of \citet{Carter2002}.

For the Poisson model, the total count \(J=\sum_iY_i\) is sufficient.
With \(\mu_J,w_J\) denoting its posterior mean and variance, differentiation
along the thinning semigroup gives
\[
 \partial_v r^P_{n,v}(\pi)
 =\E_\pi\left[
   \frac{w_J}{v\sqrt{1+\mu_J/(\theta_0\sqrt n)}}
 \right]^2.
\]
The local statistic \(T_{n,x}(J)=(xJ-n\theta_0)/\sqrt n\) has variance
\(v=x\theta_0\) at \(h=0\).  Stirling expansion, uniform Poisson tails, and a
half-lattice rectangle rule yield the same posterior-risk and derivative
orders as in the binary experiment.

For the negative-binomial model, the total count is again sufficient and
binomial thinning gives a parameter-free pure-death degradation.  The
normalized count \(T_{n,x}=(xJ-n\theta_0)/\sqrt n\) has variance
\(v=x\theta_0+\theta_0^2/r\).  Its multiplicative local Gaussian expansion,
edge-score calibration, and weighted energy-tail bound verify the primitive
conditions of Theorem~\ref{thm:primitive-transfer}; see Section S5 of the
Supplementary Material.

For a smooth target \(g=\psi_{q,n}\), clipping decisions to the compact range
of \(g\) leaves the squared-error risk unchanged.  The four experiments are
dominated, their likelihoods are continuous in the local parameter in
\(L^1\), and the local parameter interval is compact.  The compact statistical
minimax theorem therefore gives Bayes--minimax duality.  Bayes risk is upper
semicontinuous on the weakly compact space of priors, so its supremum is
attained.  This verifies the first requirement in
Assumption~\ref{ass:ST}.

The differentiated Bayes risks in the four models are quadratic forms in
adjacent posterior means or in a posterior covariance.  If \(U,U'\) are
conditionally independent posterior draws, then
\[
 \frac{\Cov\{g(U),U\mid Y\}}{\Var(U\mid Y)}
 =
 \frac{\E[\{g(U)-g(U')\}(U-U')\mid Y]}
      {\E[(U-U')^2\mid Y]},
\]
whenever the denominator is nonzero.  The right-hand side is a convex
average of secant slopes.  Taylor's theorem places every slope between
\(b(q)-CH/\sqrt n\) and \(b(q)+CH/\sqrt n\).  When the posterior variance
vanishes, both differentiated risks vanish.  This proves
\eqref{eq:ST2}; \eqref{eq:ST1} follows directly from conditional variance.

\begin{corollary}[Model-specific differential increments]
\label{cor:model-increments}
Let \(\delta_n\downarrow0\) and choose \(H_n\) as in
\eqref{eq:slow-radius}.  In each model of
Theorem~\ref{thm:model-transfer}, uniformly over compact interior base
points,
\[
 R_{n,H_n}(v+\delta_n)-R_{n,H_n}(v)
 =\delta_n\{1+o(1)\}.
\]
For a smooth target \(\psi\), the right-hand side becomes
\(\dot\psi(\theta_0)^2\delta_n\{1+o(1)\}\).
\end{corollary}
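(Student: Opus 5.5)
The plan is to obtain the corollary by combining Theorem~\ref{thm:model-transfer} with Corollary~\ref{cor:slow-radius} for the identity target, and then the smooth-target version of the endpoint-prior sandwich (as in the proof of Theorem~\ref{thm:joint-law}) for \(\psi\). I will not redo any posterior comparison. The only work is to check that the error orders and radius constraints of Theorem~\ref{thm:model-transfer} match the hypotheses of these earlier results.

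\textbf{Step 1: error bounds and radius.} For the Gaussian model the errors \(\eta_{0,n},\eta_{1,n}\) vanish identically, so \eqref{eq:eta0-rate}--\eqref{eq:eta1-rate} hold trivially. For the binary, Poisson and negative-binomial models, parts 2--4 of Theorem~\ref{thm:model-transfer} give exactly the bounds \eqref{eq:eta0-rate}--\eqref{eq:eta1-rate}, but only on the range \(1\leq H=o(n^{1/6})\). I therefore check that \(H_n=\min\{\delta_n^{-1/20},n^{1/44}\}\) lies in this range. We have \(H_n\leq n^{1/44}=o(n^{1/6})\), and \(H_n\to\infty\), so \(H_n\geq1\) eventually. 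Assumption~\ref{ass:GT} then holds uniformly over compact interior sets of \(v\) and of the fixed model parameters.

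\textbf{Step 2: identity target.} Corollary~\ref{cor:slow-radius} shows that \(H_n\) satisfies \eqref{eq:transfer-rates}. Theorem~\ref{thm:differential-transfer} then yields \(R_{n,H_n}(v+\delta_n)-R_{n,H_n}(v)=\delta_n\{1+o(1)\}\). Uniformity over compact interior base points follows from the final sentence of that theorem, since the constants from Theorem~\ref{thm:model-transfer} are uniform. The theorem needs \([v,v+\delta_n]\) to stay in a compact interior subset of \(\cK\). This holds once I enlarge the base-point compact slightly inside \(\cK\) and use \(\delta_n\to0\).

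\textbf{Step 3: smooth target.} Theorem~\ref{thm:model-transfer} also gives Assumption~\ref{ass:ST} for a single \(C^2\) target with \(\dot\psi(\theta_0)\neq0\); the context set is a singleton. I then repeat the first paragraph of the proof of Theorem~\ref{thm:joint-law} with \(\delta=\delta_n\) in place of \(A_{k_n}\). By \eqref{eq:ST1}, a least-favorable prior for \(\psi\) is within \(C\{H_n^3n^{-1/2}+H_n^4n^{-1}\}\) of least favorable for the identity target. Multiplied by \(H_n^6\), this error still vanishes, because \(H_n^{9}/\sqrt n\leq n^{9/44-1/2}\to0\). The identity-target posterior-variance concentration therefore applies to both endpoint priors. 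Inequality \eqref{eq:ST2} multiplies the derivatives by \(\{\dot\psi(\theta_0)\pm CH_nn^{-1/2}\}^2=\dot\psi(\theta_0)^2\{1+o(1)\}\), since \(b_0>0\). Integrating across the interval and evaluating each minimax value under the opposite endpoint prior gives the increment \(\dot\psi(\theta_0)^2\delta_n\{1+o(1)\}\).

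\textbf{Main obstacle.} The delicate point is Step 3. Assumption~\ref{ass:ST} refers to priors on \([-H,H]\) under the original local parametrization, while the identity-target sandwich runs in the \(v\)-coordinate. I have to confirm that the endpoint-prior argument of Theorem~\ref{thm:differential-transfer} goes through verbatim once the least-favorable certificate error is replaced by the \eqref{eq:ST1} error. In particular, the \(CH_n^4\delta_n\) Lipschitz drift must remain within \eqref{eq:near-lf}. It does, because \(H_n^{10}\delta_n\leq\delta_n^{1/2}\to0\).
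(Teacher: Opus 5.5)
Your proposal is correct and follows the paper's own route. The corollary comes from feeding the error orders of Theorem~\ref{thm:model-transfer} into Corollary~\ref{cor:slow-radius} and Theorem~\ref{thm:differential-transfer}; those orders apply because \(H_n\le n^{1/44}=o(n^{1/6})\), and this is exactly what the supplement's binary and Poisson verification propositions do. The smooth-target version is the endpoint-prior sandwich \eqref{eq:smooth-differential} from the proof of Theorem~\ref{thm:joint-law}, with \(\delta_n\) in place of \(A_{k_n}\). Your exponent checks, \(H_n^{9}/\sqrt n\to0\) and \(H_n^{10}\delta_n\le\delta_n^{1/2}\), are the ones needed. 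The \(H_n^{10}/n\) term you did not mention is trivially smaller.
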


The consequences for experiment menus follow by substituting their
inverse-information fields into Theorem~\ref{thm:joint-law}.  The following
counting model gives a nonquantum example in which the frontier is available
in closed form.

\begin{corollary}[Calibrated Poisson sensor menus]
\label{cor:poisson-sensor-menu}
Let \((\cM,d)\) be a compact geodesic Ahlfors-regular metric space of
dimension \(r>0\), and take \(\cQ=\cA=\cM\).  Fix \(\theta_0>0\),
\(x_0\geq1\), and \(\kappa>0\).  After a context \(q\) is disclosed, sensor
calibration \(a\) produces independent counts
\[
 Y_i\sim\operatorname{Poisson}\{\theta/x(q,a)\},
 \qquad
 x(q,a)=x_0+\kappa d(q,a)^2.
\]
Let \(\psi(\theta)\) be twice continuously differentiable near \(\theta_0\),
with \(b=\dot\psi(\theta_0)\ne0\).  Then
\begin{equation}
 A_k=b^2\theta_0\kappa\rho_k(\cM)^2
     \asymp k^{-2/r}.
 \label{eq:poisson-sensor-frontier}
\end{equation}
For every \(k_n\to\infty\) and every \(H_n\) satisfying
\eqref{eq:joint-radius-rates},
\[
 F_{n,k_n}(H_n)=A_{k_n}\{1+o(1)\}.
\]
Consequently, the excess mean squared error for estimating \(\psi(\theta)\)
on the original scale is of order \(n^{-1}k_n^{-2/r}\).
\end{corollary}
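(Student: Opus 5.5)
The plan is to compute the frontier exactly in the inverse-information coordinate, and then obtain the joint law by checking the hypotheses of Theorem~\ref{thm:joint-law}, using part~3 of Theorem~\ref{thm:model-transfer} for the Poisson transfer.

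\emph{Frontier.} In the Poisson model of Theorem~\ref{thm:model-transfer}, inverse information is \(v(q,a)=x(q,a)\theta_0=\theta_0x_0+\theta_0\kappa d(q,a)^2\). Since \(\cA=\cM\), the choice \(a=q\) is available and gives the oracle level \(v_\star(q)=\theta_0x_0\), which is the same for every context. Binomial thinning supplies the Blackwell chain \eqref{eq:blackwell-chain}. Because \(\cM\) is geodesic, \(d(q,\cdot)\) takes every value in \([0,\operatorname{diam}\cM]\) along a geodesic from \(q\), so every intermediate level near \(v_\star\) is available. (In any case the lower-bound step of Theorem~\ref{thm:joint-law} only needs Poisson experiments with intermediate means, and these are obtained by thinning.) Substituting into \eqref{eq:Ak-v} gives
\[
 \sup_q b^2\{v_V(q)-v_\star(q)\}=b^2\theta_0\kappa\sup_q\min_{a\in V}d(q,a)^2 .
\]
Taking the infimum over menus with \(|V|\le k\), and using that the square is monotone, yields \(A_k=b^2\theta_0\kappa\rho_k(\cM)^2\). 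Compactness makes the infimum attained, or approached with relative error \(o(1)\). It also gives \(\rho_k\to0\), hence \(A_k\to0\). The Ahlfors-regular volume argument in the proof of Theorem~\ref{thm:geometry} gives \(\rho_k(\cM)\asymp k^{-1/r}\), which proves \eqref{eq:poisson-sensor-frontier}.

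We also need \(A_k>0\) for every \(k\). If \(\cM\) were finite, it would be \(0\)-dimensional and could not be Ahlfors regular with \(r>0\). So \(\cM\) is infinite, every finite menu leaves some point at positive distance, and compactness makes the supremum of \(\min_{a\in V}d(q,a)\) strictly positive. Hence every \(k_n\to\infty\) is covered by the statement.

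\emph{Joint law.} The intervals \([\theta_0x_0,\theta_0x_0+\bar\delta]\) do not depend on \(q\) and lie in a compact subset of \((0,\infty)\). Part~3 of Theorem~\ref{thm:model-transfer} gives Assumption~\ref{ass:GT} with rates \eqref{eq:eta0-rate}--\eqref{eq:eta1-rate}, uniformly over compact \(v\)-ranges. Its last sentence gives Assumption~\ref{ass:ST} for \(\psi\). The target does not depend on \(q\), so \(b_0=b_1=|b|\), and the uniformity over contexts is automatic: the local experiment at level \(v\) depends on \((q,a)\) only through \(v\).

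One check is needed on the radius. Theorem~\ref{thm:model-transfer} requires \(H=o(n^{1/6})\), and \(H_n^{11}/\sqrt n\to0\) gives \(H_n=o(n^{1/22})\), which is stronger. Theorem~\ref{thm:joint-law} therefore yields \(F_{n,k_n}(H_n)=A_{k_n}\{1+o(1)\}\). By the scaling remark in Section~\ref{sec:fixed-menu}, the original-scale excess is \(n^{-1}F_{n,k_n}\asymp n^{-1}k_n^{-2/r}\).

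\emph{Main obstacle.} The delicate point is matching the definitions. The context-indexed model must be the same local experiment family as the Poisson model in Theorem~\ref{thm:model-transfer}, with \(x\ge1\) guaranteed by \(x_0\ge1\). The levels \(v\) that actually arise must lie in compact interior sets, which holds because \(\cM\) is bounded. Finally, \(F_{n,k}\) is computed with each context's own menu level \(v_V(q)\), which is exactly what \eqref{eq:Ak-v} encodes.
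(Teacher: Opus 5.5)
Your proposal is correct and follows essentially the same route as the paper. Thinning makes the nearest calibration the greatest element, with oracle $a=q$. Then $v(q,a)=\theta_0x(q,a)$ gives $A_k=b^2\theta_0\kappa\rho_k(\cM)^2\asymp k^{-2/r}$ via the Ahlfors covering argument, the geodesic property supplies the intermediate levels, and the Poisson part of Theorem~\ref{thm:model-transfer} feeds Theorem~\ref{thm:joint-law}. Your added checks make explicit what the paper leaves implicit: $A_k>0$ for every $k$ because $\cM$ is infinite, and $H_n=o(n^{1/22})$ meets the $H=o(n^{1/6})$ requirement. One small imprecision: from a fixed $q$ the distance $d(q,\cdot)$ only ranges over $[0,\max_a d(q,a)]\supseteq[0,\operatorname{diam}\cM/2]$, which still suffices for a fixed neighborhood of $v_\star$.
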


\begin{proof}
If \(x_2\geq x_1\), thinning a
\(\operatorname{Poisson}(\theta/x_1)\) count with probability
\(x_1/x_2\) gives a \(\operatorname{Poisson}(\theta/x_2)\) count.  The
kernel does not depend on \(\theta\).  Hence the menu member nearest to \(q\)
is greatest in Blackwell order, and the oracle calibration is \(a=q\).  The
one-observation information and its inverse at \(\theta_0\) are
\[
 I(q,a)=\{\theta_0x(q,a)\}^{-1},
 \qquad v(q,a)=\theta_0x(q,a).
\]
It follows that
\[
 D(q,a)=b^2\theta_0\kappa d(q,a)^2,
\]
which gives the identity in \eqref{eq:poisson-sensor-frontier}.  Ahlfors
regularity gives \(\rho_k(\cM)\asymp k^{-1/r}\).  The geodesic property
provides every intermediate inverse-information level between an action and
the oracle, while compactness keeps all such levels in a common interior
interval.  The Poisson part of Theorem~\ref{thm:model-transfer} and
Theorem~\ref{thm:joint-law} give the joint assertion.  Dividing the localized
risk difference by \(n\) gives the original-scale statement.
\end{proof}

We next record the quantum specialization, which has a non-Euclidean action
space.

For the binary/qubit family, extend the finite-neighborhood risk notation to
the completely uninformative endpoint.  If \(c(q,a)=0\), put
\(v(q,a)=\infty\) and define
\begin{equation}
 \cR_{n,H}^{\psi}(q,\infty)
 :=\frac14\left\{
       \max_{|h|\leq H}\psi_{q,n}(h)
       -\min_{|h|\leq H}\psi_{q,n}(h)
      \right\}^2.
 \label{eq:qubit-zero-information-risk}
\end{equation}

\begin{corollary}[Radial qubit measurement menus]
\label{cor:qubit}
Let \(\rho_{q,t}=\{I+tq^\mathsf{T}\bm\sigma\}/2\) be a radial qubit state,
where the direction \(q\in\mathbb S^2\) is disclosed and the radius \(t\) is
unknown.  A projective measurement along an unoriented axis
\([a]\in\mathbb {RP}^2\) gives the binary experiment
\eqref{eq:binary-model} with
\[
 c(q,a)=|q^\mathsf{T}a|
       =\cos d_{\mathbb {RP}^2}([q],[a]).
\]
For a smooth spectral functional \(\psi(t)\), at an interior
\(t_0\) with \(\dot\psi(t_0)\ne0\),
\begin{equation}
 A_k=\dot\psi(t_0)^2
       \tan^2\{\rho_k(S)\},
 \qquad S=\{[q]:q\in\cQ\}.
 \label{eq:qubit-frontier}
\end{equation}
If \(S\) is Ahlfors regular of dimension \(r\), then
\(A_k\asymp k^{-2/r}\).  For the full Bloch context class
\(S=\mathbb {RP}^2\), the sharp covering asymptotic gives
\[
 kA_k\longrightarrow
 \frac{4\pi}{3\sqrt3}\,\dot\psi(t_0)^2.
\]
For every \(k_n\to\infty\) with
\(A_{k_n}>0\), and every \(H_n\) satisfying
\eqref{eq:joint-radius-rates}, the finite-neighborhood excess risk satisfies
\[
 F_{n,k_n}(H_n)=A_{k_n}\{1+o(1)\}.
\]
In particular, the explicit radius in \eqref{eq:joint-radius} is admissible.
\end{corollary}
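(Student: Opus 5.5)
The proof follows the pattern of Corollary~\ref{cor:poisson-sensor-menu}: three steps, namely the Blackwell chain, the explicit distortion and frontier, and then an appeal to Theorem~\ref{thm:joint-law} through the binary part of Theorem~\ref{thm:model-transfer}.

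\emph{Step 1: model, chain and oracle.} Measuring \(\rho_{q,t}\) along axis \(a\) with outcome \(y\in\{\pm1\}\) gives \(\Pp(Y=y)=\{1+y\,t\,q^{\mathsf T}a\}/2\). Replacing \(a\) by \(-a\) and relabelling outcomes shows that only \(c(q,a)=|q^{\mathsf T}a|\) matters, which is the binary model \eqref{eq:binary-model}. The binary-channel garbling noted after Theorem~\ref{thm:collapse} makes larger \(c\) Blackwell-greater. Hence, at each context, the menu axis closest to \([q]\) in \(\mathbb{RP}^2\) is the greatest element, and the oracle is \([a]=[q]\) with \(c=1\). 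The relation \(c=\cos d_{\mathbb{RP}^2}\) holds because the projective distance is \(\arccos|q^{\mathsf T}a|\in[0,\pi/2]\). Every intermediate quality \(c\in(0,1]\) is available along a geodesic, which gives \eqref{eq:blackwell-chain}.

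\emph{Step 2: distortion and frontier.} From \(I_c(t_0)=c^2/(1-c^2t_0^2)\) we get \(v=c^{-2}-t_0^2\), so
\[
 D=\dot\psi(t_0)^2(c^{-2}-1)=\dot\psi(t_0)^2\tan^2 d([q],[a]).
\]
The case \(c=0\) gives \(D=\infty\), consistent with \eqref{eq:qubit-zero-information-risk} and the infinite distortion branch. Since \(\tan^2\) is increasing on \([0,\pi/2)\), the quantity \(\sup_q\min_{a\in V}D\) equals \(\dot\psi^2\tan^2\) of the covering radius of \(S\) by \(V\). Taking the infimum over \(V\) and using monotonicity and continuity gives \eqref{eq:qubit-frontier}. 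I will take care that the unrestricted centers in \(\rho_k\) coincide with \(\cA=\mathbb{RP}^2\); they do.

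Ahlfors regularity gives \(\rho_k(S)\asymp k^{-1/r}\) by the argument in Theorem~\ref{thm:geometry}, and \(\tan^2 x\sim x^2\) then gives \(A_k\asymp k^{-2/r}\). For \(S=\mathbb{RP}^2\), I lift to \(\mathbb S^2\): a \(k\)-point covering of \(\mathbb{RP}^2\) with radius \(\rho\) corresponds to an antipodally symmetric \(2k\)-point covering of the sphere. I then invoke the sharp thin-covering asymptotic for the sphere, \(N\rho_N^2\to 8\pi/(3\sqrt3)\), obtained from hexagonal covering density \(2\pi/(3\sqrt3)\) on area \(4\pi\). This gives \(k\rho_k^2\to 4\pi/(3\sqrt3)\). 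The main obstacle is here: one must check that the sharp constant is attained by antipodally symmetric configurations. The route I would try is to note that local hexagonal tilings on a hemisphere can be reflected, so the symmetry costs only boundary effects of lower order. Finally \(\tan^2\rho_k\sim\rho_k^2\).

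\emph{Step 3: joint law.} The coordinate \(v=c^{-2}-t_0^2\) ranges over compact interior intervals \([1-t_0^2,1-t_0^2+\bar\delta]\), uniformly in \(q\). Theorem~\ref{thm:model-transfer}(2) supplies Assumptions~\ref{ass:GT} and~\ref{ass:ST}, with rates \eqref{eq:eta0-rate}--\eqref{eq:eta1-rate} when \(H_n=o(n^{1/6})\); condition \eqref{eq:joint-radius-rates} implies this. The frontier infimum is attained by compactness. Since \(A_k\to0\), Theorem~\ref{thm:joint-law} yields \(F_{n,k_n}(H_n)=A_{k_n}\{1+o(1)\}\), and its statement shows that \eqref{eq:joint-radius} is admissible.
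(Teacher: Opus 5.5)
Your Steps 1 and 2, including the Ahlfors-rate argument, match the paper's proof. Step 3 makes the same appeal to Theorem~\ref{thm:joint-law} through the binary part of Theorem~\ref{thm:model-transfer}.

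\textbf{The sharp constant for \(S=\mathbb{RP}^2\).} This is a genuine gap. Lifting a \(k\)-axis covering to the \(2k\) points \(\{\pm a_i\}\subset\mathbb S^2\) gives \(\rho_k(\mathbb{RP}^2)\geq\rho_{2k}(\mathbb S^2)\). That yields only \(\liminf_k k\rho_k(\mathbb{RP}^2)^2\geq4\pi/(3\sqrt3)\). The matching \(\limsup\) needs antipodally symmetric, asymptotically optimal sphere coverings. Your reflection remark is a heuristic, not a proof: making it rigorous still requires a sharp covering asymptotic on a hemisphere, a region of area \(2\pi\), with control at the boundary.

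The difficulty comes from the lift itself. The paper applies Gruber's geodesic-disc covering asymptotic for two-dimensional Riemannian manifolds directly to \(\mathbb{RP}^2\) with its round metric, which has area \(2\pi\). This gives \(k\rho_k(\mathbb{RP}^2)^2\to2\cdot2\pi/(3\sqrt3)=4\pi/(3\sqrt3)\) in one step, and \(\tan^2x=x^2+O(x^4)\) then gives the constant for \(A_k\). Covering \(\mathbb{RP}^2\) is exactly the antipodally symmetric covering problem on \(\mathbb S^2\), so working on the quotient removes your obstacle.

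\textbf{The uninformative endpoint in Step 3.} This is a smaller, easily fixed omission. Theorem~\ref{thm:joint-law} is stated for a chain of finite inverse-information levels. An arbitrary menu may leave a context whose best axis is orthogonal to it, so \(c=0\), \(v=\infty\), and \(F_{n,k}\) uses the value \eqref{eq:qubit-zero-information-risk}.

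The paper first checks that this value is the true minimax risk of the uninformative experiment. The law of any terminal rule is free of \(h\), so \(\E\{T-\psi_{q,n}(h)\}^2=\Var(T)+\{\E T-\psi_{q,n}(h)\}^2\), and the constant midpoint is optimal. This also gives \(\dot\psi(t_0)^2H^2+o(1)\), which is why the distortion is infinite.

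The paper then handles such a context in the lower bound by Blackwell monotonicity against the finite intermediate degradation inserted in the proof of Theorem~\ref{thm:joint-law}. In the upper bound, near-optimal menus use only axes with \(c\) bounded away from zero. Your Step 1 already contains the needed garbling of any \(c>0\) experiment into the \(c=0\) one, so only this short addition is required.
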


\begin{proof}
The Born probabilities are
\(\Pp(Y=y)=\{1+ytq^\mathsf{T}a\}/2\).  Since \(q\) is known, outcome
relabeling replaces \(q^\mathsf{T}a\) by its absolute value.  Binary
attenuation is ordered by a parameter-free binary symmetric channel.
At \(c=0\), the observations are symmetric Bernoulli variables whose law
does not depend on \(h\).  If \(T\) is any terminal rule, its mean and
variance are therefore common to all \(h\), and
\[
 \E\{T-\psi_{q,n}(h)\}^2
 =\Var(T)+\{\E T-\psi_{q,n}(h)\}^2.
\]
Randomization cannot improve the worst-case risk, and the constant midpoint
of the target range attains \eqref{eq:qubit-zero-information-risk}.  For every
fixed \(H\), differentiability and \(\dot\psi(t_0)\ne0\) give
\[
 \cR_{n,H}^{\psi}(q,\infty)
 =\dot\psi(t_0)^2H^2+o(1)
 \qquad(n\to\infty).
\]
Thus its subsequent \(H\to\infty\) limit is infinite, consistently with the
extended-value fixed-menu distortion.  For \(c>0\),
\[
 I_c(t_0)^{-1}-I_1(t_0)^{-1}=c^{-2}-1
 =\tan^2 d_{\mathbb {RP}^2}([q],[a]).
\]
Because \(\tan^2\) is increasing on \([0,\pi/2)\), taking the menu infimum
gives \eqref{eq:qubit-frontier}, with the extended-value convention when a
covering radius equals \(\pi/2\).  This infinite value is the first-order
distortion, not the finite-\(H\) risk in
\eqref{eq:qubit-zero-information-risk}.  If \(S\) is Ahlfors regular, the direct
covering argument in the proof of Theorem~\ref{thm:geometry} gives
\(\rho_k(S)\asymp k^{-1/r}\).  Since \(\rho_k(S)\to0\),
\(\tan^2\{\rho_k(S)\}\asymp\rho_k(S)^2\), which proves the stated rate
without requiring the distortion to be finite on the whole action space.
For \(S=\mathbb {RP}^2\), the geodesic-disc covering asymptotic of
\citet{Gruber1998}, applied to a two-dimensional Riemannian manifold of area
\(2\pi\), gives
\[
 k\rho_k(\mathbb {RP}^2)^2\longrightarrow
 \frac{4\pi}{3\sqrt3}.
\]
The sharp constant follows from \eqref{eq:qubit-frontier} and
\(\tan^2 x=x^2+O(x^4)\) as \(x\to0\).

For the joint limit, sufficiently fine covering menus use only actions in a
fixed neighborhood of their oracle axes, where \(c\) is bounded away from
zero and Theorem~\ref{thm:model-transfer} verifies the local transfer
conditions.  The endpoint \(v=\infty\) is never used in the differentiated
transfer.  In the lower bound, if a selected context has \(c=0\), Blackwell
monotonicity compares that experiment with the finite intermediate
degradation used in the proof of Theorem~\ref{thm:joint-law}.  The theorem
then gives the final assertion.
\end{proof}

\begin{theorem}[Coupled baseline at a prespecified radius]
\label{thm:qubit-baseline}
Let the disclosed radial-qubit directions range over an equatorial projective
circle, let the target be the binary entropy
\[
 s(t)=-\frac{1+t}{2}\log\frac{1+t}{2}
      -\frac{1-t}{2}\log\frac{1-t}{2},
\]
and let \(t_0\in[\tau,1-\tau]\) for fixed \(0<\tau<1/2\).
For every integer sequence \(k_n\geq2\), with \(H_n=n^{1/6}\),
\begin{equation}
 F_{n,k_n}(H_n)
 =\operatorname{artanh}^2(t_0)
    \tan^2\!\left(\frac{\pi}{2k_n}\right)
  +O_\tau(n^{-1/3}),
 \label{eq:qubit-baseline}
\end{equation}
uniformly in \(t_0\).  Consequently, if \(k_n\to\infty\) and
\(k_n=o(n^{1/6})\), then
\[
 \frac{F_{n,k_n}(n^{1/6})}{A_{k_n}}\longrightarrow1.
\]
\end{theorem}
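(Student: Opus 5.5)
The plan is to bypass the differentiated transfer machinery of Theorem~\ref{thm:joint-law}. Instead we prove an \emph{absolute} two-sided bound on each single-experiment risk at the radius \(H_n=n^{1/6}\), namely
\[
 \cR^{s}_{n,H_n}(q,v(c))=s'(t_0)^2\,v(c)+O_\tau(n^{-1/3}),
\]
uniformly in \(t_0\in[\tau,1-\tau]\) and in qualities \(c\) bounded below, say \(c\in[2^{-1/2},1]\). Here \(v(c)=c^{-2}-t_0^2\). Since \(s'(t)=-\operatorname{artanh}(t)\), we have \(s'(t_0)^2=\operatorname{artanh}^2(t_0)\). Subtracting the oracle value \(v(1)=1-t_0^2\), the excess at context \(q\) becomes \(\operatorname{artanh}^2(t_0)\tan^2 d([q],[a_V(q)])+O(n^{-1/3})\). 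The additive error is uniform, so it passes through the \(\sup_q\) and \(\inf_V\) in \eqref{eq:finite-frontier}. What remains is the covering problem on the equatorial projective circle. That circle has length \(\pi\), so \(k\) equally spaced axes give \(\rho_k=\pi/(2k)\), and no menu can do better by pigeonhole. Monotonicity of \(\tan^2\), as in Corollary~\ref{cor:qubit}, then gives \eqref{eq:qubit-baseline}. For \(k\ge2\) the optimal menu has \(c\ge\cos(\pi/4)=2^{-1/2}\), which is why that range of \(c\) suffices.

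\emph{Lower bound.} I would use the van Trees inequality for the functional \(\psi_n(h)=\sqrt n\{s(t_0+h/\sqrt n)-s(t_0)\}\) with the cosine-squared prior on \([-H,H]\):
\[
 \text{risk}\ \ge\ \frac{\{\E_\pi\dot\psi_n(h)\}^2}{\E_\pi I_n(h)+\pi^2/H^2}.
\]
The per-observation local information is \(I_n(h)=\{c^{-2}-(t_0+h/\sqrt n)^2\}^{-1}\). Expanding in \(h/\sqrt n\), the linear term integrates to zero because the prior is symmetric. This gives \(\E_\pi I_n=v^{-1}+O(H^2/n)\), and similarly \(\E_\pi\dot\psi_n=s'(t_0)+O(H^2/n)\). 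With \(\pi^2/H^2=\pi^2n^{-1/3}\), the lower bound is \(s'(t_0)^2v-Cn^{-1/3}\). For an arbitrary menu, some context has attenuation \(c\le\cos(\pi/(2k))\), possibly \(c=0\). Blackwell monotonicity (Theorem~\ref{thm:collapse} together with the binary symmetric channel) lets me apply the bound at \(c=\cos(\pi/(2k))\ge2^{-1/2}\) instead.

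\emph{Upper bound.} I would use the plug-in estimator \(\sqrt n\{s(\hat t)-s(t_0)\}\). Here \(\hat t=t_0+T_{n,x}/\sqrt n\), truncated to \([\tau/2,1-\tau/2]\), and \(T_{n,x}\) is the unbiased statistic from Section~\ref{sec:models}.
\begin{itemize}
\item The variance of \(T_{n,x}\) at \(h\) is exactly \(v-2t_0h/\sqrt n-h^2/n\le v+CH/\sqrt n=v+Cn^{-1/3}\).
\item A second-order Taylor expansion of \(s\) gives error \(s'(t_0)(T-h)+\tfrac{s''}{2\sqrt n}(T^2-h^2)+O(|T|^3/n)\).
\item The cross term has expectation \(\E[(T-h)^3+2h(T-h)^2]/\sqrt n=O(H/\sqrt n)\). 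The remaining terms are \(O(H^4/n)\le O(n^{-1/3})\).
\item The derivative of \(s'\) between \(t_0\) and \(t_0+h/\sqrt n\) changes by \(O(H/\sqrt n)\), which is also absorbed.
\item Truncation is active only on a Hoeffding event of probability \(e^{-cn}\), and the truncated estimator's loss is bounded by \(O(n)\), so this contributes nothing.
\end{itemize}
Hence the upper bound is \(s'(t_0)^2v+Cn^{-1/3}\).

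The main obstacle is keeping every remainder at order \(n^{-1/3}\) uniformly in \(t_0\in[\tau,1-\tau]\) and \(c\in[2^{-1/2},1]\). In particular, the linear-in-\(h\) terms must cancel by prior symmetry in the lower bound, while in the upper bound they contribute only \(H/\sqrt n=n^{-1/3}\). The consequence then follows because \(A_{k}=\operatorname{artanh}^2(t_0)\tan^2(\pi/(2k))\asymp k^{-2}\). When \(k_n=o(n^{1/6})\) we have \(n^{-1/3}=o(k_n^{-2})\), and dividing \eqref{eq:qubit-baseline} by \(A_{k_n}\) gives the ratio limit one. Since \(\operatorname{artanh}^2(t_0)\ge\operatorname{artanh}^2(\tau)>0\) on this range, the convergence is uniform in \(t_0\).
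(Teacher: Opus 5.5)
Your proposal is correct and follows essentially the same route as the paper's proof. The shared steps are: absolute two-sided bounds $s'(t_0)^2(c^{-2}-t_0^2)\pm C_\tau n^{-1/3}$ for each single-overlap risk, uniform over $c\in[2^{-1/2},1]$, obtained from the clipped plug-in estimator and the cosine-squared van Trees bound (with $H=n^{1/6}$ balancing $H/\sqrt n$ against $H^{-2}$); then subtraction; the equally spaced menu for the upper bound; and a worst context plus Blackwell monotonicity for the lower bound. The one step you leave implicit is that menu axes need not be equatorial, which the paper handles by discarding axes with zero equatorial projection and projecting the rest onto the circle (this can only increase overlaps with equatorial contexts) before taking the midpoint of a largest gap.
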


The expansion \eqref{eq:qubit-baseline} is proved in Section S7 of the
Supplementary Material by coupling the menu and
oracle calculations before subtraction.  The key uniformity is
\(\cos\{\pi/(2k)\}\geq1/\sqrt2\): all moment, clipping, and van Trees
constants are therefore independent of \(k\).  The exponent \(1/6\) is a
proved sufficient range for this prespecified radius, not a lower bound on
what any method can achieve.  Corollary~\ref{cor:qubit}, by contrast, has no
sample--menu coupling because its localization radius is allowed to grow more
slowly.  The fixed-action estimators and the van Trees argument are standard,
and the fixed-menu qubit geometry is established above.  Coupling the menu and
oracle risks uniformly in \(k\) gives the triangular \((n,k_n)\) statement.

\subsection{Finite-sample binary diagnostics}
\label{sec:binary-numerics}

We numerically examine the identity-target binary experiment that underlies
the differentiated transfer theorem.  For
\(c_k=\cos\{\pi/(2k)\}\), put
\begin{align*}
 \mathcal R_{n,H}(c;t_0)
 &=\inf_T\sup_{|h|\leq H}
   \E_{t_0+h/\sqrt n,c}(T-h)^2,\\
 D_{n,k}(H;t_0)
 &=\mathcal R_{n,H}(c_k;t_0)-\mathcal R_{n,H}(1;t_0),
 \qquad A_k=c_k^{-2}-1.
\end{align*}
The sufficient count has the exact binomial law
\(J\sim\operatorname{Bin}\{n,p_h\}\), where
\(p_h=\{1+c(t_0+h/\sqrt n)\}/2\).  We therefore use deterministic
likelihood calculations rather than Monte Carlo.

For each value of \(c\), a prior supported on an optimization grid defines an
exact Bayes lower bound for the continuous-parameter minimax risk.  We evaluate
this Bayes risk in standard floating-point arithmetic and denote the reported
value by \(L_c\).  Its Bayes rule \(d_c(J)\) is evaluated on a separate
validation mesh
\(\mathcal G\).  If \(\Delta d_j=d_{j+1}-d_j\),
\(\Delta(d^2)_j=d_{j+1}^2-d_j^2\), and
\(\underline v_p=\inf_{|h|\leq H}p_h(1-p_h)\), the analytic derivative bound
used to extend the mesh maximum to the whole interval is
\begin{equation}
 \Lambda_c=\min\left\{
  \frac{2cH^2}{\sqrt{\underline v_p}}+4H,\,
  \frac{c\sqrt n}{2}
    \bigl(\|\Delta(d_c^2)\|_\infty
          +2H\|\Delta d_c\|_\infty\bigr)+4H
 \right\}.
 \label{eq:numerical-lipschitz}
\end{equation}
The two terms in \eqref{eq:numerical-lipschitz} follow, respectively, from a
score/Cauchy--Schwarz bound and the Bernstein derivative identity for
binomial expectations.
In exact arithmetic, the mesh maximum plus \(\Lambda_c\) times half the mesh
spacing gives an upper bound for the risk of this Bayes rule.  Our
implementation evaluates this quantity in standard floating-point arithmetic,
adds a reported guard, and denotes the resulting value by \(U_c\).  We
summarize the menu--oracle comparison by the guarded diagnostic interval
\begin{equation}
 \mathcal I_{n,k}(H;t_0)
 = [L_{c_k}-U_1,\,U_{c_k}-L_1].
 \label{eq:numerical-interval}
\end{equation}
Because the floating-point operations are not outward rounded,
\(\mathcal I_{n,k}(H;t_0)\) is not a certified enclosure.  The analytic
allowance controls continuum discretization, while the displayed interval is
used only as a deterministic numerical diagnostic.

Table~\ref{tab:binary-diagnostics} reports representative normalized versions
of \eqref{eq:numerical-interval} from the prespecified grid.  The optimization
and validation meshes
contain 401 and 100001 points for the explicit-radius rows and 801 and 200001
points for the fixed-radius sensitivity rows.  All 42 completed cases, the
source hash, continuum allowances, optimization gaps, and checksums are in
the accompanying source archive.  At \((n,k,t_0)=(4096,16,0.5)\), doubling
both meshes narrows the guarded diagnostic interval from \([0.3552,0.4119]\) to
\([0.3669,0.4002]\), nested inside the original interval.

\begin{table}[t]
\caption{Guarded deterministic finite-sample diagnostic intervals for
\(D_{n,k}(H;t_0)/A_k\).  The first three rows use the explicit admissible
radius in \eqref{eq:joint-radius}.  The last three hold
\((t_0,k)=(0.5,4)\) fixed and vary \(H\).}
\label{tab:binary-diagnostics}
\centering
\small
\begin{tabular}{lrrrrc}
\hline
Radius & \(t_0\) & \(n\) & \(k\) & \(H\) & Interval \\
\hline
Explicit & 0.25 & 16384 & 16 & 1.247 & [0.343, 0.402] \\
Explicit & 0.50 & 16384 & 16 & 1.247 & [0.359, 0.420] \\
Explicit & 0.75 & 16384 & 16 & 1.247 & [0.448, 0.511] \\
Fixed   & 0.50 & 16384 &  4 & 2.000 & [0.533, 0.540] \\
Fixed   & 0.50 & 16384 &  4 & 4.000 & [0.717, 0.753] \\
Fixed   & 0.50 & 16384 &  4 & 8.000 & [0.760, 0.983] \\
\hline
\end{tabular}
\end{table}

These finite-sample values do not estimate the limiting constant by
extrapolation.  At the displayed explicit radius, \(H\) is only 1.247; for
fixed \(k\), it is eventually capped by \(A_k^{-1/20}\), whereas the theorem
follows sequences with both \(k_n\to\infty\) and \(H_n\to\infty\).  The
fixed-radius rows show the predicted movement toward one as the local
interval expands, while the wider \(H=8\) interval records the analytic
continuum allowance.  These finite-sample diagnostics are not used in the
proof of the asymptotic theorem.

The inverse-information coordinate is unbounded for an action orthogonal to
the disclosed direction, but the preceding proof uses posterior transfer only
on a fixed neighborhood of the oracle.

\section{Discussion}
\label{sec:discussion}

The pre-disclosure constraint separates the sample and menu resources.  The
sample size determines estimation precision after an experiment has been
selected, whereas the menu size determines how closely the installed catalog
approximates the context-specific oracle.  Under a greatest-element Blackwell
order, Theorem~\ref{thm:collapse} reduces every adaptive menu policy to
repeated use of one installed experiment.  The fixed-menu effect is then the
inverse-information distortion \(D\), and its optimized value is \(A_k\).

The fixed-menu limit alone does not determine a joint limit when \(A_{k_n}\)
vanishes.  The differentiated argument compares Bayes-risk increments along
a Blackwell chain rather than approximating the two risk levels separately.
The Gaussian posterior-variance certificate determines the limiting
increment, and all-prior transfer carries it to the original experiment.
Theorem~\ref{thm:primitive-transfer} derives this transfer from likelihood and
generator conditions that do not depend on a prior.  Corollary
\ref{cor:primitive-joint} records the resulting direct route from these
conditions to the joint sample-menu law.  Proposition~\ref{prop:lan-witness}
shows why pointwise Gaussian convergence is not an adequate substitute.

The geometric and statistical parts of the argument remain separate.  Metric
covering determines the behavior of \(A_k\); differentiated transfer shows
that the finite-neighborhood minimax excess has the same first-order value.
On an Ahlfors-regular oracle image this gives an original-scale excess mean
squared error of order \(n^{-1}k_n^{-2/r}\).  Corollary
\ref{cor:poisson-sensor-menu} obtains this rate for a calibrated Poisson sensor
catalog with an exact squared-distance frontier.  Corollary~\ref{cor:qubit}
gives the corresponding projective geometry for radial-qubit measurements.
Theorem~\ref{thm:qubit-baseline} provides a separate coupled calculation at a
prespecified localization radius.

Several restrictions are material.  A menu need not collapse if its
experiments are incomparable, as Example~\ref{ex:incomparable} shows.  The
one-dimensional chain also excludes vector parameters for which optimal
designs mix complementary actions.  The task quotient requires the oracle
sets to be fibers of one common map; genuinely set-valued oracle families may
lead to a different covering problem.  Finally, the primitive transfer theorem
requires a parameter-free degradation semigroup, a multiplicative local
likelihood approximation, and calibrated edge scores.  These conditions have
been verified here in four ordered families.  Extending the result to a general
DQM family would require control stronger than ordinary LAN and need not be
possible without an ordered degradation structure.

For partially ordered experiments, the natural design object is a menu of
allocations rather than a menu of single actions, and the induced distortion
will generally be matrix-valued before scalarization by the target.  Within
the ordered setting, sharper posterior-transfer bounds may permit faster
localization radii.  A second direction is to determine sharp covering
constants for broader oracle images under the intrinsic metric defined by the
Fisher-information Hessian.

\begin{appendix}

\section{Gaussian least-favorable posterior variance}
\label{app:gaussian-pvc}

\begin{proof}[Proof of Lemma~\ref{lem:gaussian-reference}]
Only \eqref{eq:gaussian-pvc} requires proof beyond the facts stated in the
text.  Gaussian scale equivariance gives
\[
 R_H^G(v)=v M(H/\sqrt v),\qquad M(a)=R_a^G(1).
\]
The function \(M\) is nondecreasing and bounded by one.  This gives the upper
bound in \eqref{eq:G-Lipschitz}; the lower bound follows by adding independent
Gaussian noise.  The cosine-squared prior used in the text gives
\eqref{eq:G-minimax-limit}.

For a prior \(\pi_H\) satisfying \eqref{eq:near-lf}, let
\(L_H=C(1+H^6)\) be a uniform curvature bound.  For \(s>0\),
\eqref{eq:G-Lipschitz} and least favorability imply
\[
 r_{v+s}^G(\pi_H)-r_v^G(\pi_H)\leq s+\eps_H.
\]
Taylor's theorem therefore gives
\[
 \partial_vr_v^G(\pi_H)
 \leq1+\eps_H/s+L_Hs/2.
\]
When \(\eps_H>0\), take \(s=(2\eps_H/L_H)^{1/2}\); the case
\(\eps_H=0\) follows by a limiting choice of \(s\).  Since
\(H^6\eps_H\to0\),
\[
 \limsup_{H\to\infty}\partial_vr_v^G(\pi_H)\leq1.
\]
On the other hand, \eqref{eq:G-minimax-limit} and
\eqref{eq:near-lf} imply
\[
 \E_{\pi_H}\{w_v^G(Y)/v\}=r_v^G(\pi_H)/v\longrightarrow1.
\]
By \eqref{eq:G-mmse-derivative}, the second moment of
\(w_v^G(Y)/v\) is \(\partial_vr_v^G(\pi_H)\).  Jensen's inequality gives the
matching lower limit.  Expanding the centered square proves
\eqref{eq:gaussian-pvc}.
\end{proof}

\section{A uniform smooth-target risk comparison}
\label{app:smooth-target}

\begin{lemma}
\label{lem:smooth-risk}
Suppose
\[
 g_n(h)=b h+\rho_n(h),\qquad
 \sup_{|h|\leq H}|\rho_n(h)|\leq C H^2/\sqrt n.
\]
For every statistical experiment and every prior on \([-H,H]\),
\[
 \left|r^{g_n}(\pi)-b^2r^h(\pi)\right|
 \leq C'\{H^3/\sqrt n+H^4/n\}.
\]
The same bound holds for the corresponding minimax risks.
\end{lemma}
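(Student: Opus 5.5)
Fix an experiment and a prior \(\pi\) on \([-H,H]\), and let \(Y\) denote the observation.  Squared-error Bayes risks are expected posterior variances,
\[
 r^{g_n}(\pi)=\E\Var\{g_n(U)\mid Y\},\qquad
 r^h(\pi)=\E\Var(U\mid Y),
\]
with \(U\sim\pi\).  Conditionally on \(Y\), write \(g_n(U)=bU+\rho_n(U)\) and expand the variance of a sum:
\[
 \Var\{g_n(U)\mid Y\}
 =b^2\Var(U\mid Y)+2b\Cov\{U,\rho_n(U)\mid Y\}
 +\Var\{\rho_n(U)\mid Y\}.
\]
After taking expectations, the plan is to bound the two remainder terms uniformly over experiments and priors.

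The last term is at most \(\E\rho_n(U)^2\leq C^2H^4/n\), by the sup bound on \(\rho_n\).  For the cross term, apply conditional Cauchy--Schwarz:
\[
 |\Cov\{U,\rho_n(U)\mid Y\}|
 \leq \Var(U\mid Y)^{1/2}\,\Var\{\rho_n(U)\mid Y\}^{1/2}
 \leq H\cdot CH^2/\sqrt n,
\]
since \(|U|\leq H\) and \(|\rho_n|\leq CH^2/\sqrt n\).  Here \(b\) is treated as a fixed constant, uniformly bounded in the application, and \(C'\) may depend on \(|b|\) and \(C\).  This gives \(2|b|CH^3/\sqrt n\).  Summing the two bounds yields
\[
 \left|r^{g_n}(\pi)-b^2r^h(\pi)\right|\leq C'\{H^3/\sqrt n+H^4/n\}.
\]
The bound uses nothing about the experiment beyond the support of the prior, so it holds uniformly.

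For the minimax statement I would avoid Bayes--minimax duality, since the lemma claims the bound for every experiment.  Instead, compare estimators directly.  Given any estimator \(T\) of \(h\), use \(bT\) for \(g_n\).  Then
\[
 \E(bT-g_n(h))^2=b^2\E(T-h)^2-2b\,\E\{(T-h)\rho_n(h)\}+\rho_n(h)^2.
\]
The cross term is controlled by \(|b|\{\E(T-h)^2\}^{1/2}CH^2/\sqrt n\).  I would first clip \(T\) to \([-H,H]\), which can only reduce the risk for \(h\in[-H,H]\).  After clipping, \(\E(T-h)^2\leq4H^2\), and the cross term becomes \(O(H^3/\sqrt n)\).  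Taking the supremum over \(h\) and the infimum over \(T\) gives the upper inequality for the \(g_n\) minimax risk.

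For the reverse direction (assuming \(b\ne0\)), given an estimator \(S\) of \(g_n\), use \(T=S/b\) and expand \((S-bh)^2=(S-g_n+\rho_n)^2\) in the same way.  Clip \(S\) to the range of \(g_n\) on \([-H,H]\), an interval of length \(O(H)\), so that its risk is at most \(O(H^2)\).  If \(b=0\), both sides are directly \(O(H^4/n)\).

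The main obstacle is the cross terms.  Without clipping, the risk of an arbitrary estimator is not a priori \(O(H^2)\), so the clipping step is where some care is needed.
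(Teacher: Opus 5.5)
Your proposal is correct and follows the paper's proof. The Bayes bound uses the same conditional-variance expansion with Cauchy--Schwarz, and for the minimax bound you carry out in detail, with clipping, the direct estimator comparison that the paper only mentions as the duality-free alternative to taking the supremum over priors (one small precision: the range of \(g_n\) on \([-H,H]\) has length at most \(2|b|H+2CH^2/\sqrt n\), which need not be \(O(H)\), but the extra piece only adds \(O(H^4/n)\) to the cross term, so your bound stands).
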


\begin{proof}
The Bayes risks are expected posterior variances.  Conditional on the
observation,
\[
 \Var(g_n(h))=b^2\Var(h)+2b\Cov\{h,\rho_n(h)\}
              +\Var\{\rho_n(h)\}.
\]
The conditional standard deviations of \(h\) and \(\rho_n(h)\) are bounded
by \(H\) and \(CH^2/\sqrt n\), respectively.  Cauchy--Schwarz gives the
Bayes-risk bound.  Taking the supremum over priors gives the minimax bound
whenever Bayes--minimax duality holds; the direct estimator comparison gives
the same conclusion without invoking an attaining prior.
\end{proof}

\end{appendix}

\begin{supplement}
\stitle{Supplement to the article on pre-disclosure experiment menus}
\sdescription{The supplement proves the posterior Dirichlet identity and the
uniform all-prior transfer for binary attenuation and Poisson thinning. It
also proves the primitive likelihood-generator theorem, verifies its
conditions for binary, Poisson, and negative-binomial experiments, and gives
the smooth-target transfer and the coupled radial-qubit baseline.}
\end{supplement}

\bibliographystyle{imsart-nameyear}
\bibliography{refs}

\begin{thebibliography}{30}

\bibitem[\protect\citeauthoryear{Adusumilli}{2025}]{Adusumilli2025}
\begin{barticle}[author]
\bauthor{\bsnm{Adusumilli},~\bfnm{Karun}\binits{K.}}
(\byear{2025}).
\btitle{Risk and Optimal Policies in Bandit Experiments}.
\bjournal{Econometrica}
\bvolume{93}
\bpages{1003--1029}.
\bdoi{10.3982/ECTA21075}
\end{barticle}
\endbibitem

\bibitem[\protect\citeauthoryear{Adusumilli}{2026}]{Adusumilli2026}
\begin{bmisc}[author]
\bauthor{\bsnm{Adusumilli},~\bfnm{Karun}\binits{K.}}
(\byear{2026}).
\btitle{Continuous Time Asymptotic Representations for Adaptive Experiments}.
\bnote{\href{https://arxiv.org/abs/2601.00739}{arXiv:2601.00739}}.
\end{bmisc}
\endbibitem

\bibitem[\protect\citeauthoryear{Armstrong}{2022}]{Armstrong2022}
\begin{bmisc}[author]
\bauthor{\bsnm{Armstrong},~\bfnm{Timothy~B.}\binits{T.~B.}}
(\byear{2022}).
\btitle{Asymptotic Efficiency Bounds for a Class of Experimental Designs}.
\bnote{\href{https://arxiv.org/abs/2205.02726}{arXiv:2205.02726}}.
\end{bmisc}
\endbibitem

\bibitem[\protect\citeauthoryear{Ballester, Wehner and
  Winter}{2008}]{BallesterEtAl2008}
\begin{barticle}[author]
\bauthor{\bsnm{Ballester},~\bfnm{Manuel~A.}\binits{M.~A.}},
  \bauthor{\bsnm{Wehner},~\bfnm{Stephanie}\binits{S.}} \AND
  \bauthor{\bsnm{Winter},~\bfnm{Andreas}\binits{A.}}
(\byear{2008}).
\btitle{State Discrimination with Post-Measurement Information}.
\bjournal{IEEE Transactions on Information Theory}
\bvolume{54}
\bpages{4183--4198}.
\bdoi{10.1109/TIT.2008.928276}
\end{barticle}
\endbibitem

\bibitem[\protect\citeauthoryear{Bergemann, Bonatti and
  Smolin}{2018}]{BergemannBonattiSmolin2018}
\begin{barticle}[author]
\bauthor{\bsnm{Bergemann},~\bfnm{Dirk}\binits{D.}},
  \bauthor{\bsnm{Bonatti},~\bfnm{Alessandro}\binits{A.}} \AND
  \bauthor{\bsnm{Smolin},~\bfnm{Alex}\binits{A.}}
(\byear{2018}).
\btitle{The Design and Price of Information}.
\bjournal{American Economic Review}
\bvolume{108}
\bpages{1--48}.
\bdoi{10.1257/aer.20161079}
\end{barticle}
\endbibitem

\bibitem[\protect\citeauthoryear{Bergemann, Yeh and
  Zhang}{2021}]{BergemannYehZhang2021}
\begin{barticle}[author]
\bauthor{\bsnm{Bergemann},~\bfnm{Dirk}\binits{D.}},
  \bauthor{\bsnm{Yeh},~\bfnm{Edmund}\binits{E.}} \AND
  \bauthor{\bsnm{Zhang},~\bfnm{Jinkun}\binits{J.}}
(\byear{2021}).
\btitle{Nonlinear Pricing with Finite Information}.
\bjournal{Games and Economic Behavior}
\bvolume{130}
\bpages{62--84}.
\bdoi{10.1016/j.geb.2021.08.004}
\end{barticle}
\endbibitem

\bibitem[\protect\citeauthoryear{Bertsimas and
  Caramanis}{2010}]{BertsimasCaramanis2010}
\begin{barticle}[author]
\bauthor{\bsnm{Bertsimas},~\bfnm{Dimitris}\binits{D.}} \AND
  \bauthor{\bsnm{Caramanis},~\bfnm{Constantine}\binits{C.}}
(\byear{2010}).
\btitle{Finite Adaptability in Multistage Linear Optimization}.
\bjournal{IEEE Transactions on Automatic Control}
\bvolume{55}
\bpages{2751--2766}.
\bdoi{10.1109/TAC.2010.2049764}
\end{barticle}
\endbibitem

\bibitem[\protect\citeauthoryear{Bertsimas, Goyal and
  Sun}{2011}]{BertsimasGoyalSun2011}
\begin{barticle}[author]
\bauthor{\bsnm{Bertsimas},~\bfnm{Dimitris}\binits{D.}},
  \bauthor{\bsnm{Goyal},~\bfnm{Vineet}\binits{V.}} \AND
  \bauthor{\bsnm{Sun},~\bfnm{Xu~Andy}\binits{X.~A.}}
(\byear{2011}).
\btitle{A Geometric Characterization of the Power of Finite Adaptability in
  Multistage Stochastic and Adaptive Optimization}.
\bjournal{Mathematics of Operations Research}
\bvolume{36}
\bpages{24--54}.
\bdoi{10.1287/moor.1110.0482}
\end{barticle}
\endbibitem

\bibitem[\protect\citeauthoryear{Blackwell}{1953}]{Blackwell1953}
\begin{barticle}[author]
\bauthor{\bsnm{Blackwell},~\bfnm{David}\binits{D.}}
(\byear{1953}).
\btitle{Equivalent Comparisons of Experiments}.
\bjournal{The Annals of Mathematical Statistics}
\bvolume{24}
\bpages{265--272}.
\bdoi{10.1214/aoms/1177729032}
\end{barticle}
\endbibitem

\bibitem[\protect\citeauthoryear{Braess and Dette}{2007}]{BraessDette2007}
\begin{barticle}[author]
\bauthor{\bsnm{Braess},~\bfnm{Dietrich}\binits{D.}} \AND
  \bauthor{\bsnm{Dette},~\bfnm{Holger}\binits{H.}}
(\byear{2007}).
\btitle{On the Number of Support Points of Maximin and {Bayesian} Optimal
  Designs}.
\bjournal{The Annals of Statistics}
\bvolume{35}
\bpages{772--792}.
\bdoi{10.1214/009053606000001307}
\end{barticle}
\endbibitem

\bibitem[\protect\citeauthoryear{Carmeli, Heinosaari and
  Toigo}{2018}]{CarmeliEtAl2018}
\begin{barticle}[author]
\bauthor{\bsnm{Carmeli},~\bfnm{Claudio}\binits{C.}},
  \bauthor{\bsnm{Heinosaari},~\bfnm{Teiko}\binits{T.}} \AND
  \bauthor{\bsnm{Toigo},~\bfnm{Alessandro}\binits{A.}}
(\byear{2018}).
\btitle{State Discrimination with Post-Measurement Information and
  Incompatibility of Quantum Measurements}.
\bjournal{Physical Review A}
\bvolume{98}
\bpages{012126}.
\bdoi{10.1103/PhysRevA.98.012126}
\end{barticle}
\endbibitem

\bibitem[\protect\citeauthoryear{Carter}{2002}]{Carter2002}
\begin{barticle}[author]
\bauthor{\bsnm{Carter},~\bfnm{Andrew~V.}\binits{A.~V.}}
(\byear{2002}).
\btitle{Deficiency Distance between Multinomial and Multivariate Normal
  Experiments}.
\bjournal{The Annals of Statistics}
\bvolume{30}
\bpages{708--730}.
\bdoi{10.1214/aos/1028674839}
\end{barticle}
\endbibitem

\bibitem[\protect\citeauthoryear{DeGroot}{1962}]{DeGroot1962}
\begin{barticle}[author]
\bauthor{\bsnm{DeGroot},~\bfnm{Morris~H.}\binits{M.~H.}}
(\byear{1962}).
\btitle{Uncertainty, Information, and Sequential Experiments}.
\bjournal{The Annals of Mathematical Statistics}
\bvolume{33}
\bpages{404--419}.
\bdoi{10.1214/aoms/1177704567}
\end{barticle}
\endbibitem

\bibitem[\protect\citeauthoryear{Farias and
  Brossier}{2014}]{FariasBrossier2014}
\begin{barticle}[author]
\bauthor{\bsnm{Farias},~\bfnm{Rodrigo~Cabral}\binits{R.~C.}} \AND
  \bauthor{\bsnm{Brossier},~\bfnm{Jean-Marc}\binits{J.-M.}}
(\byear{2014}).
\btitle{Scalar Quantization for Estimation: From an Asymptotic Design to a
  Practical Solution}.
\bjournal{IEEE Transactions on Signal Processing}
\bvolume{62}
\bpages{2860--2870}.
\bdoi{10.1109/TSP.2014.2318140}
\end{barticle}
\endbibitem

\bibitem[\protect\citeauthoryear{Gill and Levit}{1995}]{GillLevit1995}
\begin{barticle}[author]
\bauthor{\bsnm{Gill},~\bfnm{Richard~D.}\binits{R.~D.}} \AND
  \bauthor{\bsnm{Levit},~\bfnm{Boris~Y.}\binits{B.~Y.}}
(\byear{1995}).
\btitle{Applications of the van {Trees} Inequality: A {Bayesian}
  {Cram\'er--Rao} Bound}.
\bjournal{Bernoulli}
\bvolume{1}
\bpages{59--79}.
\bdoi{10.2307/3318681}
\end{barticle}
\endbibitem

\bibitem[\protect\citeauthoryear{Gopal and Wehner}{2010}]{GopalWehner2010}
\begin{barticle}[author]
\bauthor{\bsnm{Gopal},~\bfnm{Deepthi}\binits{D.}} \AND
  \bauthor{\bsnm{Wehner},~\bfnm{Stephanie}\binits{S.}}
(\byear{2010}).
\btitle{Using Post-Measurement Information in State Discrimination}.
\bjournal{Physical Review A}
\bvolume{82}
\bpages{022326}.
\bdoi{10.1103/PhysRevA.82.022326}
\end{barticle}
\endbibitem

\bibitem[\protect\citeauthoryear{Graf and Luschgy}{2000}]{GrafLuschgy2000}
\begin{bbook}[author]
\bauthor{\bsnm{Graf},~\bfnm{Siegfried}\binits{S.}} \AND
  \bauthor{\bsnm{Luschgy},~\bfnm{Harald}\binits{H.}}
(\byear{2000}).
\btitle{Foundations of Quantization for Probability Distributions}.
\bseries{Lecture Notes in Mathematics}
\bvolume{1730}.
\bpublisher{Springer}, \baddress{Berlin}.
\bdoi{10.1007/BFb0103945}
\end{bbook}
\endbibitem

\bibitem[\protect\citeauthoryear{Gruber}{1998}]{Gruber1998}
\begin{barticle}[author]
\bauthor{\bsnm{Gruber},~\bfnm{Peter~M.}\binits{P.~M.}}
(\byear{1998}).
\btitle{Asymptotic Estimates for Best and Stepwise Approximation of Convex
  Bodies {IV}}.
\bjournal{Forum Mathematicum}
\bvolume{10}
\bpages{665--686}.
\bdoi{10.1515/form.10.6.665}
\end{barticle}
\endbibitem

\bibitem[\protect\citeauthoryear{Guo, Shamai and
  Verd{\'u}}{2005}]{GuoShamaiVerdu2005}
\begin{barticle}[author]
\bauthor{\bsnm{Guo},~\bfnm{Dongning}\binits{D.}},
  \bauthor{\bsnm{Shamai},~\bfnm{Shlomo}\binits{S.}} \AND
  \bauthor{\bsnm{Verd{\'u}},~\bfnm{Sergio}\binits{S.}}
(\byear{2005}).
\btitle{Mutual Information and Minimum Mean-Square Error in Gaussian Channels}.
\bjournal{IEEE Transactions on Information Theory}
\bvolume{51}
\bpages{1261--1282}.
\bdoi{10.1109/TIT.2005.844072}
\end{barticle}
\endbibitem

\bibitem[\protect\citeauthoryear{Guo et~al.}{2011}]{GuoWuShamaiVerdu2011}
\begin{barticle}[author]
\bauthor{\bsnm{Guo},~\bfnm{Dongning}\binits{D.}},
  \bauthor{\bsnm{Wu},~\bfnm{Yihong}\binits{Y.}},
  \bauthor{\bsnm{Shamai},~\bfnm{Shlomo}\binits{S.}} \AND
  \bauthor{\bsnm{Verd{\'u}},~\bfnm{Sergio}\binits{S.}}
(\byear{2011}).
\btitle{Estimation in Gaussian Noise: Properties of the Minimum Mean-Square
  Error}.
\bjournal{IEEE Transactions on Information Theory}
\bvolume{57}
\bpages{2371--2385}.
\bdoi{10.1109/TIT.2011.2111010}
\end{barticle}
\endbibitem

\bibitem[\protect\citeauthoryear{Hancart}{2026}]{Hancart2026}
\begin{bunpublished}[author]
\bauthor{\bsnm{Hancart},~\bfnm{Nathan}\binits{N.}}
(\byear{2026}).
\btitle{The Optimal Menu of Tests}.
\bnote{Working paper,
  \href{https://nathanhancart.com/documents/optimal_menu.pdf} {available
  online}}.
\end{bunpublished}
\endbibitem

\bibitem[\protect\citeauthoryear{Hirano and Porter}{2023}]{HiranoPorter2023}
\begin{bmisc}[author]
\bauthor{\bsnm{Hirano},~\bfnm{Keisuke}\binits{K.}} \AND
  \bauthor{\bsnm{Porter},~\bfnm{Jack~R.}\binits{J.~R.}}
(\byear{2023}).
\btitle{Asymptotic Representations for Sequential Decisions, Adaptive
  Experiments, and Batched Bandits}.
\bnote{\href{https://arxiv.org/abs/2302.03117}{arXiv:2302.03117}}.
\end{bmisc}
\endbibitem

\bibitem[\protect\citeauthoryear{Kedad-Sidhoum, Medvedev and
  Meunier}{2023}]{KedadSidhoumMedvedevMeunier2023}
\begin{bmisc}[author]
\bauthor{\bsnm{Kedad-Sidhoum},~\bfnm{Safia}\binits{S.}},
  \bauthor{\bsnm{Medvedev},~\bfnm{Anton}\binits{A.}} \AND
  \bauthor{\bsnm{Meunier},~\bfnm{Fr{\'e}d{\'e}ric}\binits{F.}}
(\byear{2023}).
\btitle{Finite Adaptability in Two-Stage Robust Optimization: Asymptotic
  Optimality and Tractability}.
\bnote{\href{https://arxiv.org/abs/2305.05399}{arXiv:2305.05399}}.
\end{bmisc}
\endbibitem

\bibitem[\protect\citeauthoryear{Kurtz}{2026}]{Kurtz2026}
\begin{barticle}[author]
\bauthor{\bsnm{Kurtz},~\bfnm{Jannis}\binits{J.}}
(\byear{2026}).
\btitle{Bounding the Optimal Number of Policies for Robust {$K$}-Adaptability}.
\bjournal{Mathematical Programming}.
\bnote{Published online 29 January 2026}.
\bdoi{10.1007/s10107-026-02329-1}
\end{barticle}
\endbibitem

\bibitem[\protect\citeauthoryear{Li and Zhao}{2025}]{LiZhao2025}
\begin{barticle}[author]
\bauthor{\bsnm{Li},~\bfnm{Xiaoou}\binits{X.}} \AND
  \bauthor{\bsnm{Zhao},~\bfnm{Hongru}\binits{H.}}
(\byear{2025}).
\btitle{Globally-Optimal Greedy Active Sequential Estimation}.
\bjournal{IEEE Transactions on Information Theory}
\bvolume{71}
\bpages{3871--3924}.
\bdoi{10.1109/TIT.2025.3551621}
\end{barticle}
\endbibitem

\bibitem[\protect\citeauthoryear{Rezaei, Wei and Han}{2026}]{RezaeiWeiHan2026}
\begin{bmisc}[author]
\bauthor{\bsnm{Rezaei},~\bfnm{Zolykha}\binits{Z.}},
  \bauthor{\bsnm{Wei},~\bfnm{Ningji}\binits{N.}} \AND
  \bauthor{\bsnm{Han},~\bfnm{Eojin}\binits{E.}}
(\byear{2026}).
\btitle{Scalable Finite Adaptability via Polyhedral Partition and Learning}.
\bnote{\href{https://arxiv.org/abs/2606.06927}{arXiv:2606.06927}}.
\end{bmisc}
\endbibitem

\bibitem[\protect\citeauthoryear{Subramanyam, Gounaris and
  Wiesemann}{2020}]{SubramanyamEtAl2020}
\begin{barticle}[author]
\bauthor{\bsnm{Subramanyam},~\bfnm{Anirudh}\binits{A.}},
  \bauthor{\bsnm{Gounaris},~\bfnm{Chrysanthos~E.}\binits{C.~E.}} \AND
  \bauthor{\bsnm{Wiesemann},~\bfnm{Wolfram}\binits{W.}}
(\byear{2020}).
\btitle{{$K$}-Adaptability in Two-Stage Mixed-Integer Robust Optimization}.
\bjournal{Mathematical Programming Computation}
\bvolume{12}
\bpages{193--224}.
\bdoi{10.1007/s12532-019-00174-2}
\end{barticle}
\endbibitem

\bibitem[\protect\citeauthoryear{Takanashi and
  McAlinn}{2026}]{TakanashiMcAlinn2026}
\begin{bmisc}[author]
\bauthor{\bsnm{Takanashi},~\bfnm{K{\=o}saku}\binits{K.}} \AND
  \bauthor{\bsnm{McAlinn},~\bfnm{Kenichiro}\binits{K.}}
(\byear{2026}).
\btitle{An Entropy--Energy Identity for Predictive {Kullback--Leibler} Regret
  in Infinitely Divisible Location Models}.
\bnote{\href{https://arxiv.org/abs/2605.27253}{arXiv:2605.27253}}.
\end{bmisc}
\endbibitem

\bibitem[\protect\citeauthoryear{van~der Vaart}{1998}]{vanDerVaart1998}
\begin{bbook}[author]
\bauthor{\bparticle{van~der} \bsnm{Vaart},~\bfnm{Aad~W.}\binits{A.~W.}}
(\byear{1998}).
\btitle{Asymptotic Statistics}.
\bpublisher{Cambridge University Press}, \baddress{Cambridge}.
\bdoi{10.1017/CBO9780511802256}
\end{bbook}
\endbibitem

\bibitem[\protect\citeauthoryear{Zhu and Joe}{2010}]{ZhuJoe2010}
\begin{barticle}[author]
\bauthor{\bsnm{Zhu},~\bfnm{Rong}\binits{R.}} \AND
  \bauthor{\bsnm{Joe},~\bfnm{Harry}\binits{H.}}
(\byear{2010}).
\btitle{Negative Binomial Time Series Models Based on Expectation Thinning
  Operators}.
\bjournal{Journal of Statistical Planning and Inference}
\bvolume{140}
\bpages{1874--1888}.
\bdoi{10.1016/j.jspi.2010.01.031}
\end{barticle}
\endbibitem

\end{thebibliography}


\begin{thebibliography}{3}

\bibitem[\protect\citeauthoryear{Carter}{2002}]{Carter2002}
\begin{barticle}[author]
\bauthor{\bsnm{Carter},~\bfnm{Andrew~V.}\binits{A.~V.}}
(\byear{2002}).
\btitle{Deficiency Distance between Multinomial and Multivariate Normal
  Experiments}.
\bjournal{The Annals of Statistics}
\bvolume{30}
\bpages{708--730}.
\bdoi{10.1214/aos/1028674839}
\end{barticle}
\endbibitem

\bibitem[\protect\citeauthoryear{Gill and Levit}{1995}]{GillLevit1995}
\begin{barticle}[author]
\bauthor{\bsnm{Gill},~\bfnm{Richard~D.}\binits{R.~D.}} \AND
  \bauthor{\bsnm{Levit},~\bfnm{Boris~Y.}\binits{B.~Y.}}
(\byear{1995}).
\btitle{Applications of the van {Trees} Inequality: A {Bayesian}
  {Cram\'er--Rao} Bound}.
\bjournal{Bernoulli}
\bvolume{1}
\bpages{59--79}.
\bdoi{10.2307/3318681}
\end{barticle}
\endbibitem

\bibitem[\protect\citeauthoryear{Zhu and Joe}{2010}]{ZhuJoe2010}
\begin{barticle}[author]
\bauthor{\bsnm{Zhu},~\bfnm{Rong}\binits{R.}} \AND
  \bauthor{\bsnm{Joe},~\bfnm{Harry}\binits{H.}}
(\byear{2010}).
\btitle{Negative Binomial Time Series Models Based on Expectation Thinning
  Operators}.
\bjournal{Journal of Statistical Planning and Inference}
\bvolume{140}
\bpages{1874--1888}.
\bdoi{10.1016/j.jspi.2010.01.031}
\end{barticle}
\endbibitem

\end{thebibliography}

\end{document}


\begin{frontmatter}
\pdfsubject{Supplement to a preprint manuscript}
\title{Supplement to Pre-Disclosure Experiment Menus:
Oracle-Relative Risk and Joint Sample--Menu Asymptotics}
\runtitle{Supplement to Pre-Disclosure Experiment Menus}

\begin{aug}
\author[A]{\fnms{Xinyu}~\snm{Song}\ead[label=e1]{song.xinyu@mail.shufe.edu.cn}}
\address[A]{School of Statistics and Data Science, Shanghai University of Finance and Economics\printead[presep={,\ }]{e1}}
\end{aug}

\begin{abstract}
This supplement proves the differentiated all-prior Gaussian transfer used in
the main paper.  It first proves a posterior Dirichlet identity for Markov
degradation semigroups and exact derivative formulas for binary attenuation
and Poisson thinning.  It then derives the all-prior risk and derivative
comparisons from prior-free local-likelihood, quadrature, edge-score, and
generator-calibration conditions.  Negative-binomial thinning verifies these
primitive conditions in an overdispersed count model.  The final sections give
the smooth-target transfer and the coupled fixed-radius radial-qubit baseline.
\end{abstract}
\end{frontmatter}

\section{Preliminaries}
\label{sec:s-prelim}

For an experiment indexed by an inverse-information coordinate \(v\), a prior
\(\pi\) on \([-H,H]\), and squared-error loss for a target \(g(h)\), write
\[
 r^g_{n,v}(\pi)=\E_\pi\Var_\pi\{g(h)\mid Y\}.
\]
The identity target is denoted by \(g(h)=h\).  The Gaussian comparison
experiment is \(Y=h+\sqrt v Z\), \(Z\sim N(0,1)\), and its posterior mean and
variance are written \(\mu_\pi(y)\) and \(w_\pi(y)\).  Its predictive density
is \(m_\pi(y)\).

For any continuous target \(g\) on \([-H,H]\), the
finite-parameter-interval minimax identity used below is
\begin{equation}
 R^g_{n,H}(v)=\sup_{\pi\in\cP([-H,H])}r^g_{n,v}(\pi).
 \label{eq:s-minimax}
\end{equation}
Clipping decisions to the compact range of \(g\) does not increase risk.  The
Gaussian, binary, Poisson, and negative-binomial experiments are dominated by
fixed measures,
their likelihoods are continuous in the local parameter in \(L^1\), and the
parameter interval is compact.  The compact statistical minimax theorem gives
\eqref{eq:s-minimax}.  Since Bayes risk is upper semicontinuous on the weakly
compact prior space, the supremum is attained.  These conclusions apply both
to the identity target and to every smooth local target used below.

\section{Posterior Dirichlet identity}
\label{sec:s-dirichlet}

We prove the posterior Dirichlet identity in the main paper.  Let
\(G=g(U)\), where \(U\) denotes the unknown quantity.  First suppose
the degradation semigroup has a finite or countable state space.  Let
\[
 \mu_s(x)=\Pp(X_s=x),\qquad
 \nu_s(x)=\E\{G\ind(X_s=x)\},\qquad
 m_s(x)=\nu_s(x)/\mu_s(x).
\]
At the differentiation point, assume \(\mu_s(z)>0\) whenever
\(\mu_s(x)q(x,z)\neq0\) for some \(x\).  Thus every posterior mean appearing
in a positive-flow term below is defined.  Both predictive measures evolve
under the same parameter-free forward equation,
\[
 \partial_s\mu_s(z)=\sum_x\mu_s(x)q(x,z),\qquad
 \partial_s\nu_s(z)=\sum_x\nu_s(x)q(x,z).
\]
The posterior explained variance is
\(\sum_z\nu_s(z)^2/\mu_s(z)\).  Absolute convergence and differentiation give
\begin{align*}
 \partial_s\sum_z\frac{\nu_s(z)^2}{\mu_s(z)}
 &=\sum_{x,z}\mu_s(x)q(x,z)
       \{2m_s(x)m_s(z)-m_s(z)^2\}\\
 &=-\sum_{x,z}\mu_s(x)q(x,z)
       \{m_s(z)-m_s(x)\}^2.
\end{align*}
The second line uses \(\sum_zq(x,z)=0\).
Bayes risk is the prior second moment of \(G\) minus posterior explained
variance, proving the jump identity.

For Brownian convolution, let \(\mu_s\) and \(\nu_s\) denote the predictive
density and the target-weighted predictive density.  They both solve the heat
equation \(\partial_sf=\frac12\Delta f\), and
\(m_s=\nu_s/\mu_s\).  Integration by parts gives
\[
 \partial_s\int\frac{\nu_s^2}{\mu_s}
 =-\int\mu_s\|\nabla m_s\|^2.
\]
Subtracting from the fixed prior second moment proves the heat identity.  The
required boundary terms vanish under the conditions in the proposition.

\section{Binary attenuation}
\label{sec:s-binary}

Fix \(t_0\) in a compact subset of \((-1,1)\).  For
\[
 x=c^{-2}\in K\Subset(t_0^2,\infty),\qquad
 t_n(h)=t_0+h/\sqrt n,\qquad |h|\leq H,
\]
observe \(n\) independent binary variables with
\[
 \Pp_h(Y_i=y)=\{1+yct_n(h)\}/2,\qquad y\in\{-1,1\}.
\]
The inverse Fisher information at \(t_0\) is
\[
 v=V_x:=x-t_0^2.
\]
Write \(r^B_{n,x}(\pi)\) for the Bayes risk for estimating \(h\).

\subsection{Exact derivative along the binary channel}

Let \(J_-\) be the number of plus observations among the first \(n-1\)
variables.  Given \(J_-=j\), let
\[
 \mu_j=\E_\pi(h\mid J_-=j),\qquad
 w_j=\Var_\pi(h\mid J_-=j).
\]

\begin{lemma}[Binary posterior derivative]
\label{lem:s-binary-derivative}
For every prior on \([-H,H]\),
\begin{equation}
 \partial_xr^B_{n,x}(\pi)
 =\E_{\pi,x}^{(n-1)}
 \left[
  \frac{w_{J_-}}
       {x-\{t_0+\mu_{J_-}/\sqrt n\}^2}
 \right]^2.
 \label{eq:s-binary-derivative}
\end{equation}
\end{lemma}

\begin{proof}
At an arbitrary starting value \(c_0>0\), parameterize an additional binary
symmetric channel by \(s\geq0\), with correlation
\(c_s=c_0e^{-2s}\) and \(x_s=c_s^{-2}=x_0e^{4s}\).  For a bounded target and a binary vector
\(Y\), let \(m(y)=\E(h\mid Y=y)\), and let \(Y^{(i)}\) be obtained by flipping
coordinate \(i\).  Differentiating the posterior explained variance when an
additional binary channel with flip probability \(q\) is applied, and pairing
the two orientations of each hypercube edge, gives
\begin{equation}
 \left.\frac{\dd}{\dd q}r_q(\pi)\right|_{q=0}
 =\sum_{i=1}^n\E\{m(Y)-m(Y^{(i)})\}^2.
 \label{eq:s-hypercube-dirichlet}
\end{equation}
Along the continuous-time channel, the same expression is
\(\partial_sr_s(\pi)\).

By exchangeability, the posterior mean depends only on the plus count.
Conditional on \(J_-=j\), write
\[
 p(h)=\{1+c(t_0+h/\sqrt n)\}/2,\qquad
 \bar p_j=\E_\pi\{p(h)\mid J_-=j\}.
\]
Appending a plus or a minus observation produces adjacent posterior means
\(m_{j+1}\) and \(m_j\), and the affine dependence of \(p(h)\) on \(h\) gives
\[
 m_{j+1}-m_j
 =\frac{\Cov_\pi\{h,p(h)\mid J_-=j\}}
        {\bar p_j(1-\bar p_j)}
 =\frac{cw_j}{2\sqrt n\,\bar p_j(1-\bar p_j)}.
\]
Since
\[
 4\bar p_j(1-\bar p_j)
 =1-c^2\{t_0+\mu_j/\sqrt n\}^2,
\]
the normalized adjacent slope
\[
 d_{x,j}=\frac{c\sqrt n}{2}(m_{j+1}-m_j)
\]
equals
\[
 d_{x,j}=\frac{w_j}{x-\{t_0+\mu_j/\sqrt n\}^2}.
\]
Equation \eqref{eq:s-hypercube-dirichlet}, \(dx/ds=4x\), and the \(j\)
downward and \(n-j\) upward edges show that
\[
 \partial_xr^B_{n,x}(\pi)
 =\E_{\pi,x}\left[
   \frac Jn d_{x,J-1}^2+
   \left(1-\frac Jn\right)d_{x,J}^2\right].
\]
The binomial recursion
\[
 M_n(j)(1-j/n)+M_n(j+1)(j+1)/n=M_{n-1}(j)
\]
combines the two orientations of each edge into the predictive law of
\(J_-\).  This proves \eqref{eq:s-binary-derivative}.
\end{proof}

\subsection{Uniform binary-to-Gaussian comparison}

Define the unbiased local statistic
\[
 T_{n,x}(J)=\frac{2J-n}{c\sqrt n}-\sqrt n\,t_0.
\]
At \(h=0\), its variance is \(V_x=x-t_0^2\).

\begin{lemma}[All-prior binary transfer]
\label{lem:s-binary-transfer}
There are constants \(C,c_0>0\), uniform for \(x\in K\), such that whenever
\(1\leq H=o(n^{1/6})\), for every prior on \([-H,H]\),
\begin{align}
 |r^B_{n,x}(\pi)-r^G_{V_x}(\pi)|
 &\leq C H^5n^{-1/2}+CH^2e^{-c_0H^2},                 \label{eq:s-binary-risk}\\
 \left|\partial_xr^B_{n,x}(\pi)
 -\E_\pi^G\{w_\pi(Y)/V_x\}^2\right|
 &\leq C H^7n^{-1/2}+CH^4e^{-c_0H^2}.                 \label{eq:s-binary-slope}
\end{align}
The centered comparison
\begin{align}
 &\left|\E_{\pi,x}^{(n-1)}
  \left[
   \frac{w_{J_-}}{x-\{t_0+\mu_{J_-}/\sqrt n\}^2}-1
  \right]^2
 -\E_\pi^G\{w_\pi(Y)/V_x-1\}^2\right| \notag\\
 &\hspace{25mm}\leq
 C H^7n^{-1/2}+CH^4e^{-c_0H^2}+CH^6n^{-1}
 \label{eq:s-binary-centered}
\end{align}
also holds.
\end{lemma}

\begin{proof}
Put
\[
 p_0=\{1+ct_0\}/2,\qquad p_h=p_0+\frac{ch}{2\sqrt n}.
\]
At a count for which \(T_{n,x}(J)=y\), a third-order Taylor expansion of the
exact binomial likelihood ratio gives
\begin{align}
 \log\frac{\Pp_h(J)}{\Pp_0(J)}
 &=\frac{hy}{V_x}-\frac{h^2}{2V_x}+R_{n,x}(h,y),
                                                        \label{eq:s-bin-lr}\\
 |R_{n,x}(h,y)|
 &\leq C\frac{|h|^2|y|+|h|^3}{\sqrt n}.                \notag
\end{align}
To see the coefficients, substitute
\(J=np_0+c\sqrt n\,y/2\) into
\[
 J\log(p_h/p_0)+(n-J)\log\{(1-p_h)/(1-p_0)\}.
\]
The compact restrictions on \(t_0\) and \(x\) bound all third derivatives.

The lattice of \(T_{n,x}\) has spacing \(a_n=2/(c\sqrt n)\).
Stirling's formula, uniformly for \(|y|=o(n^{1/6})\), gives
\begin{equation}
 \Pp_0\{T_{n,x}=y\}
 =a_n\phi_{V_x}(y)
 \left[1+O\{(1+|y|^3)/\sqrt n\}\right].
 \label{eq:s-bin-local-limit}
\end{equation}
This is the one-dimensional instance of the comparison used by
\citet{Carter2002}.  Combining \eqref{eq:s-bin-lr}--\eqref{eq:s-bin-local-limit}
with
\[
 \phi_{V_x}(y)\exp\{hy/V_x-h^2/(2V_x)\}
 =\phi_{V_x}(y-h)
\]
shows that, on the central lattice set
\(\mathcal C_n=\{y:|y|\leq C_1H\}\),
\begin{equation}
 e^{-\eps_n}a_n\phi_{V_x}(y-h)
 \leq \Pp_h\{T_{n,x}=y\}
 \leq e^{\eps_n}a_n\phi_{V_x}(y-h),
 \qquad
 \eps_n=CH^3/\sqrt n=o(1).
 \label{eq:s-bin-relative}
\end{equation}

Uniform binomial Bernstein bounds, and the corresponding Gaussian bound,
give
\begin{equation}
 \sup_{|h|\leq H}
 \Pp_h\{T_{n,x}\notin\mathcal C_n\}
 \leq Ce^{-c_0H^2}.
 \label{eq:s-bin-tail}
\end{equation}
Mixing \eqref{eq:s-bin-relative} over an arbitrary prior shows that its binary
predictive mass is within the same multiplicative factors of
\(a_nm_\pi(y)\).  After normalization, the binary posterior density relative
to the Gaussian posterior lies between \(e^{-2\eps_n}\) and
\(e^{2\eps_n}\).  Consequently, for a bounded function \(f\), the two
posterior expectations differ by at most
\((e^{2\eps_n}-1)\osc(f)\).  In particular, on \(\mathcal C_n\),
\begin{equation}
 |\mu_J-\mu_\pi(y)|\leq CH^4/\sqrt n,\qquad
 |w_J-w_\pi(y)|\leq CH^5/\sqrt n.
 \label{eq:s-bin-moments}
\end{equation}

It remains to compare the predictive sum with the Gaussian integral uniformly
over the prior.  Gaussian posterior differentiation gives
\[
 m_\pi'=m_\pi(\mu_\pi-y)/V_x,\qquad
 \mu_\pi'=w_\pi/V_x,\qquad
 w_\pi'=\kappa_{3,\pi}/V_x,
\]
where \(\kappa_{3,\pi}\) is the posterior third central moment.  Since
\(w_\pi\leq H^2\), \(|\kappa_{3,\pi}|\leq CH^3\), and
\(\int m_\pi|\mu_\pi-y|\leq C\), these identities yield the sufficient
total-variation bounds.  The last inequality follows from
\(\E|\mu_\pi(Y)-Y|\leq\E|h-Y|\) in the Gaussian experiment.  Thus
\begin{align}
 \int_\R |\partial_y(m_\pi w_\pi)|\,\dd y
 &\leq C(1+H^3),                                      \label{eq:s-bin-tv1}\\
 \int_\R \left|\partial_y\left[
 m_\pi\{w_\pi/V_x\}^2\right]\right|\,\dd y
 &\leq C(1+H^5),                                      \label{eq:s-bin-tv2}\\
 \int_\R \left|\partial_y\left[
 m_\pi\{w_\pi/V_x-1\}^2\right]\right|\,\dd y
 &\leq C(1+H^5).                                      \label{eq:s-bin-tv3}
\end{align}
For every absolutely continuous integrable \(f\), the rectangle-rule error
on a lattice of spacing \(a_n\) is at most
\(a_n\int|f'|\), apart from Gaussian tails beyond the finite binomial lattice,
which are exponentially small uniformly over the local prior mixture.

The binary Bayes risk is its predictive expectation of posterior variance.
Equations \eqref{eq:s-bin-relative}--\eqref{eq:s-bin-tv1} give the central
error \(O(H^5/\sqrt n)\), and \eqref{eq:s-bin-tail} gives the tail term in
\eqref{eq:s-binary-risk}.  For the derivative, note uniformly on the central
set that
\[
 x-\{t_0+\mu_J/\sqrt n\}^2=V_x+O(H/\sqrt n).
\]
Squaring \eqref{eq:s-bin-moments}, using
\eqref{eq:s-bin-tv2}--\eqref{eq:s-bin-tv3}, and bounding both posterior
variances by \(H^2\), gives the \(O(H^7/\sqrt n)\) central error and the
\(O(H^4e^{-c_0H^2})\) tail error in
\eqref{eq:s-binary-slope} and \eqref{eq:s-binary-centered}.

It remains to justify the leave-one-out comparison in
\eqref{eq:s-binary-slope}--\eqref{eq:s-binary-centered}.  Put
\(m=n-1\), \(\lambda_n=m/n\), and define
\[
 T^-_{n,x}(J_-)
 =\frac{2J_--m}{c\sqrt n}-\frac{m}{\sqrt n}t_0.
\]
Under the local parameter \(h\),
\[
 \E_hT^-_{n,x}=\lambda_n h,
 \qquad
 \Var_0(T^-_{n,x})=\lambda_n V_x.
\]
Let \(\mathcal G^-_{n,x}\) be the Gaussian experiment
\[
 Y^-_{n,x}=\lambda_n h+\sqrt{\lambda_n V_x}\,Z,
 \qquad Z\sim N(0,1).
\]
Its log-likelihood ratio relative to \(h=0\) is exactly
\[
 \log\frac{\dd G^-_{n,x,h}}{\dd G^-_{n,x,0}}(y)
 =\frac{hy}{V_x}-\frac{\lambda_nh^2}{2V_x}.
\]

The binomial likelihood based on \(J_-\) has the same two leading terms.
Indeed, substituting
\(J_-=mp_0+c\sqrt n\,y/2\) into its exact likelihood ratio and expanding to
third order gives
\[
 \log\frac{\Pp_h^{(m)}(J_-)}{\Pp_0^{(m)}(J_-)}
 =\frac{hy}{V_x}-\frac{\lambda_nh^2}{2V_x}
   +R^-_{n,x}(h,y),
\]
where, uniformly for \(x\in K\),
\[
 |R^-_{n,x}(h,y)|
 \leq C\left\{
   \frac{|h|^2|y|+|h|^3}{\sqrt n}+\frac{h^2}{n}
 \right\}.
\]
The lattice spacing remains \(a_n=2/(c\sqrt n)\), and Stirling's formula
now yields
\[
 \Pp_0^{(m)}\{T^-_{n,x}=y\}
 =a_n\phi_{\lambda_nV_x}(y)
  \left[1+O\{(1+|y|^3)/\sqrt n\}\right]
\]
uniformly for \(|y|=o(n^{1/6})\).  Consequently, on
\(\mathcal C_n^- =\{y:|y|\leq C_1H\}\), the binomial mass is within multiplicative
factors \(e^{\pm\eps_n^-}\) of
\(a_n\phi_{\lambda_nV_x}(y-\lambda_nh)\), where
\[
 \eps_n^-\leq C(H^3/\sqrt n+H^2/n).
\]
The binomial and Gaussian probabilities outside \(\mathcal C_n^-\) are bounded by
\(Ce^{-c_0H^2}\), uniformly in \(|h|\leq H\).

These inequalities remain valid after mixing over an arbitrary prior.
Dividing the mixed likelihoods by their predictive normalizing constants
shows that, on \(\mathcal C_n^-\), the binary and \(\mathcal G^-_{n,x}\) posterior density
ratios lie between \(e^{-2\eps_n^-}\) and
\(e^{2\eps_n^-}\).  The posterior-mean and posterior-variance comparisons in
\eqref{eq:s-bin-moments}, as well as the rectangle-rule bounds
\eqref{eq:s-bin-tv2}--\eqref{eq:s-bin-tv3}, therefore apply with \(J\)
replaced by \(J_-\) and with the same stated orders.

Finally, dividing \(Y^-_{n,x}\) by \(\lambda_n\) identifies
\(\mathcal G^-_{n,x}\) with a Gaussian location experiment of variance
\(V_x/\lambda_n\).  On \(|h|,|y|\leq C H\), its log-likelihood ratio differs
from that of variance \(V_x\) by at most \(CH^2/n\).  The corresponding
Gaussian benchmark densities also satisfy
\[
 \left|
 \log\frac{\phi_{V_x/\lambda_n}(y-h)}
               {\phi_{V_x}(y-h)}
 \right|
 \leq \frac{C(1+H^2)}{n}.
\]
Thus the full conditional densities, and not only their likelihood ratios,
are multiplicatively comparable.  Mixing over any prior preserves this
comparison, and posterior normalization at most doubles its exponent.  Since
the centered and uncentered squared normalized posterior-variance functionals
are bounded by \(CH^4\), their predictive expectations differ by
\(O(H^6/n)\); the tails remain exponentially small.  Together with
\[
 x-\{t_0+\mu_{J_-}/\sqrt n\}^2=V_x+O(H/\sqrt n)
\]
on the central set, this proves the leave-one-out bounds in
\eqref{eq:s-binary-slope} and \eqref{eq:s-binary-centered}.
\end{proof}

\begin{proposition}[Binary verification of the transfer conditions]
\label{prop:s-binary-GT}
The binary family satisfies the differentiated Gaussian-transfer assumption
of the main paper with the errors in
\eqref{eq:s-binary-risk}--\eqref{eq:s-binary-slope}.  Let
\(\delta_n\downarrow0\), and suppose that
\([x,x+\delta_n]\) remains in a common compact interior subset of \(K\).  If
\[
 H_n=\min\{\delta_n^{-1/20},n^{1/44}\},
\]
then
\[
 R^B_{n,H_n}(x+\delta_n)-R^B_{n,H_n}(x)
 =\delta_n\{1+o(1)\}
\]
uniformly on compact interior \(x\)-sets.
\end{proposition}

\begin{proof}
The compact minimax identity is \eqref{eq:s-minimax}.
 Lemma~\ref{lem:s-binary-derivative} gives the required nonnegative derivative.
The square-root integrand in that representation is bounded by \(CH^2\) on
compact inverse-information sets, so the derivative is bounded by \(CH^4\).
Lemma~\ref{lem:s-binary-transfer} gives the
all-prior risk and derivative comparisons.  The slow radius satisfies
\[
 H_n^{11}/\sqrt n\to0,\qquad H_n^{10}\delta_n\to0,
\]
so the differential transfer theorem in the main paper applies.
\end{proof}

\section{Poisson thinning}
\label{sec:s-poisson}

Fix \(\theta_0>0\).  At degradation level \(x>0\), observe
\[
 Y_i\sim\operatorname{Poisson}\{(\theta_0+h/\sqrt n)/x\},
 \qquad |h|\leq H.
\]
The total count \(J=\sum_iY_i\) has conditional mean
\[
 \Lambda_h/x,\qquad \Lambda_h=n\theta_0+\sqrt n\,h,
\]
and the inverse Fisher information is \(v=x\theta_0\).

\subsection{Exact derivative along the thinning semigroup}

\begin{lemma}[Poisson posterior derivative]
\label{lem:s-poisson-derivative}
Let \(\mu_j\) and \(w_j\) be the posterior mean and variance after total count
\(j\).  Then
\begin{equation}
 \partial_vr^P_{n,v}(\pi)
 =\E_{\pi,v}\left[
   \frac{w_J}{v\sqrt{1+\mu_J/(\theta_0\sqrt n)}}
 \right]^2.
 \label{eq:s-poisson-derivative}
\end{equation}
\end{lemma}

\begin{proof}
Put \(s=\log x\), and let \(p_j(s\mid h)\) be the conditional total-count
probability.  The Poisson pure-death equation is
\[
 \partial_sp_j(s\mid h)=(j+1)p_{j+1}(s\mid h)-jp_j(s\mid h).
\]
Let \(M_j=\int p_j(s\mid h)\pi(\dd h)\), let
\(N_j=\int h p_j(s\mid h)\pi(\dd h)\), and put \(m_j=N_j/M_j\).
Differentiation of the posterior explained variance
\(\sum_jN_j^2/M_j\), followed by pairing the mass moving from \(j+1\) to
\(j\), gives
\begin{equation}
 \partial_sr^P_{n,v}(\pi)
 =\sum_{j\geq1}jM_j(m_j-m_{j-1})^2.
 \label{eq:s-poisson-dirichlet}
\end{equation}
The posterior at count \(j\) is the posterior at count \(j-1\) tilted by
the affine function \(\Lambda_h\).  Therefore
\[
 m_j-m_{j-1}
 =\frac{\sqrt n\,w_{j-1}}
        {n\theta_0+\sqrt n\,\mu_{j-1}}.
\]
The Poisson recursion also gives
\[
 jM_j=\frac{n}{x}\{\theta_0+\mu_{j-1}/\sqrt n\}M_{j-1}.
\]
Since \(\partial_v=v^{-1}\partial_s\), substitution in
\eqref{eq:s-poisson-dirichlet} and an index shift prove
\eqref{eq:s-poisson-derivative}.
\end{proof}

\subsection{Uniform Poisson-to-Gaussian comparison}

Define
\begin{equation}
 T_{n,x}(J)=\frac{xJ-n\theta_0}{\sqrt n}.
 \label{eq:s-poisson-statistic}
\end{equation}
At \(h=0\), this statistic has variance \(v=x\theta_0\).

\begin{lemma}[All-prior Poisson transfer]
\label{lem:s-poisson-transfer}
Let \(\theta_0\) and \(x\) range over compact subsets of
\((0,\infty)\).  Whenever \(1\leq H=o(n^{1/6})\), uniformly over all priors
on \([-H,H]\),
\begin{align}
 |r^P_{n,v}(\pi)-r_v^G(\pi)|
 &\leq CH^5n^{-1/2}+CH^2e^{-cH^2},                     \label{eq:s-poisson-risk}\\
 \left|\partial_vr^P_{n,v}(\pi)
       -\E_\pi^G\{w_\pi(Y)/v\}^2\right|
 &\leq CH^7n^{-1/2}+CH^4e^{-cH^2}.                     \label{eq:s-poisson-slope}
\end{align}
\end{lemma}

\begin{proof}
It suffices to consider \(H\geq1\).  The statistic in
\eqref{eq:s-poisson-statistic} lies on
\[
 \mathcal L_n=\{xj/\sqrt n-\theta_0\sqrt n:j=0,1,\ldots\}
\]
with spacing \(a_n=x/\sqrt n\).  Let
\(\mathcal C_n=\{y\in\mathcal L_n:|y|\leq C_0H\}\), for a fixed \(C_0>1\).

At \(T_{n,x}(J)=y\), direct expansion of the likelihood ratio gives
\begin{align}
 \log\frac{p_h(J)}{p_0(J)}
 &=\frac{hy}{v}-\frac{h^2}{2v}+R_{n,x}(h,y),             \label{eq:s-pois-lr}\\
 |R_{n,x}(h,y)|
 &\leq C\frac{|h|^2|y|+|h|^3}{\sqrt n}.                 \notag
\end{align}
Indeed, substitute
\(J=n\theta_0/x+\sqrt n\,y/x\) into
\[
 J\log\{1+h/(\theta_0\sqrt n)\}-\sqrt n\,h/x.
\]
On \(\mathcal C_n\), Stirling's formula with remainder gives
\begin{equation}
 \left|\log\frac{p_0(J)}{a_n\phi_v(y)}\right|
 \leq C\frac{1+|y|^3}{\sqrt n}.
 \label{eq:s-pois-local-limit}
\end{equation}
For example, with
\(\lambda_0=n\theta_0/x\) and \(z=(J-\lambda_0)/\sqrt{\lambda_0}=y/\sqrt v\),
the first terms are
\[
 \log\frac{p_0(J)}{a_n\phi_v(y)}
 =\frac{z^3-3z}{6\sqrt{\lambda_0}}
 +O\{(1+z^2+z^4)/\lambda_0\}.
\]
Equations \eqref{eq:s-pois-lr}--\eqref{eq:s-pois-local-limit} imply
\begin{equation}
 e^{-\eps_n}a_n\phi_v(y-h)
 \leq p_h(J)\leq e^{\eps_n}a_n\phi_v(y-h),
 \qquad \eps_n=CH^3/\sqrt n=o(1),
 \label{eq:s-pois-relative}
\end{equation}
uniformly on \(\mathcal C_n\).

The two-sided Poisson Chernoff inequality gives
\[
 \sup_{|h|\leq H}\Pp_h\{T_{n,x}\notin\mathcal C_n\}
 \leq Ce^{-cH^2}.
\]
The same bound holds for the Gaussian experiment, and mixture over any prior
preserves it.  Integrating \eqref{eq:s-pois-relative} over the prior and
normalizing shows that the Poisson posterior, relative to the Gaussian
posterior at \(y\), has density between \(e^{-2\eps_n}\) and
\(e^{2\eps_n}\).  Hence, on \(\mathcal C_n\),
\begin{equation}
 |\mu_J-\mu_\pi(y)|\leq CH^4/\sqrt n,\qquad
 |w_J-w_\pi(y)|\leq CH^5/\sqrt n.
 \label{eq:s-pois-moments}
\end{equation}

For every absolutely continuous integrable \(f\), the half-lattice obeys
\begin{equation}
 \left|a_n\sum_{y\in\mathcal L_n}f(y)-\int_\R f(y)\,\dd y\right|
 \leq a_n\int_\R|f'(y)|\,\dd y
 +\int_{-\infty}^{-\theta_0\sqrt n}|f(y)|\,\dd y.
 \label{eq:s-half-lattice}
\end{equation}
For the Gaussian prior mixtures below, the second term is polynomial in
\(H\) times \(e^{-cn}\).  Gaussian posterior differentiation gives
\[
 m_\pi'=m_\pi(\mu_\pi-y)/v,\qquad
 \mu_\pi'=w_\pi/v,\qquad
 w_\pi'=\kappa_{3,\pi}/v.
\]
Using \(w_\pi\leq H^2\), \(|\kappa_{3,\pi}|\leq CH^3\), and integrating
these identities yields
\begin{align}
 \int_\R|\partial_y(m_\pi w_\pi)|\,\dd y
 &\leq C(1+H^3),                                       \label{eq:s-pois-tv1}\\
 \int_\R\left|\partial_y[
 m_\pi\{w_\pi/v\}^2]\right|\,\dd y
 &\leq C(1+H^5).                                       \label{eq:s-pois-tv2}
\end{align}

The Poisson Bayes risk is \(\sum_JM_Jw_J\).  On the central set,
\eqref{eq:s-pois-relative}, \eqref{eq:s-pois-moments}, and
\eqref{eq:s-half-lattice}--\eqref{eq:s-pois-tv1} compare it with
\(\int m_\pi w_\pi\) at error \(O(H^5/\sqrt n)\).  The tails contribute
\(O(H^2e^{-cH^2})\), proving \eqref{eq:s-poisson-risk}.

For \eqref{eq:s-poisson-slope}, square the posterior-variance comparison in
\eqref{eq:s-pois-moments}.  Since both variances are at most \(H^2\), the
central error is \(O(H^7/\sqrt n)\), including the predictive-mass error.
Furthermore,
\[
 \left|\{1+\mu_J/(\theta_0\sqrt n)\}^{-1}-1\right|
 \leq CH/\sqrt n.
\]
Thus replacing the denominator in
\eqref{eq:s-poisson-derivative} contributes \(O(H^5/\sqrt n)\).  The
derivative integrands are bounded by \(CH^4\), so the two tails contribute
\(CH^4e^{-cH^2}\).  Finally use
\eqref{eq:s-half-lattice} and \eqref{eq:s-pois-tv2}.
\end{proof}

\begin{proposition}[Poisson verification of the transfer conditions]
\label{prop:s-poisson-GT}
The Poisson family satisfies the differentiated Gaussian-transfer assumption
of the main paper with the errors in
\eqref{eq:s-poisson-risk}--\eqref{eq:s-poisson-slope}.  Let
\(\delta_n\downarrow0\), and suppose that
\([v,v+\delta_n]\) remains in a common compact interior subset of
\((0,\infty)\).  With
\[
 H_n=\min\{\delta_n^{-1/20},n^{1/44}\},
\]
its local minimax risk satisfies
\[
 R^P_{n,H_n}(v+\delta_n)-R^P_{n,H_n}(v)
 =\delta_n\{1+o(1)\}
\]
uniformly on compact interior \(v\)-sets.
\end{proposition}

\begin{proof}
 Equation \eqref{eq:s-minimax} gives Bayes--minimax duality.
Lemma~\ref{lem:s-poisson-derivative} gives the nonnegative posterior derivative;
on the stated local range, its square-root integrand is bounded by \(CH^2\),
so the derivative is bounded by \(CH^4\).  Lemma~\ref{lem:s-poisson-transfer}
gives the all-prior Gaussian comparison.  The exponent conditions are the same as in
Proposition~\ref{prop:s-binary-GT}.
\end{proof}

\section{Primitive likelihood-generator conditions}
\label{sec:s-primitive}

This section derives the all-prior comparisons from conditions on the
experiment laws and their degradation generator.  The conditions do not
involve a prior, a posterior distribution, or a least-favorable sequence.

Let \(\cK\Subset(0,\infty)\) be a compact interval of inverse-information
levels.  After a fixed measurable identification, suppose that the experiment
at level \(v\in\cK\) has a common countable state space \(\mathcal Z_n\) and
mass function
\[
 p_{n,v,h}(z),\qquad z\in\mathcal Z_n,\quad |h|\leq H.
\]
Increasing \(v\) is assumed to be a parameter-free Markov degradation with
generator
\[
 \mathcal L_{n,v}f(z)
 =\sum_{z'}q_{n,v}(z,z')\{f(z')-f(z)\},
\]
so that \(\partial_vp_{n,v,h}=p_{n,v,h}\mathcal L_{n,v}\).
We assume common positive support along every active generator edge.

Let \(t_{n,v}:\mathcal Z_n\to\R\) embed the sufficient statistic into the
Gaussian coordinate.  Associate \(z\) with a quadrature cell
\(Q_{n,v}(z)\), whose length is \(a_{n,v}(z)\), such that the cell interiors
are disjoint and \(t_{n,v}(z)\in\overline{Q_{n,v}(z)}\).  Their union, up to
Lebesgue-null endpoints, is an interval \(I_{n,v}\).  Write
\[
 g_{v,h}(y)=\phi_v(y-h),\qquad
 \mathcal C_{n,H}=\{z:|t_{n,v}(z)|\leq C_0H\},
\]
where \(\phi_v\) is the \(N(0,v)\) density.  All constants below are uniform
over \(v\in\cK\).

\medskip\noindent\textbf{PLG1: multiplicative local Gaussian comparison.}
There is an \(\eps_n(H)\leq1\) such that, for every \(|h|\leq H\) and
\(z\in\mathcal C_{n,H}\),
\begin{equation}
 \left|
 \log\frac{p_{n,v,h}(z)}
 {a_{n,v}(z)g_{v,h}\{t_{n,v}(z)\}}
 \right|\leq\eps_n(H).
 \label{eq:s-PLG1}
\end{equation}

\medskip\noindent\textbf{PLG2: probability and energy tails.}
For a deterministic \(\tau_n(H)\),
\begin{align}
 &\sup_{|h|\leq H}\Pp_{n,v,h}(\mathcal C_{n,H}^c)
 +\sup_{|h|\leq H}\Pp^G_{v,h}(|Y|>C_0H) \notag\\
 &\quad
 +\sup_{|h|\leq H}\sum_{z\notin\mathcal C_{n,H}}
 a_{n,v}(z)g_{v,h}\{t_{n,v}(z)\}
 \leq\tau_n(H).
 \label{eq:s-PLG2a}
\end{align}
The energy-tail requirement is stated after the edge envelope is defined
below.

\medskip\noindent\textbf{PLG3: prior-free quadrature.}
Suppose
\[
 \sup_z\operatorname{diam}\{Q_{n,v}(z)\}\leq\kappa_n.
\]
Then every absolutely continuous integrable \(f\) satisfies
\begin{equation}
 \left|
 \sum_z a_{n,v}(z)f\{t_{n,v}(z)\}
 -\int_{I_{n,v}}f(y)\,\dd y
 \right|
 \leq\kappa_n\int_{I_{n,v}}|f'(y)|\,\dd y.
 \label{eq:s-PLG3a}
\end{equation}
We also require
\begin{equation}
 \sup_{|h|\leq H}\Pp^G_{v,h}(Y\notin I_{n,v})\leq b_n(H).
 \label{eq:s-PLG3b}
\end{equation}
The inequality in \eqref{eq:s-PLG3a} follows by applying the fundamental
theorem of calculus on each cell.  For a uniform lattice
\(y_j=y_0+ja_n\), we use the left cells
\(Q_j=[y_j,y_j+a_n)\).  A finite lattice \(j=0,\ldots,N\) therefore covers
\([y_0,y_N+a_n]\), and a half-lattice \(j\geq0\) covers
\([y_0,\infty)\).  This convention counts every endpoint mass once.

\medskip\noindent\textbf{PLG4: edge score and calibration.}
For an active edge \(e=(z,z')\), define
\[
 \ell_e(h)=\log\frac{p_{n,v,h}(z')}{p_{n,v,h}(z)}
 =\ell_e(0)+\alpha_eh+\rho_e(h),
 \qquad \rho_e(0)=\rho_e'(0)=0,
\]
and let
\[
 \omega_e(H)=\sup_{|h|\leq H}|\rho_e(h)|,\qquad
 B_e(H)=\alpha_e^2H^2+\omega_e(H).
\]
Assume that
\begin{equation}
 |\alpha_e|H+2\omega_e(H)\leq c_0
 \label{eq:s-PLG4a}
\end{equation}
for every active edge with origin in \(\mathcal C_{n,H}\), where \(c_0\)
is fixed.  Put
\begin{equation}
 \Xi_{n,v,H}(z)
 =\sum_{z'}q_{n,v}(z,z')
 \left\{|\alpha_e|H^3B_e(H)+H^2B_e(H)^2\right\}.
 \label{eq:s-Xi}
\end{equation}
On the central set, require
\begin{align}
 \left|\sum_{z'}q_{n,v}(z,z')\alpha_e^2-v^{-2}\right|
 &\leq\gamma_n(H),                                      \label{eq:s-PLG4b}\\
 \Xi_{n,v,H}(z)&\leq\chi_n(H).                           \label{eq:s-PLG4c}
\end{align}

To complete PLG2, let
\[
 \Delta_e(H)=\osc_{|h|\leq H}\ell_e(h)
\]
and define the deterministic edge-oscillation envelope
\begin{equation}
 G_{n,v,H}(z)
 =4H^2\sum_{z'}q_{n,v}(z,z')
       \min\{1,[\exp\{\Delta_e(H)\}-1]^2\}.
 \label{eq:s-energy-envelope}
\end{equation}
We assume
\begin{equation}
 \sup_{|h|\leq H}
 \E_{n,v,h}\{\ind_{\mathcal C_{n,H}^c}
             G_{n,v,H}(Z)\}
 \leq\tau_n^\Gamma(H).
 \label{eq:s-PLG2b}
\end{equation}
Every quantity in PLG1--PLG4 is determined by the sampling laws and the
degradation generator.

\begin{remark}[A regular-rate sufficient condition]
\label{rem:s-regular-PLG}
Suppose that, on the central set,
\[
 \sum_{z'}q_{n,v}(z,z')\leq Cn,\qquad
 \max_{z':q(z,z')>0}|\alpha_{z,z'}|\leq Cn^{-1/2},
\qquad
 \max_{z':q(z,z')>0}\omega_{z,z'}(H)\leq CH^2/n.
\]
If \(H=o(\sqrt n)\), then \eqref{eq:s-PLG4a} holds for all sufficiently
large \(n\), and
\[
 \Xi_{n,v,H}(z)
 \leq C\{H^5/\sqrt n+H^6/n\}.
\]
Thus the remaining central calculation is the scalar calibration in
\eqref{eq:s-PLG4b}.  Binary, Poisson, and negative-binomial thinning all
have this regular local scale.
\end{remark}

\begin{lemma}[Uniform edge-tilt expansion]
\label{lem:s-edge-tilt}
Let \(P\) be a probability law on \([-H,H]\), with mean \(\mu\) and
variance \(w\), and let
\[
 \frac{\dd P_e}{\dd P}(h)
 \ \propto\ \exp\{\alpha h+\rho(h)\}.
\]
If \(|\alpha|H+\osc(\rho)\leq c\), then
\begin{equation}
 \left|\E_eh-\mu-\alpha w\right|
 \leq C_cH\{\alpha^2H^2+\osc(\rho)\}.
 \label{eq:s-edge-tilt}
\end{equation}
\end{lemma}

\begin{proof}
Subtracting a constant from \(\rho\) does not change the tilted law.  Center
at \(\mu\) and set
\[
 u(h)=\alpha(h-\mu)+\rho(h)-\rho(\mu).
\]
Then \(\|u\|_\infty\leq2c\), and Taylor's theorem, including normalization
by \(\E\exp(u)\), gives
\[
 \frac{\exp\{u(h)\}}{\E\exp(u)}
 =1+\alpha(h-\mu)+R(h),\qquad
 \|R\|_\infty
 \leq C_c\{\alpha^2H^2+\osc(\rho)\}.
\]
Taking covariance with \(h\), the linear term gives \(\alpha w\).  The
remaining covariance is bounded by the right-hand side of
\eqref{eq:s-edge-tilt}.  The normalization is bounded away from zero by a
constant depending only on \(c\).
\end{proof}

For a prior \(\pi\), write \(\mu_\pi^E(z)\) and \(w_\pi^E(z)\) for the
posterior mean and variance of \(h\).  Adjacent posteriors are related by the
edge tilt in Lemma~\ref{lem:s-edge-tilt}.  Hence
\begin{equation}
 \left|\mu_\pi^E(z')-\mu_\pi^E(z)-\alpha_ew_\pi^E(z)\right|
 \leq CHB_e(H).
 \label{eq:s-edge-posterior}
\end{equation}
Writing
\[
 \Gamma_{n,v}m(z)
 =\sum_{z'}q_{n,v}(z,z')\{m(z')-m(z)\}^2,
\]
expansion of the square in \eqref{eq:s-edge-posterior} yields
\begin{equation}
 \left|
 \Gamma_{n,v}\mu_\pi^E(z)
 -w_\pi^E(z)^2
  \sum_{z'}q_{n,v}(z,z')\alpha_e^2
 \right|
 \leq C\Xi_{n,v,H}(z).
 \label{eq:s-edge-energy}
\end{equation}
Here \(\Xi_{n,v,H}\) is the prior-free remainder in
\eqref{eq:s-Xi}.
The tail envelope in \eqref{eq:s-energy-envelope} follows from the same
tilt relation.  Indeed, two laws whose likelihood ratio has log-oscillation
\(\Delta\) differ in total variation by at most
\(\min\{1,\exp(\Delta)-1\}\).  Since \(|h|\leq H\), this bounds each squared
adjacent posterior-mean difference by the corresponding summand in
\eqref{eq:s-energy-envelope}.

\begin{lemma}[Gaussian-mixture variation]
\label{lem:s-gaussian-variation}
For a prior \(\pi\) on \([-H,H]\), let \(m_\pi^G\), \(\mu_\pi^G\),
\(w_\pi^G\), and \(\kappa_{3,\pi}^G\) denote the predictive density,
posterior mean, posterior variance, and posterior third central moment in
\(Y=h+\sqrt vZ\).  Uniformly over \(v\in\cK\),
\begin{align}
 \int\left|\partial_y(m_\pi^Gw_\pi^G)\right|\,\dd y
 &\leq C_{\cK}(1+H^3),                                  \label{eq:s-gauss-var1}\\
 \int\left|\partial_y\left[
 m_\pi^G\{w_\pi^G/v\}^2\right]\right|\,\dd y
 &\leq C_{\cK}(1+H^5).                                  \label{eq:s-gauss-var2}
\end{align}
Moreover,
\[
 \|m_\pi^Gw_\pi^G\|_\infty\leq C_{\cK}H^2,\qquad
 \|m_\pi^G(w_\pi^G/v)^2\|_\infty\leq C_{\cK}H^4.
\]
\end{lemma}

\begin{proof}
Gaussian posterior differentiation gives
\[
 (m_\pi^G)'=m_\pi^G(\mu_\pi^G-y)/v,\qquad
 (\mu_\pi^G)'=w_\pi^G/v,\qquad
 (w_\pi^G)'=\kappa_{3,\pi}^G/v.
\]
The support bound gives \(w_\pi^G\leq H^2\) and
\(|\kappa_{3,\pi}^G|\leq8H^3\).  Also,
\[
 \int m_\pi^G(y)|\mu_\pi^G(y)-y|\,\dd y
 \leq\E|h-Y|=\sqrt{2v/\pi}.
\]
The product rule proves \eqref{eq:s-gauss-var1}--\eqref{eq:s-gauss-var2}.
The sup-norm bounds follow from
\(m_\pi^G\leq(2\pi v)^{-1/2}\).
\end{proof}

\begin{theorem}[Primitive differentiated transfer]
\label{thm:s-primitive-transfer}
Assume PLG1--PLG4, the posterior Dirichlet identity in Section
\ref{sec:s-dirichlet}, and compact Bayes--minimax duality.  Uniformly over
all priors \(\pi\) on \([-H,H]\),
\begin{align}
 |r^E_{n,v}(\pi)-r_v^G(\pi)|
 &\leq\eta_{0,n}(H),                                    \label{eq:s-primitive-risk}\\
 \left|\partial_vr^E_{n,v}(\pi)
 -\E_\pi^G\{w_\pi^G(Y)/v\}^2\right|
 &\leq\eta_{1,n}(H),                                    \label{eq:s-primitive-derivative}
\end{align}
where
\begin{align}
 \eta_{0,n}(H)
 &\leq C\left[H^2\eps_n(H)+\kappa_n(1+H^3)
 +H^2\{\tau_n(H)+b_n(H)\}\right],                       \label{eq:s-primitive-eta0}\\
 \eta_{1,n}(H)
 &\leq C\left[H^4\eps_n(H)+\kappa_n(1+H^5)
 +H^4\{\tau_n(H)+b_n(H)+\gamma_n(H)\}\right. \notag\\
 &\hspace{33mm}\left.
 +\chi_n(H)+\tau_n^\Gamma(H)\right].                    \label{eq:s-primitive-eta1}
\end{align}
In addition,
\begin{equation}
 0\leq\partial_vr^E_{n,v}(\pi)
 \leq C_{\cK}H^4+C\eta_{1,n}(H).
 \label{eq:s-primitive-GT1}
\end{equation}
Thus, when \(H\geq1\) and \(\eta_{1,n}(H)\to0\), the primitive conditions
imply the differentiated Gaussian-transfer assumption in the main paper.
\end{theorem}

\begin{proof}
Fix a prior \(\pi\).  Mixing \eqref{eq:s-PLG1} over \(\pi\) preserves its
multiplicative bounds.  Dividing the conditional and predictive likelihood
comparisons shows that, on the central set, the experiment posterior relative
to the Gaussian posterior at \(t_{n,v}(z)\) has density between
\(\exp\{-2\eps_n(H)\}\) and \(\exp\{2\eps_n(H)\}\).  Since \(|h|\leq H\),
\begin{align}
 |\mu_\pi^E(z)-\mu_\pi^G\{t_{n,v}(z)\}|
 &\leq CH\eps_n(H),                                     \label{eq:s-post-mean}\\
 |w_\pi^E(z)-w_\pi^G\{t_{n,v}(z)\}|
 &\leq CH^2\eps_n(H).                                   \label{eq:s-post-var}
\end{align}

Bayes risk is predictive expected posterior variance.  The posterior
comparisons in \eqref{eq:s-post-mean}--\eqref{eq:s-post-var}, together with
\eqref{eq:s-PLG1} and
\eqref{eq:s-PLG2a} compare its central and tail parts with the discrete
Gaussian reference.  The discrete-reference tail in \eqref{eq:s-PLG2a}
allows the central sum to be extended to the full embedded lattice.
Equation \eqref{eq:s-PLG3a}, applied to \(m_\pi^Gw_\pi^G\), and
\eqref{eq:s-PLG3b} then compare the sum with the Gaussian integral.
Lemma~\ref{lem:s-gaussian-variation} gives
\eqref{eq:s-primitive-risk} and \eqref{eq:s-primitive-eta0}.

The posterior Dirichlet identity gives
\[
 \partial_vr^E_{n,v}(\pi)
 =\E_\pi^E\Gamma_{n,v}\mu_\pi^E(Z).
\]
On the central set, \eqref{eq:s-edge-energy} and
\eqref{eq:s-PLG4b}--\eqref{eq:s-PLG4c} imply
\begin{equation}
 \left|\Gamma_{n,v}\mu_\pi^E(z)
 -v^{-2}w_\pi^E(z)^2\right|
 \leq C\{H^4\gamma_n(H)+\chi_n(H)\}.
 \label{eq:s-central-energy}
\end{equation}
 Equation \eqref{eq:s-central-energy} controls the central contribution, and
the tail contribution is bounded by \eqref{eq:s-PLG2b}.
Equation \eqref{eq:s-post-var} compares the central posterior variances,
and the predictive likelihood comparison together with
\eqref{eq:s-PLG3a}--\eqref{eq:s-PLG3b} compares the resulting discrete
Gaussian sum with
\(\E_\pi^G\{w_\pi^G(Y)/v\}^2\).
Lemma~\ref{lem:s-gaussian-variation} now gives
\eqref{eq:s-primitive-derivative} and
\eqref{eq:s-primitive-eta1}.  Nonnegativity in
\eqref{eq:s-primitive-GT1} follows from the Dirichlet identity; its upper
bound follows from \eqref{eq:s-primitive-derivative} and
\((w_\pi^G/v)^2\leq C_{\cK}H^4\).  No bound on the pointwise jump rate is
used.
\end{proof}

\begin{lemma}[Monotone-tilt secant slope]
\label{lem:s-monotone-tilt}
Let \(P_1\) and \(P_2\) be posterior laws obtained from a common base law by
likelihood weights whose ratio is monotone in \(h\).  For a differentiable
target \(g\), whenever \(\E_2h-\E_1h\neq0\),
\[
 \frac{\E_2g-\E_1g}{\E_2h-\E_1h}
\]
lies between the smallest and largest secant slopes of \(g\) on
\([-H,H]\).  If the denominator is zero, the numerator is also zero.
\end{lemma}

\begin{proof}
Let \(H_1,H_2\) be independent draws from the common base law, and let
\(r\) be the ratio of the two likelihood weights.  Up to the same positive
normalizing factor, the numerator and denominator equal expectations of
\[
 \{g(H_1)-g(H_2)\}\{r(H_1)-r(H_2)\}
\quad\text{and}\quad
 (H_1-H_2)\{r(H_1)-r(H_2)\}.
\]
After multiplication by the monotonicity sign, the denominator weights are
nonnegative.  The ratio is therefore a convex average of secant slopes.  If
the denominator is zero, the nonnegative integrand vanishes almost surely,
which also makes the numerator zero.
\end{proof}

\begin{corollary}[Primitive smooth-target transfer]
\label{cor:s-primitive-ST}
Suppose the active edge likelihood ratios in PLG4 are monotone on
\([-H,H]\), and
\[
 g_n(h)=bh+\rho_n(h),\qquad
 \sup_{|h|\leq H}|\rho_n(h)|\leq CH^2/\sqrt n,\qquad
 \sup_{|h|\leq H}|\rho_n'(h)|\leq CH/\sqrt n.
\]
Then the derivative energies for \(g_n\) and \(h\) differ by the factors
\[
 \{|b|-CH/\sqrt n\}_+^2
 \quad\text{and}\quad
 \{|b|+CH/\sqrt n\}^2.
\]
The corresponding ordinary Bayes-risk comparison follows from conditional
variance and Cauchy--Schwarz.  Hence the smooth-target transfer assumption
follows from monotone likelihood edges and target smoothness.
\end{corollary}

\begin{proof}
Apply Lemma~\ref{lem:s-monotone-tilt} to every edge in the posterior
Dirichlet form.  Every secant slope of \(g_n\) lies between
\(b-CH/\sqrt n\) and \(b+CH/\sqrt n\).  Squaring the adjacent posterior-mean
increments and summing with the nonnegative jump rates gives the derivative
comparison.  For ordinary Bayes risk, write
\(g_n(h)=bh+\rho_n(h)\) inside the conditional variance and use
\(\Var\{\rho_n(h)\mid Y\}\leq C H^4/n\).
\end{proof}

\subsection{Primitive verification for the original models}
\label{sec:s-primitive-original}

For Gaussian degradation, the experiment is the reference experiment.  The
heat carré du champ is \((\partial_y\mu_\pi^G)^2\), and Gaussian posterior
differentiation gives \(\partial_y\mu_\pi^G=w_\pi^G/v\).  Thus the risk and
derivative comparisons are exact, with zero transfer error.

For binary attenuation, use the sufficient plus count and its birth--death
degradation.  Equations \eqref{eq:s-bin-lr}--\eqref{eq:s-bin-tail} give
PLG1--PLG3 with
\[
 \eps_n(H)\leq CH^3/\sqrt n,\qquad
 \kappa_n\leq C/\sqrt n,\qquad
 \tau_n(H)\leq Ce^{-cH^2},
\]
with an exponentially small uncovered Gaussian tail.  Put
\[
 p_h=\{1+c(t_0+h/\sqrt n)\}/2,\qquad
 x=c^{-2},\qquad v=x-t_0^2.
\]
In binary-channel time \(s\), the count generator has downward and upward
rates \(j\) and \(n-j\).  Since \(\dd v/\dd s=4x\), the total \(v\)-rate is
\(n/(4x)\), and the two edge slopes have common magnitude
\[
 |\alpha|=\frac{2c}{\sqrt n\{1-c^2t_0^2\}}.
\]
The edge remainders are bounded by \(CH^2/n\), and
\[
 \frac{n}{4x}
 \frac{4c^2}{n\{1-c^2t_0^2\}^2}
 =v^{-2}.
\]
Thus the calibration error is zero.  Remark~\ref{rem:s-regular-PLG} gives
\(\chi_n(H)\leq C\{H^5/\sqrt n+H^6/n\}\).  Moreover,
\(G_{n,v,H}\leq CH^4\), so the binomial tail bound gives
\(\tau_n^\Gamma(H)\leq CH^4e^{-cH^2}\).

For Poisson thinning, the likelihood expansion
\eqref{eq:s-pois-lr}, the local limit \eqref{eq:s-pois-local-limit}, and the
half-lattice comparison \eqref{eq:s-half-lattice} give PLG1--PLG3 with the
same orders.  The sufficient total count has pure-death generator
\[
 q_v(j,j-1)=\frac{j}{x\theta_0},
\]
and the adjacent likelihood ratio has
\[
 \alpha=-\frac{1}{\sqrt n\,\theta_0},\qquad
 \omega_e(H)\leq CH^2/n.
\]
At \(j_0=n\theta_0/x\),
\[
 q_v(j_0,j_0-1)\alpha^2=v^{-2},\qquad v=x\theta_0,
\]
while \(j=j_0+\sqrt n\,y/x\) gives a calibration error
\(O(H/\sqrt n)\) on the central set.  Finally,
\(G_{n,v,H}(j)\leq CH^4j/n\).  The bounded second moment of \(J/n\) and
the Poisson Chernoff bound give
\(\tau_n^\Gamma(H)\leq CH^4e^{-cH^2}\).  Substitution in
Theorem~\ref{thm:s-primitive-transfer} recovers
\eqref{eq:s-binary-risk}--\eqref{eq:s-binary-slope} and
\eqref{eq:s-poisson-risk}--\eqref{eq:s-poisson-slope}.

\subsection{Verification for negative-binomial thinning}
\label{sec:s-negative-binomial}

Fix an overdispersion parameter \(r>0\).  Let \(X_1,\ldots,X_n\) be
independent negative-binomial observations with shape \(r\) and mean
\(\theta_h/x\), where
\[
 \theta_h=\theta_0+h/\sqrt n,\qquad x\geq1.
\]
This family has probability generating function
\[
 \left\{\frac{xr}{xr+\theta_h(1-z)}\right\}^r.
\]
Binomial thinning from \(x_1\) to \(x_2>x_1\) uses the
parameter-free retention probability \(x_1/x_2\); this standard closure
property underlies negative-binomial thinning \citep{ZhuJoe2010}.
The sufficient total count \(J=\sum_iX_i\) has mass
\begin{equation}
 p_{n,x,h}(j)
 =\frac{\Gamma(nr+j)}{\Gamma(nr)j!}
  \left(\frac{xr}{xr+\theta_h}\right)^{nr}
  \left(\frac{\theta_h}{xr+\theta_h}\right)^j.
 \label{eq:s-NB-mass}
\end{equation}
The normalized statistic
\begin{equation}
 T_{n,x}(J)=\frac{xJ-n\theta_0}{\sqrt n}
 \label{eq:s-NB-statistic}
\end{equation}
has mean \(h\), and at \(h=0\) its variance is
\begin{equation}
 v(x)=x\theta_0+\theta_0^2/r,
 \label{eq:s-NB-v}
\end{equation}
the inverse Fisher information for \(\theta\).

\begin{proposition}[Negative-binomial primitive transfer]
\label{prop:s-negative-binomial-transfer}
Let \(\theta_0\), \(r\), and \(x\) range over compact subsets of
\((0,\infty)\), with \(x\geq1\).  If \(1\leq H=o(n^{1/6})\), then the
negative-binomial thinning experiment satisfies PLG1--PLG4 with
\begin{align}
 \eps_n(H)&\leq CH^3/\sqrt n,&
 \kappa_n&\leq C/\sqrt n,&
 \tau_n(H)&\leq Ce^{-cH^2},\notag\\
 b_n(H)&\leq Ce^{-cn},&
 \gamma_n(H)&\leq CH/\sqrt n,&
 \chi_n(H)&\leq C\{H^5/\sqrt n+H^6/n\},\notag\\
 \tau_n^\Gamma(H)&\leq CH^4e^{-cH^2}.&&&
 \label{eq:s-NB-PLG-rates}
\end{align}
Consequently, uniformly over all priors on \([-H,H]\),
\begin{align}
 |r^{NB}_{n,v}(\pi)-r_v^G(\pi)|
 &\leq CH^5/\sqrt n+CH^2e^{-cH^2},                       \label{eq:s-NB-risk}\\
 \left|\partial_vr^{NB}_{n,v}(\pi)
 -\E_\pi^G\{w_\pi^G(Y)/v\}^2\right|
 &\leq CH^7/\sqrt n+CH^4e^{-cH^2}.                       \label{eq:s-NB-derivative}
\end{align}
\end{proposition}

\begin{proof}
At the statistic in \eqref{eq:s-NB-statistic}, with
\(T_{n,x}(J)=y\) and \(v\) as in \eqref{eq:s-NB-v}, differentiation of
\eqref{eq:s-NB-mass} gives
\[
 \left.\partial_h\log p_{n,x,h}(J)\right|_{h=0}=y/v.
\]
Two further derivatives, uniformly on the stated compact parameter sets,
give
\begin{equation}
 \log\frac{p_{n,x,h}(J)}{p_{n,x,0}(J)}
 =\frac{hy}{v}-\frac{h^2}{2v}+R_n(h,y),\qquad
 |R_n(h,y)|
 \leq C\frac{|h|^2|y|+|h|^3}{\sqrt n}.
 \label{eq:s-NB-likelihood}
\end{equation}

For the local limit, write \(j=nt\), \(t_0=\theta_0/x\), and
\[
 F(t)=(r+t)\log(r+t)-r\log r-t\log t
 +r\log\frac{xr}{xr+\theta_0}
 +t\log\frac{\theta_0}{xr+\theta_0}.
\]
Stirling expansion of the three gamma factors in
\eqref{eq:s-NB-mass} gives \(nF(t)\), a smooth logarithmic prefactor, and a
uniform \(O(n^{-1})\) remainder near \(t_0\).  Direct differentiation gives
\[
 F'(t_0)=0,\qquad F''(t_0)=-x^2/v,\qquad
 \sup_{|t-t_0|\leq\delta}|F'''(t)|\leq C.
\]
With \(t=t_0+y/(x\sqrt n)\), the lattice spacing is
\(a_n=x/\sqrt n\), and
\begin{equation}
 p_{n,x,0}(J)
 =a_n\phi_v(y)
 \left[1+O\{(1+|y|^3)/\sqrt n\}\right]
 \label{eq:s-NB-local-limit}
\end{equation}
uniformly for \(|y|=o(n^{1/6})\).  The prefactor at \(t_0\) is
\(a_n/\sqrt{2\pi v}\); its logarithm changes by
\(O\{(1+|y|)/\sqrt n\}\), while the cubic Taylor term is
\(O(|y|^3/\sqrt n)\).  Combining
\eqref{eq:s-NB-likelihood}--\eqref{eq:s-NB-local-limit} with the Gaussian
likelihood ratio proves PLG1.

The negative-binomial moment generating function is finite in a common
neighborhood of zero.  A uniform Chernoff bound therefore gives
\begin{equation}
 \sup_{|h|\leq H}
 \Pp_{n,x,h}\{|T_{n,x}|>C_0H\}
 \leq Ce^{-cH^2}
 \label{eq:s-NB-tail}
\end{equation}
for fixed \(C_0>1\) and \(H=o(\sqrt n)\).  The half-lattice has lower
endpoint \(-\theta_0\sqrt n\) and spacing \(a_n=x/\sqrt n\).
The left cells cover
\([-\theta_0\sqrt n,\infty)\), so PLG3 holds with
\(\kappa_n\leq C/\sqrt n\) and \(b_n(H)\leq Ce^{-cn}\).
The same rectangle bound and Gaussian tail estimate establish the discrete
reference-tail term in PLG2.

Put \(s=\log x\).  Increasing \(s\) is independent thinning, and the total
count has pure-death generator \(q_s(j,j-1)=j\).  Since
\[
 \frac{\dd v}{\dd s}=x\theta_0,\qquad
 q_v(j,j-1)=\frac{j}{x\theta_0},
\]
the edge log-likelihood ratio has \(h\)-dependent part
\[
 \log(xr+\theta_h)-\log\theta_h.
\]
Consequently,
\begin{equation}
 \alpha
 =-\frac{xr}{\sqrt n\,\theta_0(xr+\theta_0)},\qquad
 \omega_e(H)\leq CH^2/n.
 \label{eq:s-NB-edge}
\end{equation}
At the central count \(j_0=n\theta_0/x\),
\begin{equation}
 q_v(j_0,j_0-1)\alpha^2
 =\frac{r^2}{\theta_0^2(xr+\theta_0)^2}
 =v^{-2}.
 \label{eq:s-NB-calibration}
\end{equation}
For \(|y|\leq C_0H\), substituting
\(j=j_0+\sqrt n\,y/x\) changes the left side of
\eqref{eq:s-NB-calibration} by \(O(H/\sqrt n)\).
Equations \eqref{eq:s-NB-edge}--\eqref{eq:s-NB-calibration} and
Remark~\ref{rem:s-regular-PLG} give the stated \(\gamma_n\) and \(\chi_n\)
rates.

It remains to control the energy tail because the pure-death rate is
unbounded.  The edge log-likelihood oscillation is \(O(H/\sqrt n)\), and
\[
 G_{n,v,H}(j)\leq CH^4j/n.
\]
The second moment of \(J/n\) is uniformly bounded.  Cauchy--Schwarz and
\eqref{eq:s-NB-tail}, with a change in \(c\), give
\[
 \sup_{|h|\leq H}\E_{n,x,h}
 \{\ind_{\mathcal C_{n,H}^c}G_{n,v,H}(J)\}
 \leq CH^4e^{-cH^2}.
\]
This completes PLG2--PLG4.  Substitution of
\eqref{eq:s-NB-PLG-rates} into
\eqref{eq:s-primitive-eta0}--\eqref{eq:s-primitive-eta1} proves
\eqref{eq:s-NB-risk}--\eqref{eq:s-NB-derivative}.  The edge likelihood
ratio in \eqref{eq:s-NB-edge} is monotone in \(h\), so
Corollary~\ref{cor:s-primitive-ST} also gives the smooth-target transfer.
\end{proof}

\section{Smooth-target slope transfer}
\label{sec:s-smooth}

Let
\[
 g_n(h)=\sqrt n\{\psi(\theta_0+h/\sqrt n)-\psi(\theta_0)\},
 \qquad b=\dot\psi(\theta_0),
\]
where \(\psi\) has a bounded second derivative on a fixed neighborhood of
\(\theta_0\).  Every secant slope of \(g_n\) on \([-H,H]\) then lies between
\begin{equation}
 b-CH/\sqrt n\quad\text{and}\quad b+CH/\sqrt n.
 \label{eq:s-secant-range}
\end{equation}

\begin{lemma}[Posterior secant slopes]
\label{lem:s-secant}
Let \(H_1,H_2\) be conditionally independent draws from a posterior
distribution on \([-H,H]\).  If \(\Var(H_1\mid Y)>0\), then
\begin{equation}
 \frac{\Cov\{g_n(H_1),H_1\mid Y\}}
      {\Var(H_1\mid Y)}
 =
 \frac{\E[\{g_n(H_1)-g_n(H_2)\}(H_1-H_2)\mid Y]}
      {\E[(H_1-H_2)^2\mid Y]}.
 \label{eq:s-secant-identity}
\end{equation}
The right-hand side is a weighted average of secant slopes and therefore lies
in the interval in \eqref{eq:s-secant-range}.  If the posterior variance is
zero, all conditional differences in \eqref{eq:s-secant-identity} vanish.
\end{lemma}

\begin{proof}
Conditional symmetrization gives
\[
 2\Cov\{g_n(H_1),H_1\mid Y\}
 =\E[\{g_n(H_1)-g_n(H_2)\}(H_1-H_2)\mid Y],
\]
and the same identity with \(g_n(h)=h\) gives twice the conditional variance.
After writing the numerator as a secant slope times \((H_1-H_2)^2\), the
weights are nonnegative and sum to one.
\end{proof}

\begin{proposition}[Derivative comparison for smooth targets]
\label{prop:s-smooth-transfer}
In each of the Gaussian, binary, Poisson, and negative-binomial experiments,
for every prior,
\begin{equation}
 \{\,|b|-CH/\sqrt n\,\}_+^2\,\partial_vr^h_{n,v}(\pi)
 \leq \partial_vr^{g_n}_{n,v}(\pi)
 \leq \{\,|b|+CH/\sqrt n\,\}^2\,\partial_vr^h_{n,v}(\pi).
 \label{eq:s-smooth-derivative}
\end{equation}
Moreover,
\begin{equation}
 |r^{g_n}_{n,v}(\pi)-b^2r^h_{n,v}(\pi)|
 \leq C\{H^3/\sqrt n+H^4/n\}.
 \label{eq:s-smooth-risk}
\end{equation}
Together with \eqref{eq:s-minimax}, this proves the smooth-target transfer
assumption in the main paper for all four models.
\end{proposition}

\begin{proof}
For the Gaussian experiment, direct differentiation of the posterior
integrals gives, for every square-integrable target \(g\),
\begin{equation}
 \partial_vr^g_v(\pi)
 =v^{-2}\E_\pi\Cov\{g(h),h\mid Y\}^2.
 \label{eq:s-gaussian-general-derivative}
\end{equation}
Apply Lemma~\ref{lem:s-secant} to compare the conditional covariance in
\eqref{eq:s-gaussian-general-derivative} with the posterior variance.

For the binary experiment, the hypercube Dirichlet form
\eqref{eq:s-hypercube-dirichlet} remains valid when the posterior mean of
\(h\) is replaced by that of \(g_n(h)\).  Conditional on \(n-1\)
observations, appending one observation tilts the posterior by the affine
function
\[
 p(h)=\{1+c(t_0+h/\sqrt n)\}/2.
\]
When the conditional variance is nonzero, the ratio of the adjacent
posterior-mean differences is
\[
 \frac{\Cov\{g_n(h),p(h)\mid J_-\}}
      {\Cov\{h,p(h)\mid J_-\}}
 =
 \frac{\Cov\{g_n(h),h\mid J_-\}}
      {\Var(h\mid J_-)}.
\]
Lemma~\ref{lem:s-secant} bounds this ratio.  If the variance is zero, both
adjacent differences vanish.  Substitution into the same nonnegative
Dirichlet sum gives \eqref{eq:s-smooth-derivative}.

For Poisson thinning, the pure-death calculation leading to
\eqref{eq:s-poisson-dirichlet} gives, for a general target,
\[
 \partial_vr^g_{n,v}(\pi)
 =v^{-1}\sum_{j\geq1}jM_j(\nu_j-\nu_{j-1})^2,
 \qquad \nu_j=\E\{g(h)\mid J=j\}.
\]
The posterior at \(j\) is the posterior at \(j-1\) tilted by the affine
function \(\Lambda_h=n\theta_0+\sqrt n\,h\).  The ratio of the target and
identity adjacent differences is again the conditional covariance divided
by the conditional variance.  Lemma~\ref{lem:s-secant} completes the
derivative comparison.

For negative-binomial thinning, adjacent total-count posteriors are related
by the monotone edge likelihood ratio in \eqref{eq:s-NB-edge}.  Applying
Lemma~\ref{lem:s-monotone-tilt} in the pure-death Dirichlet form gives the
same derivative comparison, as recorded in
Corollary~\ref{cor:s-primitive-ST}.

Finally, Taylor's theorem gives
\[
 g_n(h)=bh+\rho_n(h),\qquad
 \sup_{|h|\leq H}|\rho_n(h)|\leq CH^2/\sqrt n.
\]
Expanding the conditional variance and applying Cauchy--Schwarz yields
\eqref{eq:s-smooth-risk}.
\end{proof}

\begin{remark}
The least-favorable priors for \(h\) and for \(g_n(h)\) need not coincide.
Equation \eqref{eq:s-smooth-risk} makes a least-favorable prior for the smooth
target \(O(H^3/\sqrt n+H^4/n)\)-least favorable for the identity target,
after division by \(b^2\).  Under the slow radius used in the main paper,
multiplication of this certificate error by \(H^6\) tends to zero.  Hence the
Gaussian posterior-variance certificate applies to the actual endpoint
priors of the smooth-target minimax problem.
\end{remark}

\section{Coupled radial-qubit baseline}
\label{sec:s-qubit-baseline}

We prove the coupled baseline theorem in the main paper.  For an equatorial
size-\(k\) menu let
\begin{align*}
 s(t)&=-\frac{1+t}{2}\log\frac{1+t}{2}
       -\frac{1-t}{2}\log\frac{1-t}{2},\\
 c_k&=\cos\{\pi/(2k)\},\qquad
 B_c(t_0)=s'(t_0)^2(c^{-2}-t_0^2).
\end{align*}
Write
\[
 R_{n,H}^{c}(t_0,s)
 =\inf_T\sup_{|h|\leq H}
 \E_{t_0+h/\sqrt n,c}
 \left[T-\sqrt n\{s(t_0+h/\sqrt n)-s(t_0)\}\right]^2.
\]
Let \(\mathfrak M_{n,k}(t_0;H)\) be the corresponding worst-context
menu minimax risk and let
\(\mathfrak R_{n,\star}(t_0;H)=R_{n,H}^{1}(t_0,s)\).
Thus the equatorial frontier in the main paper is
\(F_{n,k}(H)=\mathfrak M_{n,k}(t_0;H)
             -\mathfrak R_{n,\star}(t_0;H)\).
The estimator and van Trees ingredients are standard.  We restate the
fixed-menu qubit calculation needed below so that the proof is self-contained.
What must be shown is that the constants remain uniform over the triangular
array of menu sizes and that the menu and oracle risks can be coupled before
subtraction.
Equally spaced projective axes have worst overlap \(c_k\).  Conversely, every
set of at most \(k\) ambient projective axes has an equatorial context whose
largest absolute overlap is at most \(c_k\): discard axes with zero equatorial
projection, project the others to the circle, and take a midpoint of a largest
projective gap.  Since \(c_k\geq c_2=1/\sqrt2\), all constants below are
uniform in \(k\).

For a fixed overlap \(c\in[c_2,1]\), write
\[
 J\sim\operatorname{Bin}\{n,(1+ct)/2\},\qquad
 t=t_0+h/\sqrt n,
\]
and let
\[
 \widetilde t=(2J/n-1)/c,\qquad
 \widehat t=\operatorname{proj}_{[\tau/2,\,1-\tau/2]}(\widetilde t).
\]
The estimator
\(\sqrt n\{s(\widehat t)-s(t_0)\}\) is admissible for the local target.
Uniformly over \(|h|\leq H\), exact centered binomial moments and Taylor's
theorem on the clipping interval give
\begin{equation}
 \sup_{|h|\leq H}
 \E\left[
  \sqrt n\{s(\widehat t)-s(t)\}
 \right]^2
 \leq B_c(t_0)
 +C_\tau\{H/\sqrt n+n^{-1}+ne^{-\gamma_\tau n}\}.
 \label{eq:s-baseline-upper}
\end{equation}
Indeed, before replacing \(t\) by \(t_0\), the leading term is
\[
 s'(t)^2(c^{-2}-t^2),
\]
and the clipping probability is at most
\(2\exp(-nc^2\tau^2/32)\).

For the lower bound, use on \([-H,H]\) the rescaled cosine-squared prior.
Its prior Fisher information is \(\pi^2/H^2\).  The van Trees inequality
\citep{GillLevit1995},
uniform continuity of \(s'\), and the binary information
\(I_c(t)=c^2/(1-c^2t^2)\) yield
\begin{equation}
 R_{n,H}^{c}(t_0,s)
 \geq B_c(t_0)-C_\tau\{H/\sqrt n+H^{-2}\}.
 \label{eq:s-baseline-lower}
\end{equation}
Both displays hold at \(c=1\) as well.

Set \(H=n^{1/6}\).  For the equally spaced menu,
\eqref{eq:s-baseline-upper} and monotonicity in the overlap give
\[
 \mathfrak M_{n,k}(t_0;H)
 \leq B_{c_k}(t_0)+C_\tau n^{-1/3}.
\]
For an arbitrary menu, choose the largest-gap context described above.
Its experiment is a garbling of the coefficient-\(c_k\) experiment, so
Blackwell monotonicity and \eqref{eq:s-baseline-lower} give the reverse
inequality with \(-C_\tau n^{-1/3}\).  The same upper and lower calculations
at \(c=1\) give the oracle risk
\(B_1(t_0)+O_\tau(n^{-1/3})\).  Subtraction and
\[
 B_{c_k}(t_0)-B_1(t_0)
 =s'(t_0)^2(c_k^{-2}-1)
 =\operatorname{artanh}^2(t_0)\tan^2\{\pi/(2k)\}
\]
prove the baseline theorem.  Finally,
\(\tan^2\{\pi/(2k)\}\asymp k^{-2}\), so the absolute remainder is
little-oh of the frontier when \(k=o(n^{1/6})\).

\bibliographystyle{imsart-nameyear}
\bibliography{refs}